\documentclass[11pt,letterpaper]{amsart}

\usepackage[a4paper,left=3cm,right=3cm,top=3cm,bottom=2.5cm]{geometry}
\usepackage{comment}
\usepackage{xcolor}
\usepackage{amsmath}
\usepackage{mathtools}
\usepackage{booktabs}
\usepackage[all]{xy}
\usepackage[utf8]{inputenc}
\usepackage{varioref}
\usepackage{amsfonts}
\usepackage{amsmath}
\usepackage{amssymb}
\usepackage{mathabx}
\usepackage{bbm}
\usepackage{esint}
\usepackage{graphicx}
\usepackage{tikz}
\usepackage{empheq}
\usepackage[shortlabels]{enumitem}
\usepackage{tikz-cd}
\usepackage{todonotes}
\usepackage{bigints}
\usepackage{amsmath}
\usepackage{amsfonts}
\usepackage{amsthm}
\usepackage{lipsum}
\usepackage{amssymb}
\usepackage[utf8]{inputenc}
\usepackage[english]{babel}
\usepackage{pst-node}
\usepackage{tikz-cd} 
\usepackage{hyperref} 
\usepackage{mathtools}
\usepackage{xcolor}
\usepackage{centernot}
\usepackage{fancyhdr}
\usepackage[T1]{fontenc}
\usepackage{mathabx,graphicx}
\usepackage[english]{babel}
\usetikzlibrary{matrix,arrows,decorations.pathmorphing}
\usepackage{mathrsfs}

\newcommand{\sH}{{\mathcal H}}

\newcommand{\sL}{{\mathcal L}}

\newcommand{\sV}{{\mathcal V}}

\newcommand{\C}{{\mathbb C}}

\renewcommand{\O}{{\rm O}}

\renewcommand{\to}[1][]{\xrightarrow{\ #1\ }}

\makeatletter
\newcommand*{\da@rightarrow}{\mathchar"0\hexnumber@\symAMSa 4B }
\newcommand*{\da@leftarrow}{\mathchar"0\hexnumber@\symAMSa 4C }
\newcommand*{\xdashrightarrow}[2][]{%
  \mathrel{%
    \mathpalette{\da@xarrow{#1}{#2}{}\da@rightarrow{\,}{}}{}%
  }%
}
\newcommand{\xdashleftarrow}[2][]{%
  \mathrel{%
    \mathpalette{\da@xarrow{#1}{#2}\da@leftarrow{}{}{\,}}{}%
  }%
}
\newcommand*{\da@xarrow}[7]{%
  \sbox0{$\ifx#7\scriptstyle\scriptscriptstyle\else\scriptstyle\fi#5#1#6\m@th$}%
  \sbox2{$\ifx#7\scriptstyle\scriptscriptstyle\else\scriptstyle\fi#5#2#6\m@th$}%
  \sbox4{$#7\dabar@\m@th$}%
  \dimen@=\wd0 %
  \ifdim\wd2 >\dimen@
    \dimen@=\wd2 %
  \fi
  \count@=2 %
  \def\da@bars{\dabar@\dabar@}%
  \@whiledim\count@\wd4<\dimen@\do{%
    \advance\count@\@ne
    \expandafter\def\expandafter\da@bars\expandafter{%
      \da@bars
      \dabar@ 
    }%
  }%
  \mathrel{#3}%
  \mathrel{%
    \mathop{\da@bars}\limits
    \ifx\\#1\\%
    \else
      _{\copy0}%
    \fi
    \ifx\\#2\\%
    \else
      ^{\copy2}%
    \fi
  }%
  \mathrel{#4}%
}
\makeatother

\makeatletter
\newsavebox\myboxA
\newsavebox\myboxB
\newlength\mylenA

\newcommand*\xtilde[2][0.8]{%
    \sbox{\myboxA}{$\m@th#2$}%
    \setbox\myboxB\null
    \ht\myboxB=\ht\myboxA%
    \dp\myboxB=\dp\myboxA%
    \wd\myboxB=#1\wd\myboxA
    \sbox\myboxB{$\m@th\widetilde{\copy\myboxB}$}
    \setlength\mylenA{\the\wd\myboxA}
    \addtolength\mylenA{-\the\wd\myboxB}%
    \ifdim\wd\myboxB<\wd\myboxA%
       \rlap{\hskip 0.5\mylenA\usebox\myboxB}{\usebox\myboxA}%
    \else
        \hskip -0.5\mylenA\rlap{\usebox\myboxA}{\hskip 0.5\mylenA\usebox\myboxB}%
    \fi}

\newbox\usefulbox

\def\getslant #1{\strip@pt\fontdimen1 #1}

\def\xxtilde #1{\mathchoice
 {{\setbox\usefulbox=\hbox{$\m@th\displaystyle #1$}%
    \dimen@ \getslant\the\textfont\symletters \ht\usefulbox
    \divide\dimen@ \tw@ 
    \kern\dimen@ 
    \xtilde{\kern-\dimen@ \box\usefulbox\kern\dimen@ }\kern-\dimen@ }}
 {{\setbox\usefulbox=\hbox{$\m@th\textstyle #1$}%
    \dimen@ \getslant\the\textfont\symletters \ht\usefulbox
    \divide\dimen@ \tw@ 
    \kern\dimen@ 
    \xtilde{\kern-\dimen@ \box\usefulbox\kern\dimen@ }\kern-\dimen@ }}
 {{\setbox\usefulbox=\hbox{$\m@th\scriptstyle #1$}%
    \dimen@ \getslant\the\scriptfont\symletters \ht\usefulbox
    \divide\dimen@ \tw@ 
    \kern\dimen@ 
    \xtilde{\kern-\dimen@ \box\usefulbox\kern\dimen@ }\kern-\dimen@ }}
 {{\setbox\usefulbox=\hbox{$\m@th\scriptscriptstyle #1$}%
    \dimen@ \getslant\the\scriptscriptfont\symletters \ht\usefulbox
    \divide\dimen@ \tw@ 
    \kern\dimen@ 
    \xtilde{\kern-\dimen@ \box\usefulbox\kern\dimen@ }\kern-\dimen@ }}%
 {}}

\newcommand*\xoverline[2][0.75]{%
    \sbox{\myboxA}{$\m@th#2$}%
    \setbox\myboxB\null
    \ht\myboxB=\ht\myboxA%
    \dp\myboxB=\dp\myboxA%
    \wd\myboxB=#1\wd\myboxA
    \sbox\myboxB{$\m@th\overline{\copy\myboxB}$}
    \setlength\mylenA{\the\wd\myboxA}
    \addtolength\mylenA{-\the\wd\myboxB}%
    \ifdim\wd\myboxB<\wd\myboxA%
       \rlap{\hskip 0.5\mylenA\usebox\myboxB}{\usebox\myboxA}%
    \else
        \hskip -0.5\mylenA\rlap{\usebox\myboxA}{\hskip 0.5\mylenA\usebox\myboxB}%
    \fi}

\def\xxoverline #1{\mathchoice
 {{\setbox\usefulbox=\hbox{$\m@th\displaystyle #1$}%
    \dimen@ \getslant\the\textfont\symletters \ht\usefulbox
    \divide\dimen@ \tw@ 
    \kern\dimen@ 
    \overline{\kern-\dimen@ \box\usefulbox\kern\dimen@ }\kern-\dimen@ }}
 {{\setbox\usefulbox=\hbox{$\m@th\textstyle #1$}%
    \dimen@ \getslant\the\textfont\symletters \ht\usefulbox
    \divide\dimen@ \tw@ 
    \kern\dimen@ 
    \xoverline{\kern-\dimen@ \box\usefulbox\kern\dimen@ }\kern-\dimen@ }}
 {{\setbox\usefulbox=\hbox{$\m@th\scriptstyle #1$}%
    \dimen@ \getslant\the\scriptfont\symletters \ht\usefulbox
    \divide\dimen@ \tw@ 
    \kern\dimen@ 
    \xoverline{\kern-\dimen@ \box\usefulbox\kern\dimen@ }\kern-\dimen@ }}
 {{\setbox\usefulbox=\hbox{$\m@th\scriptscriptstyle #1$}%
    \dimen@ \getslant\the\scriptscriptfont\symletters \ht\usefulbox
    \divide\dimen@ \tw@ 
    \kern\dimen@ 
    \xoverline{\kern-\dimen@ \box\usefulbox\kern\dimen@ }\kern-\dimen@ }}%
 {}}
\makeatother

\makeatletter
\newcommand{\mylabel}[2]{#2\def\@currentlabel{#2}\label{#1}}
\makeatother

\makeatletter
\newcommand{\Mac}{}
\DeclareRobustCommand{\Mac}{%
  M%
  \raisebox{\dimexpr\fontcharht\font`M-\height}{%
    \check@mathfonts\fontsize{\sf@size}{0}\selectfont
    c%
  }%
}
\makeatother

\newtheoremstyle{citing}
  {}
  {}
  {\itshape}
  {}
  {\bfseries}
  {\textbf{.}}
  {.5em}
  {\thmnote{#3}}

\theoremstyle{plain}
\newtheorem{theorem}{Theorem}[section]

\newtheorem{lemma}[theorem]{Lemma}
\newtheorem{corollary}[theorem]{Corollary}

\newtheorem{bigthm}{Theorem}
\newtheorem{bigcor}[bigthm]{Corollary}

\theoremstyle{remark}

\theoremstyle{definition}

\newtheorem{definition}[theorem]{Definition}

\numberwithin{equation}{section}

\theoremstyle{remark}
\newtheorem{remark}[theorem]{Remark}

{\theoremstyle{citing}
}

{\theoremstyle{definition}
}

\title[On the Rigidity of Hamiltonians which are Zoll Near a Minimum]{On the Rigidity of Hamiltonians which are Zoll Near a Minimum, with an Application to Magnetic Systems and Almost-Kähler Manifolds}

\author{Gabriele Benedetti, Johanna Bimmermann, Samanyu Sanjay}
\address{International School for Advanced Studies (SISSA), Via Bonomea 265, 34136, Trieste, Italy}
\email{gabriele.benedetti@sissa.it}

\address{Mathematical Institute, University of Oxford, Andrew Wiles Building, Woodstock Road, OX2 6GG, U.K.}
\email{johanna.bimmermann@maths.ox.ac.uk}

\address{RWTH Aachen, Lehrstuhl für Geometrie und Analysis, Pontdriesch 10-12,52062 Aachen}
\email{sajay@mathga.rwth-aachen.de}

\let\origmaketitle\maketitle
\def\maketitle{
  \begingroup
  \def\uppercasenonmath##1{} 
  \let\MakeUppercase\relax 
  \origmaketitle
  \endgroup
}

\makeatletter
\newcommand{\myabstract}[1]{\gdef\@myabstract{#1}}
\gdef\@myabstract{}
\let\oldmaketitle\maketitle
\renewcommand{\maketitle}{%
  \oldmaketitle
  \ifx\@myabstract\@empty\else
    \begin{center}
      \begin{minipage}{0.85\textwidth}
        \small\textbf{Abstract. }\@myabstract
      \end{minipage}
    \end{center}
    \vspace{1em}
  \fi
}
\makeatother

\begin{document}
\thispagestyle{empty}
\noindent\begin{abstract}
We study Hamiltonian systems near a compact symplectic Morse–Bott minimum. Our first result shows that if the flow is Zoll (that is, it induces a free circle action) along a sequence of energy levels converging to the minimum, then the Hessian of the Hamiltonian in the symplectic normal directions must be compatible with the restriction of the symplectic structure to the normal bundle (that is, its representing endomorphism is a complex structure of the symplectic normal bundle). For our second result, we specialize to magnetic systems on closed manifolds with symplectic magnetic form. In this setting, if the system is Zoll along a sequence of energy levels converging to the minimum, then the metric is compatible with the magnetic form and therefore defines an almost Kähler structure. We show that a natural curvature quantity, consisting of the holomorphic sectional curvature corrected by a term measuring the non-integrability of the almost complex structure, must be constant. In particular, we obtain a dynamical characterization of complex space forms among Kähler manifolds. Together, these results establish strong rigidity of systems which are Zoll at energies close to a Morse--Bott minimum, in the symplectic and in the magnetic settings.
\end{abstract}
\maketitle
\setlength{\parindent}{1em}
\setcounter{tocdepth}{1}

\tableofcontents

\section{Introduction}

The existence of periodic orbits near a symplectic Morse--Bott minimum has a long history. In the case where $Q$ is a point, the Weinstein--Moser theorem guarantees the existence of $k$ periodic orbits on every sufficiently low energy level \cite{Weinstein_Moser_Mo,Weinstein_Moser_W}. For a general symplectic minimum $Q$, sharper multiplicity results were later obtained, under global resonance assumptions on the eigenvalues of the linearized dynamics, by Ginzburg \cite{Gin87}, Ginzburg--Kerman \cite{GK}, and Kerman \cite{Kerman}, building on a theorem of Bottkol \cite{Bottkol}. In full generality, the existence of at least one periodic orbit on low energy levels was established using Floer-theoretic methods by Ginzburg--G\"urel \cite{Ginzburg_gurel} and Usher \cite{usher}; see also \cite{GK2,CGK,Schlenk,FF}. Motivated by this circle of ideas, in the present paper we study the emergence of \textit{global} periodic behavior at low energy levels near a symplectic Morse--Bott minimum, focusing on Hamiltonians that are Zoll along a sequence of energies converging to the minimum and proving strong rigidity results in terms of normal forms.

\subsection{The Symplectic Setting}\label{s:setting}
Let $(M,\omega)$ be a smooth symplectic manifold and let $H\colon M\to{} [0,\infty)$ be a smooth Hamiltonian function that reaches a Morse-Bott non-degenerate minimum at $0$ such that the submanifold $Q:= H^{-1}(0)$ is a non-empty, embedded, connected, closed submanifold of $M$. The Morse-Bott non-degeneracy of $H$ at zero implies that the Hessian of the Hamiltonian function $H$ at $Q$ yields a positive-definite inner product $\gamma$ on a normal bundle $\pi\colon E\to Q$ to $Q$ inside $M$. On $E$, $\gamma$ has the following coordinate expression:
\begin{equation}
\gamma_q(u,v):=\frac{\partial^2 H}{\partial u\partial v}(q),\qquad \forall\,q\in Q,\ \forall\, u,v\in E_q.
\end{equation}
We will be interested in the periodicity properties of the Hamiltonian flow $\Phi_H$ of $H$ for small energies $\tfrac12\varepsilon^2$, when $Q$ is a symplectic submanifold of $M$. 

The Hamiltonian flow $\Phi_H$ is obtained by integrating the Hamiltonian vector field $X_H$ on $M$, the vector field $X_H$ is defined to be the unique solution to the following differential equation:
\begin{equation}
\omega(X_H,\cdot)=-dH.
\end{equation}
$\Phi_H$ preserves the Hamiltonian function $H$, also known as the energy, and is therefore complete near $Q$ since $Q$ is compact and $\gamma$ is positive-definite. Since we are only interested in the dynamics of $\Phi_H$ near $Q$, we can regard $M\subset E$ as a neighborhood of the zero section, which we identify with $Q$. Thus, it follows from the definition of $\gamma$, that the following holds:
\begin{equation}
H(q,v)=H^\gamma(q,v)+o(|v|^2),\qquad H^\gamma(q,v):=\tfrac{1}{2}\gamma_q(v,v),
\end{equation}
for an $\varepsilon>0$ sufficiently small, the energy level $\Sigma_\varepsilon:=H^{-1}(\tfrac12\varepsilon^2)$ is a closed hypersurface, and the restriction of $\pi$ yields a sphere bundle $\pi\colon \Sigma_\varepsilon\to Q$. 

When $Q$ is symplectic, we can take $E$ to be the symplectic orthogonal of $TQ$ in $TM|_Q$. For every $q\in Q$, we denote by $\sigma_q$ the restriction of the symplectic form $\omega_q$ to $T_qQ$ and by $\rho_q$ the restriction of $\omega_q$ to $E_q$. Therefore, $\pi\colon (E,\rho)\to (Q,\sigma)$ is a symplectic vector bundle (with form $\rho$) over a symplectic manifold (with form $\sigma$). We denote
\begin{equation}
2m:=\dim Q,\qquad 2k:=\mathrm{rk}(E\to Q)
\end{equation}
The linearized Hamiltonian dynamics of $H$ at $Q$ is given by
\begin{equation}
\Phi_H^t(q,v)=(q,e^{tA_q}v),\qquad\forall\,t\in\mathbb R,\ \forall\,(q,v)\in E,
\end{equation}
where $A_q\colon E_q\to E_q$ is uniquely defined by
\begin{equation}\label{def_of_A}
\rho_q(w,A_qv)=\gamma_q(w,v),\qquad\forall\,w\in E_q
\end{equation}
and belongs to the Lie algebra $\mathfrak{sp}(\rho_q)$ of the $\rho_q$-symplectic linear group.
Since multiplying $H$ by a positive constant only changes the time parametrization of the Hamiltonian, we will assume the normalization
\begin{equation}\label{normalization_of_A_morse-bott}
\int_Q\frac{1}{\det A}\, \sigma^m=\int_Q\sigma^m=:\mathrm{Vol}_\sigma(Q).
\end{equation}
We now focus on low-energy levels for which the Hamiltonian flow is globally periodic.
\begin{definition}
A regular energy level $\Sigma_\varepsilon$ is called Besse if, up to a global smooth time reparametrization, the Hamiltonian flow of $H$ on the energy level is an almost everywhere free circle action. If the action is free, then we call the energy level Zoll.
\end{definition}
\begin{definition}
    We say that $H$ is Besse along a sequence of energies converging to the minimum if there exists a sequence of positive numbers $(\varepsilon_n)$ converging to $0$ such that $\Sigma_{\varepsilon_n}$ is Besse. Similarly, we define Hamiltonians $H$ that are Zoll along a sequence of energies converging to the minimum.
\end{definition}
In our main results, we will give necessary conditions for a Hamiltonian $H$ to be Zoll along a sequence of energies converging to a Morse--Bott symplectic minimum $Q$: Theorem \ref{thmA} in the general setting and Theorem \ref{thmB} in the magnetic setting. The symplecticity of $Q$ is a natural condition. Indeed, when the Hamiltonian flow of $H$ induces a circle action on a whole neighborhood of $Q$, then $Q$ is automatically symplectic \cite{McDuff_Salamon_intro}. Moreover, the key condition arising in our results concerns the relation between the fiberwise Hessian $\gamma$ and the fiberwise symplectic form $\rho$.

\begin{definition}
The metric $\gamma$ is called conformally $\rho$-compatible if $A=aJ$, where $a\colon Q\to\mathbb R$ is a positive function and $J$ is an almost-complex structure, that is, $J^2=-\mathrm{id}$. We say that $\gamma$ is $\rho$-compatible if $a=1$.
\end{definition}

We now turn to a crucial example showing that this compatibility condition naturally gives rise to Zoll dynamics on low-energy levels.

\subsection{A Crucial Example: The Coupling Form}\label{example}
Let $\nabla$ be any affine connection of $\pi\colon E\to Q$. The connection $\nabla$ yields an isomorphism $T_{(q,v)}E\cong T_qQ\oplus E_q$ made of a horizontal projection
\begin{equation}
T_{(q,v)}E\to T_qQ,\qquad \xi\mapsto \xi^\pi:=d_{(q,v)}\pi\xi
\end{equation}
and a vertical projection
\begin{equation}
T_{(q,v)}E\to E_q,\qquad \xi\mapsto \xi^\nabla:=\frac{\nabla}{dt}Z(0),
\end{equation}
where $Z: (-\delta,\delta) \rightarrow E$ such that $Z(0)=(q,v)$ and $\dot{Z}(0)=\xi$, and $\frac{\nabla}{dt}$ is the covariant derivative along the curve $(\pi\circ Z)$. The vertical and horizontal distributions are defined as
\begin{equation}
\mathcal V=\ker(\xi\mapsto \xi^\pi),\qquad \mathcal H=\ker(\xi\mapsto\xi^\nabla),
\end{equation}
so that there is a decomposition
\begin{equation}
T_{(q,v)}E=\mathcal H_{(q,v)}\oplus\mathcal V_{(q,v)}
\end{equation}
and we have the horizontal and vertical lift isomorphisms as inverses of the horizontal and vertical projections
\begin{equation}
T_qQ\to T_{(q,v)}E,\quad \zeta\mapsto \zeta^h,\qquad E_q\to T_{(q,v)}E,\quad w\mapsto w^v.
\end{equation}
Using the lifts, for every $(q,v)\in E$ we can define the horizontal differential $d^h_{(q,v)}K\in T_q^*Q$ and the vertical differential $d^v_{(q,v)}K\in E_q^*$ of a function $K\colon E\to \mathbb R$ by letting
\begin{equation}
\begin{aligned}
d^h_{(q,v)} K\zeta&:=d_{(q,v)} K(\zeta^h),\quad \forall\,\zeta\in T_qQ,\\
d^v_{(q,v)} Kw&:=d_{(q,v)}K(w^v),\quad \forall\,w\in E_q.
\end{aligned}
\end{equation}

After these preliminaries, we can define a symplectic form $\omega^{\sigma,\rho,\nabla}$ on the small neighborhood $M$ of $Q$ given by
\begin{equation}
\omega^{\sigma,\rho,\nabla}:=\pi^*\sigma+\tfrac12d\tau^{\rho,\nabla},
\end{equation}
where $\tau^{\rho,\nabla}$ is the angular or coupling one-form \cite{McDuff_Salamon_intro} given by 
\begin{equation}\label{def_of_coupling_1_form}
\tau^{\rho,\nabla}_{(q,v)}(\xi)=\rho_q(v,\xi^\nabla),\qquad\forall\, (q,v)\in E,\ \forall\, \xi\in T_{(q,v)}E.
\end{equation}
By the symplectic neighborhood theorem \cite{Weinstein_nbd_thm}, upon shrinking $M$ inside $E$, there is a symplectomorphism
\begin{equation}
\Psi^\nabla\colon (M,\omega^{\sigma,\rho,\nabla})\to (M',\omega)    
\end{equation}
to a neighborhood $M'$ of $Q$, fixing $Q$ pointwise.

In the horizontal-vertical splitting of $T_{(q,v)}E$ induced by $\nabla$, the symplectic form $\omega^{\sigma,\rho,\nabla}$ is written as
\begin{equation}\label{symplectic_form_matrix_morse-bott-case}
\omega^{\sigma,\rho,\nabla}_{(q,v)}=\begin{pmatrix}
\sigma_q+\tfrac12\rho_q(R^\nabla_q(\cdot,\cdot)v,v)&\tfrac12(\nabla_{\cdot\,}\rho)(v,\cdot)\\ -\tfrac12(\nabla_{\cdot\,}\rho)(v,\cdot)& \rho_q
\end{pmatrix},
\end{equation}
where $R^\nabla_q$ is the curvature of $\nabla$.

Let us now suppose that
\begin{equation}
\text{$\gamma$ is $\rho$-compatible},\qquad\nabla\rho=0,\qquad\nabla \gamma=0.    
\end{equation}
In particular, $\rho(\cdot,J\cdot)=\gamma$ and 
\begin{equation}\label{symplectic_form_matrix_morse-bott-case_b}
\omega^{\sigma,\rho,\nabla}_{(q,v)}=\begin{pmatrix}
\sigma_q+\tfrac12\rho_q(R^\nabla_q(\cdot,\cdot)v,v)&0\\ 0& \rho_q
\end{pmatrix},
\end{equation}
If we assume, in the new coordinates given by $\Psi^\nabla$, that $H=H^\gamma$, then 
\begin{equation}
d^hH=0,\qquad d^vH=\gamma(\cdot,v)=\rho(\cdot, Jv).
\end{equation}
Therefore, 
\begin{equation}
X_H(q,v)=(J_qv)^v,\qquad \Phi_H^t(q,v)=(q,e^{tJ_q}v)
\end{equation}and $\Sigma_\varepsilon$ is Zoll for every $\varepsilon$ sufficiently small. Up to symplectomorphisms, these are the only examples of Hamiltonians which are Zoll at all small energy levels by the Marle \cite{Marle} equivariant symplectic neighborhood theorem \cite[Corollary II.1.12 and Remark II.1.13]{Audin}. Indeed, if $H$ is such a Hamiltonian on $(M,\omega)$, then the symplectomorphism $\Psi^\nabla$ with $\nabla $ can be chosen to intertwine the Hamiltonians $H\circ\Psi^\nabla=H^\gamma$, where $\gamma$ is $\rho$-compatible, $\nabla\rho=0$ and $\nabla\gamma=0$.  

\subsection{The Main Theorem in the Symplectic Setting}
Our first main theorem shows that example contained in Section \ref{example} is universal among systems that are Zoll along a sequence of energies converging to the Morse--Bott minimum.
\begin{bigthm}\label{thmA}
Let $H$ be a smooth Hamiltonian on a symplectic manifold $(M,\omega)$ having a Morse--Bott minimum at a connected, closed submanifold $Q\subset M$, and satisfying the normalization \eqref{normalization_of_A_morse-bott}. If $H$ is Zoll along a sequence of energies converging to $\min H$, then the fiberwise Hessian $\gamma$ of $H$ along the symplectic normal bundle at $Q$ is $\rho$-compatible, where $\rho$ is the restriction of $\omega$ to the normal bundle.
\end{bigthm}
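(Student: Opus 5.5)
We plan to prove Theorem \ref{thmA} by rescaling the dynamics near $Q$ and comparing the limiting periodic behavior with the linearized flow $(q,v)\mapsto(q,e^{tA_q}v)$. Concretely, we work in the normal form $(M,\omega^{\sigma,\rho,\nabla})$ provided by $\Psi^\nabla$ for some connection $\nabla$ with $\nabla\rho=0$. We introduce the fiberwise dilation $\delta_\varepsilon(q,v)=(q,\varepsilon v)$. A computation with \eqref{symplectic_form_matrix_morse-bott-case} shows that the rescaled Hamiltonian $\varepsilon^{-2}H\circ\delta_\varepsilon$ converges in $C^1$ on compact sets to $H^\gamma$. Under the same rescaling, $\varepsilon^{-2}\delta_\varepsilon^*\omega^{\sigma,\rho,\nabla}$ has vertical part $\rho$ and horizontal part $\varepsilon^{-2}\pi^*\sigma+O(1)$. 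It follows that the rescaled Hamiltonian vector field on $\Sigma_1^\gamma:=\{H^\gamma=\tfrac12\}$ converges to the vertical field $(A_qv)^v$, with a horizontal drift of size $O(\varepsilon^2)$. Hence, on time scales of order $1$, the flow on $\Sigma_\varepsilon$ is $C^0$-close (after rescaling) to the linear flow $e^{tA_q}$ in each fiber.

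The first step is to show that the minimal periods $T_n$ of the Zoll levels $\Sigma_{\varepsilon_n}$ stay bounded and converge, along a subsequence, to some $T>0$. We would use the normalization \eqref{normalization_of_A_morse-bott} and an action/volume argument here. For a Zoll level, the volume of the energy shell $\{H\le\frac12\varepsilon^2\}$ has derivative in $\varepsilon$ controlled by the period, via $\frac{d}{dE}\mathrm{Vol}(\{H\le E\})=T\cdot\mathrm{Vol}(\text{quotient})$. The volume is asymptotically $\frac{(\pi\varepsilon^2)^k}{k!}\int_Q\frac{\sigma^m}{\sqrt{\det\gamma_\rho}}$, where $\sqrt{\det\gamma_\rho}=\det A^{1/2}$ in Darboux frames. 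Alternatively, a lower bound on periods follows from $|X_H|=O(\varepsilon)$ against orbits that must leave a small ball. An upper bound follows because orbits are close to the linear flow, which is bounded. This gives $T_n\to T$ along a subsequence.

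Passing to the limit, we get that every point satisfies $e^{TA_q}v=v$, so $e^{TA_q}=\mathrm{id}$ for all $q$. Since $A_q\in\mathfrak{sp}(\rho_q)$ and $\gamma>0$, $A_q$ is $\gamma$-skew and semisimple with eigenvalues $\pm i\lambda_j(q)$, so each $\lambda_j(q)\in\frac{2\pi}{T}\mathbb Z_{>0}$. The key step is then to upgrade this to the statement that all $\lambda_j(q)$ equal $\frac{2\pi}{T}$, i.e.\ $A=\frac{2\pi}{T}J$. Freeness of the circle action on $\Sigma_{\varepsilon_n}$ with minimal period $T_n$ passes to the limit only weakly, since limits of free actions can have exceptional orbits. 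So we would use a second argument. The $\mathbb Z$-valued Maslov/Conley–Zehnder-type data is locally constant, so the $\lambda_j$ are constant multiples of $2\pi/T$. Suppose some $\lambda_j=\ell\frac{2\pi}{T}$ with $\ell\ge2$ while another eigenvalue is smaller, or every eigenvalue is a multiple of $\ell$. In the first case, the points $v$ in the $\lambda_j$-eigenplane return to themselves at time $T/\ell$ under the linear flow. We would then show, via a local Poincaré-map/fixed-point (Lefschetz or index) argument in a transverse slice, that for large $n$ there is an orbit of $\Sigma_{\varepsilon_n}$ near that eigenplane with period close to $T/\ell$. This contradicts freeness with minimal period $T_n\to T$. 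In the second case, $T$ was not the limit of minimal periods. We expect this step to be the main obstacle, because one must produce genuine short orbits of the nonlinear flow from resonances of the linearization. Our first attempt would be to use the fact that the return map at time $T_n/\ell$ is a $C^1$-small perturbation of a map with a nondegenerate fixed-point component of nonzero Euler characteristic, namely a sphere bundle over $Q$ restricted to the eigenspace (using Bottkol's theorem \cite{Bottkol}).

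Once $A=cJ$ with $c$ constant, the normalization \eqref{normalization_of_A_morse-bott} gives $\det A=c^{2k}$. Hence $c^{-2k}\mathrm{Vol}_\sigma(Q)=\mathrm{Vol}_\sigma(Q)$, so $c=1$ and $\gamma$ is $\rho$-compatible.
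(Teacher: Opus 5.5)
The proposal has two genuine gaps. The first is in your first step, and it hides most of the content of the theorem. Because Zoll is only required up to a global time reparametrization, each $\Sigma_{\varepsilon_n}$ carries a period \emph{function} $T_n$, and you need these functions to converge uniformly to a constant. Neither of your arguments bounds them from above:
\begin{itemize}
\item $C^0$-closeness to $e^{tA_q}$ on time intervals of length $O(1)$ says nothing about when a nonlinear orbit closes. When the $\lambda_j(q)$ are rationally independent, the linear flow on $E_q$ is not periodic at all.
\item The volume identity presupposes a constant period and involves the volume of the orbit space, which you do not control. The integral normalization carries no pointwise information.
\end{itemize}
The paper gets this step from two inputs you are missing. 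One is the Floer-theoretic existence (Ginzburg--G\"urel, Usher) of a periodic orbit of uniformly bounded period on every low level. The other is the pointwise renormalization $\tilde A=(\det A)^{-1/2k}A$, which places all periods of resonant fibers in the $q$-independent countable set $2\pi(\mathbb Q_+^k)^{1/k}$. An intermediate value argument for the continuous period functions on the connected $\Sigma$ then forces $\max T_n-\min T_n\to 0$.

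This works only for the renormalized field, and that matters. For the original flow, the statement $e^{TA_q}=\mathrm{id}$ with $T$ independent of $q$ already contains the constancy of the conformal factor. Nothing in your proposal excludes $A=a(q)J$ with $a$ nonconstant, whose linear periods $2\pi/a(q)$ vary with $q$. The paper needs a separate argument here. With a connection satisfying $\nabla\rho=0$ and $\nabla J=0$, the horizontal part of the rescaled field is $\varepsilon^2X_{-a}+O(\varepsilon^3)$, where $X_{-a}$ is the $\sigma$-Hamiltonian vector field of $-a$ on $Q$. So if $da\neq 0$ somewhere, the projected orbits drift slowly and have periods at least of order $\varepsilon^{-2}$, contradicting the convergence of the renormalized periods to $2\pi$.

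The second gap is the mechanism in your resonance step. The set of points fixed by the linear flow at time $T/\ell$ is a bundle of odd-dimensional spheres over $Q$, so its Euler characteristic vanishes, contrary to what you assert. Also, the bifurcating orbits have period $T_{\min}+O(\varepsilon)$ rather than a prescribed one, so fixed points of a fixed-time map are the wrong object. After Bottkol's normal form, periodic orbits near this set correspond to zeros of a vector field on the quotient $R$. This $R$ is a $\mathbb{C}P^{k_{\tilde a_1}-1}$-bundle over $Q$ with $\chi(R)=k_{\tilde a_1}\chi(Q)$, which vanishes for instance when $Q$ is a torus. So no Lefschetz or index argument can work in general.

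The paper uses the variational structure instead. It shows $dS_\varepsilon=\mathcal K_\varepsilon(V_\varepsilon)$ for a reduced action $S_\varepsilon$, and proves that $\mathcal K_\varepsilon$ is invertible even though $\omega_\varepsilon$ degenerates like $\varepsilon^2\rho$ in the vertical directions. This is exactly where the refined normal-form estimates are needed, notably that the horizontal part of $U_\varepsilon$ is $O(\varepsilon^2)$ rather than $O(\varepsilon)$. Zeros of $V_\varepsilon$ then come from critical points of a function on the closed manifold $R$, which always exist.
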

Let us give a brief sketch of the proof in three steps. Assume that $H$ is Zoll along a sequence of energies converging to the minimum. \medskip

\textit{Step 1.} There exists $T_*>0$ such that, up to a global time reparametrization independent of $n$, all periods of all periodic orbits with energy $\varepsilon_n$ converge to $T_*$ as $n$ goes to infinity, see Theorem \ref{t:T_n}. This result combines two facts. First, there exists a periodic orbit on every low energy level whose period is uniformly bounded \cite{usher}. Second, as $n$ goes to infinity, the dynamics converges to the fiberwise linear flow generated by $A$ and, up to a global time reparametrization, all periods of this fiberwise linear flow belong to a nowhere dense set.

\textit{Step 2.} First, we use Step 1 to show that the fiberwise linearized flow generated by $A$ is Besse. If it were not Zoll, then there exists a periodic submanifold $W$ strictly contained in $E$ made of orbits with minimal period. Building on the work of Bottkol \cite{Bottkol} and Kerman \cite{Kerman}, we show that periodic orbits bifurcate from $W$ for all small energy. This would contradict the existence of $T_*$ in Step 1.

\textit{Step 3.} By Step 2, $\gamma$ is conformally $\rho$-compatible, that is, $A=aJ$ for some function $a\colon Q\to(0,\infty)$ and we need to show that $a$ is constant. We use coordinates in which $\omega=\omega^{\sigma,\rho,\nabla}$. Since $\rho$ and the fiberwise Hessian $\gamma$ do not depend on the chosen symplectic connection $\nabla$, we can assume that the connection satisfies $\nabla \rho=0$ and $\nabla J=0$. In these special coordinates, we show that at every regular point $q\in Q$ of the function $a$ the Hamiltonian flow of $H$ has a slow horizontal drift in the direction of the Hamiltonian flow of the function $a$ on $(Q,\sigma)$. This is again a contradiction to the existence of $T_*$ in Step 1. Thus the proof sketch of Theorem \ref{thmA} is complete.
\medskip

Theorem \ref{thmA} shows that being Zoll along a sequence of energies converging to the minimum implies that $\gamma$ is $\rho$-compatible. In this case, we can choose $\nabla$ such that both $\nabla\rho=0$ and $\nabla \gamma=0$ hold. Therefore, the Hamiltonian $H^\gamma$, which is the quadratic part of $H$, gives a circle action on $M$ that is free outside the zero section and has the zero section as fixed-point set, see Example \ref{example}.

Thus it should be possible to compute a fiberwise Birkhoff normal form for $H$ in powers of $v$. The Zoll condition will impose restrictions on the terms of the Birkhoff normal form. In the general setting considered here, this is likely to be a delicate task. There is, however, a case of great physical interest for which the lowest order of the Birkhoff normal form can be explicitly computed and yields interesting geometric information. This is the case of symplectic magnetic systems and will be presented in the next subsection.
\subsection{The Magnetic Setting}
An important example of the symplectic setting of the previous subsection is given by magnetic systems $(Q,g,\beta)$ \cite{Arn}. Here, $Q$ is a closed and connected manifold, $g$ is a Riemannian metric on $Q$, and $\beta$ is a closed 2-form on $Q$ referred to as the magnetic form. The form $\beta$ yields a symplectic form $\omega_{\mathrm{can},\beta}$ on the cotangent bundle $\pi\colon T^*Q\to Q$ given by 
\begin{equation}
\omega_{\mathrm{can},\beta}:=d\lambda-\pi^*\beta,
\end{equation}
where $\lambda$ is the canonical 1-form defined by 
\begin{equation}
\lambda_{(q,p)}(u)=p(d_{(q,p)}\pi \cdot u),\qquad \forall\,(q,p)\in T^*Q,\ \forall\,u\in T_{(q,p)}T^*Q.    
\end{equation}
The metric $g$ yields a kinetic Hamiltonian
\begin{equation}
H^g\colon T^*Q\to\mathbb R,\qquad H^g(q,p)=\tfrac12g_q(p,p),
\end{equation}
where we also denote by $g$ the dual metric on the cotangent bundle.

To ease our geometric intuition, we will pull back the symplectic form $\omega_{\mathrm{can},\beta}$ and the Hamiltonian $H^g$ to the tangent bundle $TQ$ using the isomorphism given by the metric $g$. We denote the objects with the same symbols. Notice that on $TQ$ we have
\begin{equation}
\lambda_{(q,v)}(u)=g_q(v,d_{(q,v)}\pi \cdot u),\qquad \forall\,(q,v)\in TQ,\ \forall\,u\in T_{(q,v)}TQ.
\end{equation}
On $TQ$, the flow lines of $H$ at energy $\tfrac12\varepsilon^2$ are $\Phi_H^t(q(0),\dot q(0))= (q(t),\dot q(t))$ where $q\colon\mathbb R\to Q$ are curves with speed $\varepsilon$ and satisfying the magnetic geodesic equation
\begin{equation}
\frac{\nabla^g}{dt}\dot q=B_q\dot q,
\end{equation}
where $\nabla^g$ is the Levi-Civita connection of $g$ and $B_q\colon T_qQ\to T_qQ$ is the Lorentz endomorphism defined by
\begin{equation}
g_q(B_q u,v)=\beta_q(u,v),\qquad\forall\,q\in Q,\ \forall\,u,v\in T_qQ.
\end{equation}
The existence problem of periodic magnetic geodesics on low energy levels is the subject of a vast and beautiful literature, see \cite{NT,Gin96,CMP,Merry0,Merry1,Merry2,Schneider,AB,CZ,MG} for some milestones.
The function $H^g$ has a Morse--Bott minimum at the zero section $Q\subset TQ$, which is a symplectic submanifold for $\omega_{\mathrm{can},\beta}$ if and only if $\beta$ is symplectic on $Q$. Thus, in this situation one can apply the work on periodic orbits near Morse--Bott minima that we discussed in the previous subsection, see the literature cited in Section \ref{s:setting} and Theorem \ref{thmA}. In fact, magnetic systems have been a source of impulse for studying dynamics near symplectic submanifolds.\medskip

From now on, we will assume that $\beta$ is symplectic. In this case the restriction of $\omega_{\mathrm{can},\beta}$ to $Q$ is $\sigma=-\beta$.  The $\omega_{\mathrm{can},\beta}$-orthogonal to $T_qQ$ inside $T_{(q,0)}TQ$ is expressed in the horizontal-vertical splitting as the image of the map \begin{equation}\label{j-map}
\jmath_q\colon T_qQ\to T_{(q,0)}TQ,\quad \jmath_q(u)=(u,B_qu).
\end{equation}
Using the map $\jmath_q$ to parametrize the orthogonal space to $T_qQ$, we see that
\begin{itemize}
    \item the fiberwise Hessian of $H^g$ is $\gamma=B^*g$;
    \item the fiberwise symplectic form is $\rho=\beta$;
    \item the corresponding generator of the linear flow is $A=B$;
    \item the normalization \eqref{normalization_of_A_morse-bott} becomes \begin{equation}\label{e:normalmag}
\mathrm{Vol}_g(Q)=\mathrm{Vol}_\beta(Q).
\end{equation}
\end{itemize}
By Section \ref{example}, for every affine connection $\nabla$ on $TQ$, there is a symplectomorphism
\begin{equation}\label{weinstein symplectomorphism}
\Psi^\nabla\colon (M,\omega^{-\beta,\beta,\nabla})\to (M',\omega_{\mathrm{can},\beta})    
\end{equation}
between two neighborhoods $M$ and $M'$ of the zero section with $d_{(q,0)}\Psi (0,u)=\jmath_q(u)$.

\subsection{A Crucial Example: Complex Space Forms}\label{example_magnetic}
The definition of magnetic systems does not require any compatibility between $g$ and $\beta$. However, there are some special classes of magnetic systems that deserve special attention. If $g$ is $\beta$-compatible, that is, $B=J$ for some almost complex structure $J$, then $(g,\beta)$ is said to be almost Kähler. When $J$ is integrable, then $(g,\beta)$ is a Kähler structure. Kähler manifolds $Q$ for which the geodesic reflection at every point extends to a global holomorphic involution of $Q$ are called Hermitian symmetric spaces. For these spaces, \cite{B24} yields a precise description of the map $\Psi^{\nabla^g}$, of the Hamiltonian $H^g\circ\Psi^{\nabla^g}$, and of the neighborhoods $M$ and $M'$ in \eqref{weinstein symplectomorphism} in terms of the holomorphic sectional curvature of $\nabla^g$. If $SQ\to Q$ is the unit sphere bundle of $TQ$ with respect to $g$ and $\pi\colon P_{\mathbb C}(TQ)\to Q$ is the complex projectivization of $TQ$ whose fibers $P_{\mathbb C}(T_qQ)$ are the complex lines contained in $T_qQ$, then the holomorphic sectional curvature $K^{g,J}\colon P_{\mathbb C}(TQ)\to\mathbb R$ is defined as
\begin{equation}
K^{g,J}(\mu_q(v)):=g_q(R^{\nabla^g}_q(v,J_qv)J_qv,v),\quad \forall\,v\in S_qQ,
\end{equation}
where $\mu_q\colon S_qQ\to P_{\mathbb C}(T_qQ)$ assigns to a vector the complex line that contains it. Kähler manifolds for which the holomorphic sectional curvature is constant are locally Hermitian symmetric spaces, their universal cover being isomorphic to one of the three complex space forms: the euclidean space $\mathbb C^m$, the complex projective space $\mathbb C P^m$, the complex hyperbolic space $\mathbb C H^m$ \cite[Ch.\ IX.7]{KN69}. In this case, Bimmermann showed that the flow of $H^g$ on $(M',\omega_{\mathrm{can},\beta})$ is Zoll at every energy level with period given by
\begin{equation}
T(H^g)=\frac{2\pi}{\sqrt{1+2\kappa H^g}},\qquad \kappa:=K^{g,J}\in\mathbb R.    
\end{equation}
Since the flow of $H^g$ is Zoll on $(M,\omega^{-\beta,\beta,\nabla^g})$ at every energy level with period $2\pi$ by Example \ref{example}, and $H^g\circ \Psi^{\nabla^g}=f(H^g)$ for some function $f$, we also get $\frac{d f}{dH}T(H)=2\pi$, which yields
\begin{equation}
H^g\circ\Psi^{\nabla^g}=H^g+\frac{\kappa}{2}(H^g)^2.
\end{equation}
\subsection{The Main Theorem in the Magnetic Setting}
In our second main theorem, we build on Theorem \ref{thmA} and explore the universality of the example in Section \ref{example_magnetic} among symplectic magnetic systems which are Zoll along a sequence of energies converging to zero. For this purpose, we recall the definition of the Chern connection $\nabla^{g,J}$ for an almost Hermitian structure $(g,J)$. Being Hermitian means that $g$ is a Riemannian metric on $Q$ and $J$ is an almost complex structure on $Q$ such that $\beta:=g(J\,\cdot\,,\,\cdot\,)$ is a 2-form, albeit not necessarily closed. The Chern connection is then defined as the only Hermitian connection whose torsion $T^{g,J}$ has vanishing $(1,1)$-component, that is,
\begin{equation}
\nabla^{g,J}g=0,\qquad\nabla^{g,J}J=0,\qquad T^{g,J}(J\,\cdot\,,\,\cdot\,)=T^{g,J}(\,\cdot\,,J\,\cdot\,).
\end{equation}
When $(g,J)$ is almost Kähler, that is, $\beta$ is closed and $(Q,g,\beta)$ is a magnetic system, then by \cite[Section~2.1]{McDuff_Salamon_jhol}
\begin{equation}
T^{g,J}=-\frac{1}{4}N^J,
\end{equation}
where $N^J$ is the Nijenhuis tensor
\begin{equation}
N^J(X,Y):=[X,Y]+J[JX,Y]+J[X,JY]-[JX,JY],
\end{equation}
which vanishes if and only if $J$ is integrable \cite{NN57}. We interpret the Nijenhuis tensor $N^J\in \Omega^1(Q,\mathrm{End}(TQ))$ as a one-form on $Q$ with values in the endomorphisms of $TQ$ by setting
\begin{equation}
N^J_v:=N^J_q(v,\cdot)\colon T_qQ\to T_qQ,\qquad \forall\,q\in Q,\ \forall\,v\in T_qQ.
\end{equation}
We denote by $(N^J_v)^*\colon T_qQ\to T_qQ$ the adjoint of $N^J_v$ with respect to $g$. If $\beta$ is closed, then $N^J_v$ is antisymmetric for all $v$ if and only if $N^J=0$, that is, $J$ is integrable. On the other hand, the anticomplex relationships
\begin{equation}
N_{Jv}=-JN_v=-N_vJ,\qquad N_{Jv}^*=-JN_v^*=-N_v^*J.
\end{equation}
hold and tell us that the function
\begin{equation}
|(N^J)^*|^2\colon P_\mathbb C(TQ)\to\mathbb R,\qquad |(N^J)^*|^2(\mu_q(v)):=|(N^J_v)^*v|^2
\end{equation}
is well defined.
\begin{bigthm}\label{thmB}
Let $(Q,g,\beta)$ be a magnetic system with $\beta$ symplectic satisfying the normalization \eqref{e:normalmag}. If $H^g$ is Zoll along a sequence of energies converging to zero, then $g$ is $\beta$-compatible, that is, $B=J$ is an almost complex structure. Moreover, there exists a constant $\kappa$ such that
\begin{equation}\label{curvature torsion constant}
K^{g,J}-\tfrac{1}{24}|(N^J)^*|^2=\kappa,
\end{equation}
where $K^{g,J}$ and $N^J$ are the holomorphic sectional curvature of the Chern connection and the Nijenhuis tensor. If $J$ is integrable, then $(g,J)$ is a complex space form and the magnetic flow is Zoll for all low energy values. 
\end{bigthm}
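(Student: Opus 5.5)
The plan has three stages: compatibility from Theorem \ref{thmA}, a fourth-order averaging (Birkhoff normal form) computation that yields the curvature identity, and the integrable case via the complex space form classification.

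\textbf{Compatibility.} This part is a direct application of Theorem \ref{thmA} to $(T Q,\omega_{\mathrm{can},\beta})$ with $H=H^g$ and $Q$ the zero section. That section identified $\gamma=B^*g$, $\rho=\beta$, $A=B$, and matched the normalization \eqref{normalization_of_A_morse-bott} with \eqref{e:normalmag}. Theorem \ref{thmA} then gives $\rho$-compatibility, i.e.\ $B=J$ with $J^2=-\mathrm{id}$, so $(g,\beta)$ is almost Kähler. In this case $\gamma=g$.

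\textbf{Normal form.} Next I would pass to the Weinstein coordinates $\Psi^\nabla$ of \eqref{weinstein symplectomorphism} with $\nabla=\nabla^{g,J}$ the Chern connection. It satisfies $\nabla\beta=0$ and $\nabla g=0$, so by Section \ref{example} the quadratic part $H^g$ generates the $2\pi$-periodic circle action $v\mapsto e^{tJ}v$. I would expand
\begin{equation*}
H^g\circ\Psi^\nabla=H^g+H_3+H_4+O(|v|^5),
\end{equation*}
where $H_3,H_4$ are fiberwise homogeneous of degrees $3$ and $4$. These coefficients come from the Taylor expansion of $\Psi^\nabla$, which I would compute via Moser's path method comparing $\omega^{-\beta,\beta,\nabla}$ with $(\Psi^\nabla)^*\omega_{\mathrm{can},\beta}$. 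The expected inputs are the curvature $R^\nabla$ entering through \eqref{symplectic_form_matrix_morse-bott-case_b} and the torsion $T^{g,J}=-\tfrac14 N^J$, which appears when relating the Chern connection to Levi-Civita in the map $\exp$-type identification of $T Q$ with its neighborhood. Averaging over the circle action kills the odd term: the average of $H_3$ vanishes. One step of Birkhoff normalization then produces an averaged quartic $\bar H_4$ plus a correction quadratic in $H_3$, namely the average of $\{\,\cdot\,,H_3\}$ applied to a primitive of $H_3$. The target identity is
\begin{equation*}
\bar H_4^{\mathrm{eff}}(q,v)=\tfrac{1}{8}\bigl(K^{g,J}-\tfrac1{24}|(N^J)^*|^2\bigr)(\mu_q(v))\,|v|^4+(\text{terms killed by averaging}),
\end{equation*}
and the $-\tfrac1{24}|(N^J_v)^*v|^2$ piece should arise exactly from the $H_3$-squared correction. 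I expect this explicit bookkeeping of constants, with the Nijenhuis term entering through $H_3$, to be the main obstacle. As a check I would first redo it in the Kähler case, where it must reproduce $H^g+\tfrac{\kappa}{2}(H^g)^2$ from Section \ref{example_magnetic}.

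\textbf{Rigidity and the integrable case.} After normalization the Hamiltonian is $H^g+F(\mu_q(v))(H^g)^2+O(|v|^5)$ on a $P_{\mathbb C}(TQ)$-type reduced space, where $F=c\,(K^{g,J}-\tfrac1{24}|(N^J)^*|^2)$. Rescale $v=\varepsilon w$ and reduce by the circle action. The slow dynamics on the reduced space $P_{\mathbb C}(TQ)$, of dimension $2m+2m-2$ with its induced symplectic form, is then the Hamiltonian flow of $F$ at order $\varepsilon^2$. I would argue as in Step 3 of the proof of Theorem \ref{thmA}. If $F$ were nonconstant, at a regular point the flow would drift slowly along $X_F$ over times of order $\varepsilon^{-2}$. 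This contradicts Zollness near $\varepsilon_n$, since by Theorem \ref{t:T_n} all periods converge to a single $T_*$ and orbits must close after time about $T_*$. Hence $F$ is constant, giving \eqref{curvature torsion constant}. If $J$ is integrable, then $N^J=0$, so $K^{g,J}=\kappa$ with the Chern connection equal to Levi-Civita. Then $(g,J)$ is Kähler with constant holomorphic sectional curvature, hence a complex space form, and Bimmermann's result quoted in Section \ref{example_magnetic} gives Zoll flow at all low energies.
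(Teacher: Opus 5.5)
Your compatibility step and your treatment of the integrable case match the paper. Your middle step takes a different route, and the paper names it explicitly as an option before setting it aside: expand $H^g\circ\Psi^{\nabla^{g,J}}$ in Weinstein coordinates and Birkhoff-average. The paper instead builds, order by order in $\varepsilon$, a Moser isotopy on $SQ$ for the family $\omega_\varepsilon=\varepsilon\,d\lambda-\pi^*\beta$, with $\psi_\varepsilon^*\omega_\varepsilon=-\pi^*\beta+d\big((H_\varepsilon\circ\mu)\tau\big)+o(\varepsilon^4)$. Your route is viable in principle, but the proposal as written has two genuine gaps.

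\textbf{The coefficient $F$ is never identified.} The specific content of the statement is that the averaged quartic coefficient is a multiple of $K^{g,J}-\tfrac1{24}|(N^J)^*|^2$. In your proposal this sits entirely in the ``target identity'', which you assert rather than derive. Your suggested mechanism is only a guess: that torsion enters solely through $H_3$, with the $-\tfrac1{24}$ coming from the $H_3$-squared correction. A priori $H_4$ itself can carry torsion-squared and $\nabla T$ terms. Even obtaining $H_3,H_4$ requires the Taylor expansion of $\Psi^\nabla$ to third order in $v$, i.e.\ running Moser's method to that order, which is exactly the computation the paper avoids. Without it, your argument only shows that some unidentified circle-invariant quartic coefficient is constant. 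In the paper, the torsion enters through the block $g_q(v,T_q(\cdot,\cdot))$ of $d\lambda$ in the Chern splitting. Killing the $\tau$-component at second order forces the vertical part $Z_0^\nabla=-\tfrac13JT^*_vv$ of the isotopy generator. At third order, the direct term $K-\tfrac23|T^*|^2$ and the contribution $-\tfrac12(K-\tfrac23|T^*|^2)$ coming from Lemma~\ref{lem: Z0 identity} sum to $h_0'''=\tfrac12\hat K$. With $T=-\tfrac14N^J$ this gives $\hat K=K-\tfrac1{24}|N^*|^2$.

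\textbf{The drift argument misses the slower horizontal scale.} After rescaling and reducing, the relevant form on $P_{\mathbb C}(TQ)$ is to leading order $-\pi^*\beta+\tfrac{\varepsilon^2}{2}\delta$. Its vertical block $\varepsilon^2\bar\beta$ degenerates as $\varepsilon\to0$, so there is no fixed ``induced symplectic form'' for which the slow flow is $\varepsilon^2X_F$. The reduced Hamiltonian $\tfrac{\varepsilon^4}{4}F$ produces a vertical drift of size $\varepsilon^2$, governed by $d^vF$, but a horizontal drift of size only $\varepsilon^4$, governed by $d^hF$. Your embedding argument over times of order $\varepsilon^{-2}$ therefore yields only $d^vF=0$. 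It is silent when $F$ is constant on the fibers but not on $Q$. For $\dim Q=2$, where $P_{\mathbb C}(TQ)=Q$ and there is no vertical direction, it yields nothing at all. You need the paper's second step:
\begin{itemize}
\item once $d^vF=0$, write $F=f\circ\pi$, so the horizontal drift is $\tfrac{\varepsilon^4}{4}X_f$, where $X_f$ is the Hamiltonian vector field of $f$ on $(Q,-\beta)$, independent of the fiber point;
\item rerun the embedding argument over times of order $\varepsilon^{-4}$;
\item check that the normal-form remainder contributes only $o(\varepsilon^4)$ to the horizontal velocity, so that it stays negligible over this longer interval (with your $O(|v|^5)$ remainder it contributes $O(\varepsilon^5)$).
\end{itemize}
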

\begin{bigcor}
A conformally Kähler magnetic system $(Q,g,\beta)$ satisfying the normalization \eqref{e:normalmag} is Zoll along a sequence of energies converging to zero if and only if $B=J$ and $(g,J)$ is a complex space form.\hfill\qed
\end{bigcor}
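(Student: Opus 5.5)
The corollary should follow by combining Theorem \ref{thmB} with the example of Section \ref{example_magnetic}. I read ``conformally Kähler'' as meaning that $B=aJ$ for a positive function $a\colon Q\to(0,\infty)$ and an \emph{integrable} almost complex structure $J$. Equivalently, the underlying complex structure of the almost Hermitian data attached to $(g,\beta)$ is integrable. Both implications then reduce to results already stated.

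For the forward direction, assume $H^g$ is Zoll along a sequence of energies converging to zero. The first part of Theorem \ref{thmB}, which rests on Theorem \ref{thmA} together with the normalization \eqref{e:normalmag}, gives that $B$ is itself an almost complex structure. Since $B=aJ$ with $J^2=-\mathrm{id}$, the relation $B^2=a^2J^2=-a^2\,\mathrm{id}$ forces $a\equiv1$, so $B=J$. The almost complex structure produced by Theorem \ref{thmB} is therefore the integrable $J$, and $(g,J)$ is Kähler. Now $N^J=0$, so the correction term $|(N^J)^*|^2$ in \eqref{curvature torsion constant} vanishes. In the Kähler case the Chern connection coincides with the Levi-Civita connection, so $K^{g,J}$ is the usual holomorphic sectional curvature, and it equals the constant $\kappa$. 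The last sentence of Theorem \ref{thmB} then says that $(g,J)$ is a complex space form, i.e.\ locally isometric to $\mathbb C^m$, $\mathbb CP^m$ or $\mathbb CH^m$ with constant holomorphic sectional curvature $\kappa$.

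For the converse, assume $B=J$ and that $(g,J)$ is a complex space form with constant holomorphic sectional curvature $\kappa$. The normalization \eqref{e:normalmag} holds automatically, since $\det B=1$ gives $\mathrm{Vol}_g(Q)=\mathrm{Vol}_\beta(Q)$. By Bimmermann's result quoted in Section \ref{example_magnetic}, the flow of $H^g$ on $(T Q,\omega_{\mathrm{can},\beta})$ is Zoll on every energy level with period $T(H)=2\pi/\sqrt{1+2\kappa H}$. This period is finite and positive as long as $1+2\kappa H>0$, which holds for all sufficiently small $H$ whatever the sign of $\kappa$. Hence every low energy level is Zoll, and in particular $H^g$ is Zoll along any sequence of energies tending to zero.

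The only step needing care is the converse: Bimmermann's theorem is stated for Hermitian symmetric spaces, whereas a complex space form is only locally symmetric. One has to pass to the universal cover, where the flow is Zoll, and check that the circle action descends to $TQ$ with the same minimal period. I expect this to follow because the flow commutes with deck transformations and each orbit projects to a closed curve of the same period. Freeness on $TQ$ is the point to verify: a downstairs orbit could in principle close up earlier by returning to a deck translate of its starting point. For low energies this is excluded, because the orbits then lie in arbitrarily small balls of $Q$, smaller than the injectivity radius, so such an early return cannot occur.
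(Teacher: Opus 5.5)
Your proposal is correct and matches the paper's argument, which the paper leaves to the reader. The forward direction is Theorem~\ref{thmB} plus the observation that $B=aJ$ being an almost complex structure forces $a\equiv 1$, so the structure produced by the theorem is the integrable $J$; the converse is Bimmermann's result recalled in Section~\ref{example_magnetic}. Your additional argument for descending the Zoll property from the universal cover to the compact quotient is sound and fills in a step the paper leaves unstated: low-energy orbits have length $O(\varepsilon)$, while nontrivial deck transformations displace points by at least $2\,\mathrm{inj}(Q)$, so no orbit can close up early downstairs.
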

\begin{remark} It is unknown to us if there are examples of almost Kähler structures $(g,J)$ such that \eqref{curvature torsion constant} holds but $J$ is not integrable. Moreover, we don't know if given \eqref{curvature torsion constant}, the system is Zoll at every low energy level like in the integrable case. To better understand this problem, one could compute the next order in the expansion of the magnetic flow for low energies, when \eqref{curvature torsion constant} holds. Imposing the next order to be constant would give additional information on the almost Kähler structure $(g,J)$. Finally, we notice that a scalar version of the quantity above appears in \cite[Theorem 0.1]{Don00} with the Levi--Civita connection involved instead of the Chern connection. 
\end{remark}
Let us give a sketch of the proof of Theorem \ref{thmB} in two steps. Assume that $(g,\beta)$ is Zoll along a sequence of energies converging to zero. By Theorem \ref{thmA} and the identification given by the map $\jmath$ below \ref{j-map}, we deduce that $g$ is $\beta$-compatible. Therefore, $B=J$ and we can choose as symplectic connection the Chern connection $\nabla^{g,J}$. At this point, an option would be to compute higher orders in $v$ of the map $\Psi^{\nabla^{g,J}}$ to compute the first non-constant order of $H^g\circ\Psi^{\nabla^g}$. Instead, we choose a different route generalizing the method of \cite[Theorems~1.1 and~1.9]{AB_normal_forms_surfaces}, which proves Theorem~\ref{thmB} when $Q$ is a surface. We proceed in two steps.

\textit{Step 1.} For every $\varepsilon>0$, we will consider the restriction of $\omega_{\mathrm{can},\beta}$ to the energy level $\Sigma_\varepsilon$. The restricted two-form has a one-dimensional kernel which is generated by the magnetic vector field on $\Sigma_\varepsilon$. By rescaling $SQ\to \Sigma_\varepsilon$, $(q,v)\to(q,\varepsilon v)$, we pull back the restricted form to the form
\begin{equation}
\omega_\varepsilon:=\varepsilon d\lambda-\pi^*\beta
\end{equation}
on $SQ$. Using a Moser argument, we find a family of diffeomorphisms $\psi_\varepsilon\colon SQ\to SQ$ with $\psi_0=\mathrm{id}_{SQ}$ such that
\begin{equation}
\psi_\varepsilon^*\omega_\varepsilon=-\pi^*\beta+d\Big((H_\varepsilon\circ\mu)\tau\Big)+o(\varepsilon^4),
\end{equation}
where $\tau=\tau^{\beta,\nabla^{g,J}}$ is the angular form defined in \eqref{def_of_coupling_1_form}, $\mu\colon SQ\to P_\mathbb C(TQ)$ is the circle-bundle projection, and $H_\varepsilon\colon P_\mathbb C(TQ)\to \mathbb R$ is the function
\begin{equation}
H_\varepsilon:=\frac{\varepsilon^2}2+\frac{\varepsilon^4}{4}\frac{\hat K}{2},\qquad \hat K:=K^{g,J}-\tfrac{1}{24}|(N^J)^*|^2. 
\end{equation}

\textit{Step 2.} The form $\tau$ is the connection one-form of $\mu$. In particular $d\tau=\mu^*\delta$ for a curvature two-form $\delta$ on $P_\mathbb C(TQ)$. In the horizontal-vertical splitting, we have
\begin{equation}\label{eq:delta}
\delta_{\mu(q,v)}=\begin{pmatrix}
\rho_q(R_q^{\nabla^{g,J}}(\cdot,\cdot)v,v)&0\\
0&2\bar\beta_{\mu(q,v)}
\end{pmatrix},
\end{equation}
where $\bar\beta_{\mu(q,v)}$ is the Fubini-Study symplectic form induced by the fiberwise symplectic form $\beta_q$ at $\mu(q,v)$. These facts imply that, after performing the change of coordinates $\psi_\varepsilon$, the projection of the magnetic flow onto $P_\mathbb C(TQ)$ is generated, up to $o(\varepsilon^4)$, by the Hamiltonian vector field of $H_\varepsilon$ with respect to the symplectic form
\begin{equation}
-\pi^*\beta+H_\varepsilon \delta.
\end{equation}
We can decompose $X_{H_\varepsilon}$ in the horizontal-vertical splitting given by $\nabla^{g,J}$ on $T(P_\mathbb C(TQ))$ as
\begin{equation}
X_{H_\varepsilon}^\pi=\varepsilon^4Y+o(\varepsilon^4),\qquad X_{H_\varepsilon}^\nabla=\varepsilon^2Z+o(\varepsilon^2),
\end{equation}
where the vector fields $Y$ and $Z$ are defined by
\begin{equation}
\beta(Y,\cdot)=-\frac{1}{8}d^h \hat K,\qquad \bar\beta(Z,\cdot)=+\frac{1}{8}d^v\hat K.
\end{equation}
This means that the projected dynamics slowly drifts with speed $\varepsilon^2$ in the vertical direction and with speed $\varepsilon^4$ in the horizontal direction. Using the existence of $T_*$ from Step 1 in the proof of Theorem \ref{thmA} twice, we first deduce that $d^v\hat K=0$ and then $d^h\hat K=0$. Hence $\hat K$ is constant and the proof sketch is complete.

\subsection{Plan of the Paper}

In Section~\ref{sec:prelim-thmA} we introduce the local symplectic model near the Morse--Bott minimum and collect the preliminaries needed for the proof of Theorem~\ref{thmA}. Section~\ref{sec:limit-period} proves the existence of a limit period for Zoll energy levels converging to the minimum. In Section~\ref{sec:bifurcation-conformality}, we show that the fiberwise Hessian is conformally $\rho$-compatible. This relies on a bifurcation theorem in the case where the linearized flow at the minimum is Besse. In turn, the bifurcation theorem relies on a normal form à la Bottkol, which is proved in Section~\ref{sec:proof-bifurcation}. Section~\ref{sec:conformally-compatible} completes the proof of Theorem~\ref{thmA} by showing that the function relating the fiberwise Hessian and $\rho$ is constant. The proof of Theorem~\ref{thmB} is carried out in Sections~\ref{sec:proof-thmB} and~\ref{sec:drift-analysis}. In Section~\ref{sec:proof-thmB}, we derive a normal form for the low-energy magnetic dynamics. In Section~\ref{sec:drift-analysis}, we use this normal form to analyze the magnetic flow and deduce the curvature rigidity statement in the magnetic setting.

\subsection{Acknowledgements}
G.~B.~gratefully acknowledges support from the Simons Center for Geometry and
Physics, Stony Brook University at which some of the research for this paper was performed during the program “Contact Geometry, General Relativity and Thermodynamics”. G.~B.~warmly thanks Francesco Lin for inspiring discussions around \cite{Don00}. J.~B.~was supported by the Engineering and Physical Sciences Research Council [grant number EP/Z535977/1]. S.S is supported by the DFG through the project SFB/TRR 191 "Symplectic Structures in Geometry,
Algebra and Dynamics", Projektnummer 281071066-TRR 191.
\subsection{Notation}
For a family of objects parametrized by some $\varepsilon>0$, we use the notation $O(\varepsilon^d)$ for $d\geq0$ to mean that the $C^k$-norms are $O(\varepsilon^d)$ as $\varepsilon\to 0$ for some $k\geq 0$. The parameter $k$ will not be explicitly written, and it might depend on the object. However, this does not represent an issue since we assume that the Hamiltonian and symplectic manifold are smooth, and the theorems hold when $k$ is sufficiently large.
\section{Theorem \ref{thmA}, Preliminaries}\label{sec:prelim-thmA}\subsection{The Morse--Bott Lemma}
We begin by recalling the setup of Theorem \ref{thmA}. Let  $(M,\omega)$ be a symplectic manifold with a smooth Hamiltonian $H\colon M\to\mathbb [0,\infty)$ which has a compact, connected symplectic Morse--Bott minimum at $Q:=H^{-1}(0)\subset M$. We denote with $\sigma$ the restriction of $\omega$ to $Q$. Let $\pi\colon E\to Q$ be the symplectic normal bundle of $Q$ and denote with $\rho$ the restriction of $\omega$ to the normal bundle $E\subset TM|_Q$. We denote by $\gamma$ the Hessian of $H$ along $E$ and let
\begin{equation}
\Sigma:=(H^{\gamma})^{-1}(\tfrac12)=\{(q,v)\in E\ |\ \gamma_q(v,v)=1\}.
\end{equation}
We suppose that $M\subset E$ is a small neighborhood of $Q$. In the following we will fiberwise expand several objects in terms of $v$ around the zero section, that is, at $v=0$.

By the symplectic neighborhood theorem, upon changing coordinates, we can assume that $\omega=\omega^{\sigma,\rho,\nabla}$, where $\nabla$ is any affine connection on $E$, see \eqref{symplectic_form_matrix_morse-bott-case}. In particular, \begin{equation}\label{e:sympnabla'}
\omega_{(q,v)}=
\begin{pmatrix}
\sigma_q+O(|v|^2)&O(|v|)\\
O(|v|)&\rho_q
\end{pmatrix}.
\end{equation}
Since $H(q,v)=\tfrac{1}{2}\gamma_q(v,v)+O(|v|^3)$, we can use the formula \eqref{e:sympnabla'} for $\omega$ to expand the horizontal and vertical projections of $X_H(q,v)$ in powers of $v$ as
\begin{equation}
X_H^\pi(q,v)=\zeta_q(v,v)+O\big(|v|^3\big),\qquad X_H^{\nabla}(q,v)=A_qv+O\big(|v|^2\big),
\end{equation}
where $A_qv$ is the linear map defined in \eqref{def_of_A} and $\zeta_q$ is a quadratic form. In the special case in which $\nabla$ preserves $\rho$, formula \eqref{symplectic_form_matrix_morse-bott-case} reduces to \eqref{symplectic_form_matrix_morse-bott-case_b} and we deduce that $\zeta_q(v,v)$ is given by
\begin{equation}\label{eq:zeta-formula}
\sigma_q(\zeta_q(v,v),\eta)=\tfrac12(\nabla_\eta \gamma)_q(v,v),\qquad\forall\,\eta\in T_qQ.
\end{equation}
We fix the notation for the vector field $X_0$ on $E$ given by
\begin{equation}
X_0(q,v):=(A_qv)^v.
\end{equation}

By the Morse--Bott Lemma \cite{Bott,BH} there exists an embedding $F\colon M\to E$ with
\begin{equation}
F(q,v)=\big(q,v+O(|v|^2)\big),\qquad H\circ F=H^\gamma.
\end{equation}
Differentiating this formula, we get
\begin{equation}\label{e:dF}
dF=\begin{pmatrix}
1&0\\
O(|v|^2)&1+O(|v|)
\end{pmatrix}
\end{equation}
in the horizontal-vertical decomposition given by $\nabla$.

\begin{lemma}\label{l:F^*X_H}
Let $F^*X_H$ be the pullback of the Hamiltonian vector field. There exists a vector field $Y$ such that
\begin{equation}
F^*X_H=X_0+Y,
\end{equation}
satisfying the estimates
\begin{equation}
Y^\pi=\zeta_q(v,v)+O(|v|^3)=O(|v|^2),\qquad Y^{\nabla}=O(|v|^2).
\end{equation}
\end{lemma}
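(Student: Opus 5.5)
We set $Y:=F^*X_H-X_0$, so the decomposition holds by definition, and the whole task is to estimate $Y$. The plan is to write the pullback explicitly as
\[
(F^*X_H)(q,v)=(d_{(q,v)}F)^{-1}X_H(F(q,v)),
\]
and then expand the two factors separately in the horizontal-vertical splitting given by $\nabla$.

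\emph{Step 1 (inverse of the differential).} From \eqref{e:dF}, $dF$ is block lower triangular with identity in the horizontal block and $1+O(|v|)$ in the vertical block. For small $|v|$ the inverse is therefore also lower triangular:
\[
(dF)^{-1}=\begin{pmatrix}1&0\\ O(|v|^2)&1+O(|v|)\end{pmatrix}.
\]
The lower-left entry is $-(1+O(|v|))^{-1}O(|v|^2)=O(|v|^2)$.

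\emph{Step 2 (evaluating $X_H$ at $F(q,v)$).} Write $F(q,v)=(q,v')$ with $v'=v+O(|v|^2)$. One subtlety is that the splittings at $(q,v)$ and at $(q,v')$ are different spaces. However, both points lie in the same fiber, so horizontal projections agree, and vertical projections are canonically identified with $E_q$. Hence the block matrix formalism is consistent.

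From the expansions before the lemma:
\[
X_H^\pi(q,v')=\zeta_q(v',v')+O(|v'|^3)=\zeta_q(v,v)+O(|v|^3),
\]
\[
X_H^\nabla(q,v')=A_qv'+O(|v'|^2)=A_qv+O(|v|^2),
\]
using bilinearity of $\zeta_q$ and linearity of $A_q$.

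\emph{Step 3 (combining the two factors).} Apply the matrix from Step 1 to the vector from Step 2.

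The horizontal component is unchanged:
\[
(F^*X_H)^\pi=\zeta_q(v,v)+O(|v|^3).
\]
Since $X_0$ is vertical, this gives $Y^\pi=\zeta_q(v,v)+O(|v|^3)=O(|v|^2)$.

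The vertical component is
\[
O(|v|^2)\cdot\big(\zeta_q(v,v)+O(|v|^3)\big)+(1+O(|v|))\big(A_qv+O(|v|^2)\big)=A_qv+O(|v|^2).
\]
Subtracting $X_0^\nabla=A_qv$ yields $Y^\nabla=O(|v|^2)$.

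The main point requiring care is Step 2. It relies on the expansions of $X_H$ being uniform in $C^k$ with smooth remainders, so that substituting $v'=v+O(|v|^2)$ preserves the orders. This follows from smoothness of $H$, of $\omega$, and of $F$ near the compact zero section; in practice it is routine Taylor expansion. The same argument applies to derivatives, which gives the $C^k$ version of the estimates in the sense of the Notation section.
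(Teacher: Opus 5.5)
Your proposal is correct and follows the paper's proof: invert the lower-triangular block form of $dF$ from \eqref{e:dF} and apply it to the horizontal-vertical expansion of $X_H$. You also handle explicitly the evaluation of $X_H$ at the shifted point $F(q,v)=(q,v+O(|v|^2))$, which the paper leaves implicit.
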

\begin{proof}
By definition, $F^* X_H=dF^{-1}X_H(F)$. We have
\begin{equation}
dF^{-1}=\begin{pmatrix}
1&0\\
O(|v|^2)&1+O(|v|)
\end{pmatrix}.
\end{equation}
Thus, for all $(q,v)\in\Sigma$, we get
\begin{equation}
\begin{aligned}
(F^* X_H)^\pi(q,v)&=\zeta_q(v,v)+O(|v|^3),\\
(F^*X_H)^{\nabla}(q,v)&=A_qv+O(|v|^2).
\end{aligned}
\end{equation}
\end{proof}
The vector field $F^*X_H$ is the Hamiltonian vector field of $H^\gamma$ for the symplectic form $F^*\omega$. For later purposes, we compute this pull-back form.
\begin{lemma}\label{order-of_F_lemma}
The pull-back form $F^*\omega$ is in the same de Rham cohomology class as $\pi^*\sigma$ and it has the expression
\begin{equation}\label{e:Fomega}
F^*\omega=\begin{pmatrix}
        \sigma+O(|v|) &O(|v|) \\
        O(|v|) & \rho+O(|v|)
    \end{pmatrix}.
\end{equation}
with respect to the horizontal-vertical splitting of $\nabla$.
\end{lemma}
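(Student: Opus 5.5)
The proof has two independent parts: the cohomological statement and the pointwise expansion in the horizontal--vertical splitting of $\nabla$.

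\emph{Cohomology class.} I would use that $F$ is homotopic to the identity on the neighborhood $M$. Indeed, $F(q,v)=(q,v+O(|v|^2))$ fixes the zero section pointwise. After shrinking $M$ to a tubular neighborhood that deformation retracts onto $Q$, the maps $F$, $\mathrm{id}_M$, and the composition $\iota_Q\circ\pi$ all induce the same map on de Rham cohomology. This follows from the fiberwise homotopy $F_s(q,v)=s^{-1}F(q,sv)$ for $s\in(0,1]$, which extends smoothly to $F_0=\mathrm{id}$ because $F$ is the identity to first order. Since $\omega=\pi^*\sigma+\tfrac12 d\tau^{\rho,\nabla}$ and $\tau^{\rho,\nabla}$ is a globally defined one-form, $[\omega]=[\pi^*\sigma]$. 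Hence
\[
[F^*\omega]=F^*[\pi^*\sigma]=[(\pi\circ F)^*\sigma]=[\pi^*\sigma],
\]
using $\pi\circ F=\pi$. This last identity even gives the exact primitive directly: $F^*\omega-\pi^*\sigma=\tfrac12 d(F^*\tau^{\rho,\nabla})$.

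\emph{Matrix expression.} I would compute $F^*\omega(\xi,\eta)=\omega_{F(q,v)}(dF\xi,dF\eta)$ by combining two ingredients.
\begin{enumerate}
\item By \eqref{e:sympnabla'}, evaluated at $F(q,v)=(q,v')$ with $v'=v+O(|v|^2)$, the blocks of $\omega$ are $\sigma_q+O(|v|^2)$, $O(|v|)$, and $\rho_q$, since $|v'|=O(|v|)$.
\item By \eqref{e:dF}, $dF$ has block form $\begin{pmatrix}1&0\\ O(|v|^2)&1+O(|v|)\end{pmatrix}$. A horizontal vector $\zeta^h$ is mapped to $\zeta^h+O(|v|^2)^v$, and a vertical vector $w^v$ is mapped to $(w+O(|v|)w)^v$.
\end{enumerate}
One subtlety is that the horizontal and vertical lifts at $(q,v')$ differ from those at $(q,v)$. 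This is harmless, because the splitting is expressed intrinsically through $\xi^\pi$ and $\xi^\nabla$, which are what \eqref{e:dF} records.

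Multiplying the blocks then gives the four entries.
\begin{itemize}
\item The horizontal--horizontal block is $\sigma+O(|v|^2)+O(|v|)\cdot O(|v|^2)+\rho(O(|v|^2),O(|v|^2))$, which is $\sigma+O(|v|)$.
\item The mixed blocks are $O(|v|)(1+O(|v|))+\rho(O(|v|^2),(1+O(|v|))\cdot)$, which is $O(|v|)$.
\item The vertical--vertical block is $\rho((1+O(|v|))\cdot,(1+O(|v|))\cdot)$, which is $\rho+O(|v|)$.
\end{itemize}
This yields \eqref{e:Fomega}. In fact the estimate is somewhat better, but only the stated orders are claimed.

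\emph{Main obstacle.} The only delicate point is bookkeeping in $C^k$-norms as in the Notation section. The statement $F(q,v)=(q,v+O(|v|^2))$ must be understood with derivatives, so that the $O$-terms in \eqref{e:dF} hold in $C^k$. I expect this follows from the smoothness of $F$ together with Taylor's theorem with integral remainder in the fiber variable. The homotopy $F_s$ above requires the same smoothness at $s=0$.
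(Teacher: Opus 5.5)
Your proposal is correct and is essentially the paper's argument. The cohomology claim follows from $[\omega]=[\pi^*\sigma]$ together with $\pi\circ F=\pi$, so the homotopy $F_s$ is superfluous, as your explicit primitive $\tfrac12 d(F^*\tau^{\rho,\nabla})$ already shows; the block expression is exactly the product of the transpose of \eqref{e:dF}, the matrix \eqref{e:sympnabla'}, and \eqref{e:dF}, which the paper simply multiplies out and your entry-by-entry bookkeeping reproduces.
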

\begin{proof}
Since $\omega$ is cohomologous to $\pi^*\sigma$ and $F^*\pi^*\sigma=\pi^*\sigma$, we see that $F^*\omega$ is also cohomologous to $\pi^*\sigma$.
Using equations \ref{e:sympnabla'} and \eqref{e:dF}, we get
\begin{equation*}
F^*\omega=\begin{pmatrix}
1&O(|v|^2)\\
0&1+O(|v|)
\end{pmatrix}\begin{pmatrix}
 \sigma+O(|v|^2)&O(|v|)\\ O(|v|)&\rho   
\end{pmatrix}\begin{pmatrix}
1&0\\
O(|v|^2)&1+O(|v|)
\end{pmatrix}.
\end{equation*}
Performing the matrix multiplication yields the desired equation.
\end{proof}
We showed that $F^*X_H=X_0+O(|v|)$, where $X_0(q,v)=(A_qv)^v$. Since $A_q$ satisfies \eqref{def_of_A} and $\gamma_q$ is positive definite, we conclude that all the eigenvalues of $A_q$ are purely imaginary and non-zero (in particular $\det A_q$ is positive). This means that the dynamics of $X_0$ decomposes as a direct sum of rotations with different speeds. To normalize these speeds independently of $q$, we perform the smooth rescaling
\begin{equation}
{\tilde X}_{H}(q,v):=(\det A_q)^{-\frac{1}{2k}}X_{H}(q,v),
\end{equation}
where $2k$ is the rank of $\pi\colon E\to Q$. If we let
\begin{equation}
\tilde A_q:=(\det A_q)^{-\frac{1}{2k}}A_q,
\end{equation}
and define
\begin{equation}
\tilde X_0(q,v):=(\tilde A_qv)^v,\qquad \tilde\tau_{(q,v)}:=(\det A_q)^{\frac{1}{2k}}\tau_{(q,v)},
\end{equation}
then we get
\begin{equation}\label{e:F^*XHtilde}
F^*{\tilde X}_{H}=\tilde X_0+\begin{pmatrix}
(\det A_q)^{-\frac{1}{2k}}\zeta_q(v,v)+O(|v|^3)\\
O(|v|^2)
\end{pmatrix}=O(|v|^2)
\end{equation}
and the normalizations
\begin{equation}
\tilde\tau_{(q,v)}(\tilde X_0(q,v))=1,\qquad \det \tilde A_q=1,\qquad\forall\,(q,v)\in\Sigma.
\end{equation}
Using the normalization, we see that for every $q\in Q$ the eigenvalues of $\tilde A_q$ are given by \begin{equation}
        \big\{\pm i\tilde a_1(q),\, ,\, \cdots,\, \pm i\tilde a_k(q)\big\}.
    \end{equation}
where $\tilde a_1(q),\ldots,\tilde a_k(q)$ are positive real numbers, depend continuously on $q$ and satisfy
    \begin{equation}\label{normal_ak}
        \tilde a_1(q)\geq \cdots \geq \tilde a_k(q),\qquad \tilde a_1(q)\cdot\ldots\cdot\tilde a_k(q)=1.
    \end{equation}
We call $\tilde a_1(q),\ldots\tilde a_k(q)$ the spectral numbers of $\tilde A_q$. Since some of the spectral numbers might coincide, we define the multiplicities
\begin{equation}
k_{\tilde a}(q):=\big\{\text{number of times $\tilde a$ is among $\tilde a_1(q),\ldots,\tilde a_k(q)$}\big\},\quad\forall\,\tilde a>0.
\end{equation}

We can choose complex coordinates $(z_1,\ldots,z_k)\in \mathbb C^k\cong E_q$ such that the flow of $\tilde X_0$ on $E_q$ is given by
\begin{equation}\label{flow_fully_periodic}
\Phi_{\tilde X_0}^t(z_1,\ldots,z_k)=\big(e^{it\tilde a_1(q)} z_1,\ldots,e^{it\tilde a_k(q)}z_k\big).
\end{equation}
The normalization allows us to say that when the flow of $\tilde X_0$ on $E_q$ is fully periodic, then the periods of non-constant orbits belong to the countable set independent of $q$ given by
\begin{equation}
2\pi\cdot(\mathbb{Q}_+^k)^{\frac{1}{k}}:= \Big\{2\pi\Big(\prod^k_{i=1} r_i\Big)^{\frac{1}{k}} \; \Big\vert \; r_1,\ldots, r_k \in \mathbb{Q}_+ \Big\}
\end{equation}
\begin{lemma}\label{nowhere_dense_lemma_MB}
Let $q\in Q$ be such that the flow of $\tilde X_0$ on $E_q$ is periodic. If $T$ is the period of any periodic orbit of $\tilde X_0$ on $E_q\setminus\{0_q\}$, then
    \begin{equation}
        T \in 2\pi\cdot (\mathbb{Q}_+^k)^{\frac{1}{k}}.
    \end{equation}
\end{lemma}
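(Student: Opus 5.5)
The plan is to exploit the explicit form \eqref{flow_fully_periodic} of the flow of $\tilde X_0$ on $E_q$ together with the normalization \eqref{normal_ak}. The flow on $E_q$ is a direct sum of rotations in the complex coordinates $z_1,\ldots,z_k$ with angular speeds $\tilde a_1(q),\ldots,\tilde a_k(q)$. First I will translate the hypothesis that the flow on $E_q$ is periodic into arithmetic conditions on these speeds, and then use $\prod_j\tilde a_j(q)=1$ to recover $T$.

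Fix a non-constant periodic orbit through $z=(z_1,\ldots,z_k)\neq 0$ with period $T>0$, so that
\[
e^{iT\tilde a_j(q)}z_j=z_j\qquad\text{for all } j .
\]
This orbit alone only constrains the indices with $z_j\neq 0$. So I will use periodicity of the whole flow on $E_q$. Each coordinate axis $\{z_l\neq0,\ z_j=0 \text{ for } j\neq l\}$ is invariant and consists of periodic orbits of minimal period $T_l:=2\pi/\tilde a_l(q)$.

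I also need a common period for every $z$. I will take "the flow on $E_q$ is periodic" to mean that there is a single $T_0>0$ with $\Phi^{T_0}_{\tilde X_0}=\mathrm{id}$ on $E_q$. Applying this to a point with all $z_j\neq 0$ gives
\[
T_0\,\tilde a_j(q)\in 2\pi\mathbb N \qquad\text{for every } j .
\]
Consequently all ratios $\tilde a_j(q)/\tilde a_l(q)$ are positive rationals.

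Now take the given orbit of period $T$, and pick an index $l$ with $z_l\neq0$. Then $T\tilde a_l(q)\in 2\pi\mathbb N$, say $T=2\pi n/\tilde a_l(q)$ with $n\in\mathbb N$. Using the normalization,
\[
\tilde a_l(q)^k=\prod_{j=1}^k\frac{\tilde a_l(q)}{\tilde a_j(q)}=\prod_{j=1}^k r_j,\qquad r_j:=\frac{\tilde a_l(q)}{\tilde a_j(q)}\in\mathbb Q_+ .
\]
Hence
\[
T=2\pi\,n\Big(\prod_j r_j\Big)^{-1/k}=2\pi\Big(\prod_j r_j'\Big)^{1/k},
\]
where $r_1'=n^k/r_1$ and $r_j'=1/r_j$ for $j\ge2$, all positive rationals. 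Therefore $T\in2\pi\cdot(\mathbb Q_+^k)^{1/k}$.

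The only delicate point is the interpretation of the hypothesis: all orbits periodic, as opposed to a uniform period. A merely pointwise hypothesis still suffices, because of the following. Periodicity of the generic orbit, with all $z_j\neq0$, already forces $\tilde a_j(q)/\tilde a_l(q)\in\mathbb Q$: its period $T'$ satisfies $T'\tilde a_j(q)\in2\pi\mathbb N$ for all $j$ simultaneously. So I will phrase the argument through that single generic orbit, which avoids any uniformity issue. The rest is the elementary algebra above.
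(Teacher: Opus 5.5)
Your proof is correct and follows the paper's argument. Both use a common period (the paper's $t(q)$, your $T_0$ or the period of a generic orbit) to make the speeds $\tilde a_j(q)$ commensurable, then use $\prod_j\tilde a_j(q)=1$ to write $T=2\pi d/\tilde a_l(q)$ as $2\pi$ times a $k$-th root of a product of positive rationals; the only cosmetic difference is that you work with the ratios $\tilde a_l(q)/\tilde a_j(q)$ instead of the paper's explicit formula $\tilde a_j(q)=n_j(q)/\big(\prod_i n_i(q)\big)^{1/k}$.
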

\begin{proof}
If all of the orbits of $\tilde X_0$ on $E_q$ are periodic, then by \eqref{flow_fully_periodic} there exists a positive real number $t(q)$ and $n_1(q),n_2(q),\cdots n_k(q) \in \mathbb{N}$ such that
    \begin{equation}
        t(q) \tilde a_j(q) = 2\pi n_j(q),\qquad \forall\,j=1,\ldots,k.
    \end{equation}
    Taking the geometric mean of all these equations and using the normalization \eqref{normal_ak} we get
    \begin{equation}
    t(q)= 2\pi \Big(\prod^k_{i=1}n_i(q)\Big)^{\frac{1}{k}}
    \end{equation}
    Therefore,
    \begin{equation}\label{expression_for_aj}
        \tilde a_j(q)= \frac{n_j(q)}{\big(\prod^k_{i=1}n_i(q)\big)^{\frac{1}{k}}}.
    \end{equation}
    Any periodic orbit of $\tilde X_0$ on $E_q\setminus\{0_q\}$ will have a non-zero component in the $j$-th complex factor for some $j=1,\ldots,k$, see \eqref{flow_fully_periodic}. Thus, if $T$ is the period of the orbit, there is $d\in \mathbb{N}$ such that the following holds:
    \begin{equation}
        T\tilde a_j(q) = 2\pi d
    \end{equation}
    Using \eqref{expression_for_aj} we conclude
    \begin{equation}
        T=2\pi\frac{d}{\tilde a_j(q)}=2\pi \Big(\prod^k_{i=1}\frac{d n_i(q)}{n_j(q)}\Big)^{\frac{1}{k}}\in2\pi\cdot (\mathbb{Q}_+^k)^{\frac{1}{k}}.\qedhere
    \end{equation}
\end{proof}
\subsection{Rescaling to the Unit Bundle}
We take now an affine connection that satisfies 
\begin{equation}
\nabla\gamma=0.
\end{equation}
Let $\varepsilon>0$ and define
\begin{equation}
\Sigma_\varepsilon:=(H^{\gamma})^{-1}(\tfrac12\varepsilon^2).    
\end{equation}
Denote by
\begin{equation}
\Sigma:=\Sigma_1=\{(q,v)\in E\ |\ \gamma_q(v,v)=1\}
\end{equation}
the unit sphere bundle of $E$. Since $\nabla\gamma=0$, for every $\varepsilon>0$ and every $(q,v)\in \Sigma_\varepsilon$, we obtain the splitting
\begin{equation}\label{e:split}
T_{(q,v)}\Sigma_\varepsilon=\mathcal H_{(q,v)}\oplus T_v\Sigma_{\varepsilon,q},
\end{equation}
where $\mathcal H$ is the horizontal distribution of $\nabla$. Since $H^\gamma$ is two-homogeneous, we have a well-defined rescaling 
\begin{equation}
I_\varepsilon\colon \Sigma\to \Sigma_\varepsilon,\qquad I_\varepsilon(q,v)=(q,\varepsilon v)
\end{equation}
which preserves the splitting \eqref{e:split}, and we get
\begin{equation}\label{e:scaling}
d_{(q,v)}I_\varepsilon=\begin{pmatrix}
1&0\\
0&\varepsilon
\end{pmatrix}.
\end{equation}
By the definition of the map $F$, we have 
\begin{equation}
\Sigma_\varepsilon=F^{-1}\big(H^{-1}(\tfrac12\varepsilon^2)\big).
\end{equation}
Therefore, we can pull back the vector field $\tilde X_H$ and the two-form $\omega$ from $H^{-1}(\tfrac12\varepsilon^2)$ to $\Sigma$ using the map $F\circ I_\varepsilon$ and get the following expansion.
\begin{lemma}\label{l:tildeX}
There is a vector field $\tilde Y_\varepsilon$ on $\Sigma$ such that the vector field
\begin{equation}
\tilde X_\varepsilon:=\tilde X_0+\tilde Y_\varepsilon
\end{equation}
satisfies the following three properties.
\begin{enumerate}[(a)]
    \item There is a family of functions \begin{equation}\label{e:fepsilon}
        f_\varepsilon\colon \Sigma\to(0,\infty),\qquad f_\varepsilon(q,v) =(\det A_q)^{\frac{1}{2k}}+O(\varepsilon).
    \end{equation} such that $f_\varepsilon \tilde X_\varepsilon$ is conjugated to $X_H|_{H^{-1}(\frac12\varepsilon^2)}$;
    \item For every $(q,v)\in\Sigma$, we have $\tilde Y_\varepsilon(q,v)\in\ker\tilde\tau=\ker\tau$ and
    \begin{equation}\label{e:tildeY}
    \tilde Y_\varepsilon(q,v)=\begin{pmatrix}
        \varepsilon^2(\det A)^{-\frac{1}{2k}}\zeta_q(v,v)+O(\varepsilon^3)\\
O(\varepsilon)    \end{pmatrix}=\begin{pmatrix}
    O(\varepsilon^2)\\ O(\varepsilon)\end{pmatrix};
        \end{equation}
    \item The vector field $\tilde X_\varepsilon$ generates the kernel of a two-form $\omega_\varepsilon$ on $\Sigma$ which is closed, cohomologous to $\pi^*\sigma$, and has the expression
\begin{equation}
\omega_\varepsilon=
\begin{pmatrix}
        \sigma+O(\varepsilon) &O(\varepsilon^2) \\
        O(\varepsilon^2) & \varepsilon^2\rho+O(\varepsilon^3)
    \end{pmatrix}
\end{equation}
\end{enumerate}
\end{lemma}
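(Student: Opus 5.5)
The plan is to pull everything back to $\Sigma$ through the embedding $G_\varepsilon:=F\circ I_\varepsilon\colon\Sigma\to H^{-1}(\tfrac12\varepsilon^2)$ and read off the orders in $\varepsilon$ from Lemma~\ref{l:F^*X_H}, Lemma~\ref{order-of_F_lemma} and the scaling formula \eqref{e:scaling}. Since $H\circ F=H^\gamma$, $F$ maps $\Sigma_\varepsilon$ onto $H^{-1}(\tfrac12\varepsilon^2)$, so $G_\varepsilon$ is a diffeomorphism onto the energy level. The vector field $F^*X_H$ is tangent to $\Sigma_\varepsilon$ because it is the Hamiltonian vector field of $H^\gamma$ for $F^*\omega$. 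Hence $G_\varepsilon^*X_H=(I_\varepsilon)^*(F^*X_H)$ is a vector field on $\Sigma$. I would write its horizontal and vertical components using \eqref{e:scaling}. The horizontal part is unchanged. The vertical part is divided by $\varepsilon$ and evaluated at $\varepsilon v$. By Lemma~\ref{l:F^*X_H} this gives
\begin{equation*}
G_\varepsilon^*X_H(q,v)=\begin{pmatrix}\varepsilon^2\zeta_q(v,v)+O(\varepsilon^3)\\ A_qv+O(\varepsilon)\end{pmatrix},
\end{equation*}
where I use that the vertical lift at $(q,\varepsilon v)$ pulls back to the vertical lift at $(q,v)$ (this holds since $\nabla\gamma=0$, so $I_\varepsilon$ preserves the splitting).

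Next I would normalize by the function $\tau(G_\varepsilon^*X_H)$. Define $f_\varepsilon:=\tilde\tau\big(G_\varepsilon^*X_H\big)\cdot(\det A_q)^{\frac1{2k}}\cdot(\text{correction})$. More precisely, I set $f_\varepsilon:=\tilde\tau(G_\varepsilon^*X_H)$ computed with $\tilde\tau=(\det A)^{\frac1{2k}}\tau$ and $\tau_{(q,v)}(\xi)=\rho_q(v,\xi^\nabla)$. Since $\tau(X_0)=\rho(v,Av)=\gamma(v,v)=1$ on $\Sigma$, we get $\tilde\tau(\tilde X_0)=1$. A direct check shows that $f_\varepsilon=\tilde\tau\big((\det A)^{-\frac1{2k}}\cdots\big)$. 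The choice should be the one making $\tilde X_\varepsilon:=f_\varepsilon^{-1}G_\varepsilon^*X_H$ satisfy $\tilde\tau(\tilde X_\varepsilon)=1$. This means $f_\varepsilon=\tilde\tau(G_\varepsilon^*X_H)=(\det A_q)^{\frac1{2k}}\tau(A_qv^v)+O(\varepsilon)=(\det A_q)^{\frac1{2k}}+O(\varepsilon)$, which is positive for small $\varepsilon$. Then $\tilde Y_\varepsilon:=\tilde X_\varepsilon-\tilde X_0$ satisfies $\tilde\tau(\tilde Y_\varepsilon)=0$, so $\tilde Y_\varepsilon\in\ker\tau$. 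Its horizontal part is $f_\varepsilon^{-1}\varepsilon^2\zeta+O(\varepsilon^3)=\varepsilon^2(\det A)^{-\frac1{2k}}\zeta_q(v,v)+O(\varepsilon^3)$, and its vertical part is $O(\varepsilon)$. This proves (a) and (b). The conjugacy in (a) is $G_\varepsilon$ itself.

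For (c), I would set $\omega_\varepsilon:=G_\varepsilon^*\omega=I_\varepsilon^*(F^*\omega|_{\Sigma_\varepsilon})$. This form is closed. It is cohomologous to $\pi^*\sigma$ by Lemma~\ref{order-of_F_lemma}, because $I_\varepsilon$ is homotopic to the inclusion and commutes with $\pi$. Its kernel on the hypersurface is spanned by $G_\varepsilon^*X_H$, hence also by $\tilde X_\varepsilon$. Conjugating the matrix \eqref{e:Fomega} evaluated at $|v|=\varepsilon$ by $\mathrm{diag}(1,\varepsilon)$ gives the blocks $\sigma+O(\varepsilon)$, $\varepsilon O(\varepsilon)=O(\varepsilon^2)$ and $\varepsilon^2(\rho+O(\varepsilon))$, as claimed.

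The main obstacle is bookkeeping uniformity: the $O(\cdot)$ terms must hold in $C^k$ on $\Sigma$ uniformly. This follows because the remainders in Lemmas~\ref{l:F^*X_H} and~\ref{order-of_F_lemma} are smooth functions vanishing to the stated order at $v=0$, so after substituting $\varepsilon v$ they become $\varepsilon^j$ times smooth functions of $(q,v,\varepsilon)$.
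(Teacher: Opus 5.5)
Your proposal is correct and follows the paper's proof. You pull back through $F\circ I_\varepsilon$, read off the orders from the scaling $\mathrm{diag}(1,\varepsilon)$, normalize so that $\tilde\tau(\tilde X_\varepsilon)=1$, and take $\omega_\varepsilon:=I_\varepsilon^*F^*\omega$. Your final choice $f_\varepsilon=\tilde\tau(G_\varepsilon^*X_H)$ coincides with the paper's $(\det A)^{\frac{1}{2k}}g_\varepsilon$; the paper just rescales $X_H$ to $\tilde X_H$ before normalizing. Two tidy-ups: drop the abandoned ``(correction)'' definitions of $f_\varepsilon$, and note that the cohomology statement in (c) only needs $\pi\circ I_\varepsilon=\pi$, not a homotopy of $I_\varepsilon$ to an inclusion.
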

\begin{proof}
The vector field $I_\varepsilon^*F^* (\tilde X_H|_{H^{-1}(\frac12\varepsilon^2)})$ is tangent to $\Sigma$ and has the expression
\begin{equation}\label{e:IF}
I_\varepsilon^*F^* \big(\tilde X_H|_{H^{-1}(\frac12\varepsilon^2)}\big)=\tilde X_0+\begin{pmatrix}
\varepsilon^2(\det A)^{-\frac{1}{2k}}\zeta_q(v,v)+O(\varepsilon^3)\\O(\varepsilon)    
\end{pmatrix}.
\end{equation}
thanks to \eqref{e:F^*XHtilde} and \eqref{e:scaling}. Moreover, it satisfies
\begin{equation}\label{e:gepsilon}
g_\varepsilon:=\tilde\tau(I_\varepsilon^*F^*(\tilde X_H|_{H^{-1}(\frac12\varepsilon^2)}))=\tilde\tau( \tilde X_0)+\tau(O(\varepsilon))=1+O(\varepsilon).
\end{equation}
Therefore,
\begin{equation}\label{e:gepsilon2}
g_\varepsilon^{-1}=1+O(\varepsilon)
\end{equation}
and we define
\begin{equation}\label{e:tildeY2}
\tilde Y_\varepsilon:=g_\varepsilon^{-1} I_\varepsilon^*F^*\big(\tilde X_H|_{H^{-1}(\frac12\varepsilon^2)}\big)-\tilde X_0,
\end{equation}
so that $\tilde\tau(\tilde Y_\varepsilon)=0$ and
\begin{equation}
\tilde X_0+\tilde Y_\varepsilon=f_\varepsilon^{-1}I_\varepsilon^*F^*\big(X_H|_{H^{-1}(\frac12\varepsilon^2)}\big),\qquad f_\varepsilon:=(\det A)^{\frac{1}{2k}}g_\varepsilon.  
\end{equation}
Therefore, $f_\varepsilon\tilde X_\varepsilon$ is conjugate to $X_H|_{H^{-1}(\frac12\varepsilon^2)}$. From \eqref{e:gepsilon}, we see that $f_\varepsilon$ satisfies \eqref{e:fepsilon}. From \eqref{e:tildeY2}, \eqref{e:gepsilon2} and \eqref{e:IF}, we see that $\tilde Y_\varepsilon$ satisfies \eqref{e:tildeY}. This shows (a) and (b).

To prove (c), we define
\begin{equation}
\omega_\varepsilon:=I^*_\varepsilon F^*\big(\omega|_{H^{-1}(\frac12\varepsilon^2)}\big).    
\end{equation}
By Lemma \ref{order-of_F_lemma}, we know that $F^*(\omega|_{H^{-1}(\frac12\varepsilon^2)})$ is closed and cohomologous to $\pi^*\sigma$. Hence, $\omega_\varepsilon$ is closed and cohomologous to $I^*_\varepsilon\pi^*\sigma=\pi^*\sigma$.
Since $\tilde X_\varepsilon$ is, up to multiplication, conjugate to $X_H|_{H^{-1}(\frac12\varepsilon^2)}$, we see that the kernel of $\omega_\varepsilon$ is generated by $\tilde X_\varepsilon$. Using the formula for $I_\varepsilon$ and $d I_\varepsilon$, and formula \eqref{e:Fomega} for $F^*\omega$, we obtain the desired formula for $\omega_\varepsilon$.
\end{proof}
 
\section{Theorem \ref{thmA}, Step 1: Existence of a Limit Period}\label{sec:limit-period}

Suppose now that the Hamiltonian $H$ is Zoll along a sequence of energies $\tfrac12\varepsilon_n^2$ converging to zero. This means that the corresponding
sequence of rescaled vector fields $\tilde X_{\varepsilon_n}$ induces a free circle action on $\Sigma$ up to a smooth time reparametrization. Concretely, this means that there is a sequence of continuous functions
\begin{equation}
T_n\colon \Sigma\to(0,\infty)
\end{equation}
assigning to each point on $\Sigma$ the minimal period of the orbit of $\tilde X_{\varepsilon_n}$ passing through it. The key result of this section is the following one.

\begin{theorem}\label{t:T_n}
If $H$ is Zoll along a sequence of energies $\tfrac12\varepsilon_n^2$ converging to zero, then
\begin{enumerate}
\item $\min T_n$ is uniformly bounded away from zero,
\item $\max T_n$ is uniformly bounded from above,
\item $(\max T_n-\min T_n)$ converges to zero. 
\end{enumerate}
\end{theorem}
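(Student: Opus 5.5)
We work throughout on $\Sigma$ with the rescaled fields $\tilde X_{\varepsilon_n}=\tilde X_0+\tilde Y_{\varepsilon_n}$ of Lemma \ref{l:tildeX}. By \eqref{e:tildeY}, these converge to $\tilde X_0$ in $C^1$ (indeed $C^k$) as $n\to\infty$. Two facts drive the argument. First, periods are nearly continuous under $C^1$-perturbation. Second, the possible periods of the limit flow lie in a countable set, which is nowhere dense by Lemma \ref{nowhere_dense_lemma_MB}.

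\emph{Step (1): lower bound.} A $C^1$-bounded vector field on a compact manifold that is nonvanishing has periodic orbits whose period is bounded below. The bound is of the form $c/\|\tilde X\|_{C^1}$, by the standard Yorke-type estimate. Since $\tilde X_{\varepsilon_n}\to\tilde X_0$ in $C^1$ and $\tilde X_0$ is nowhere zero on $\Sigma$, $\min T_n\ge c>0$ uniformly in $n$.

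\emph{Step (2): upper bound.} By Ginzburg--G\"urel and Usher \cite{usher}, every low energy level $H^{-1}(\tfrac12\varepsilon^2)$ carries a periodic orbit whose period for $X_H$ is bounded independently of $\varepsilon$. Transfer this to $\tilde X_{\varepsilon_n}$ using Lemma \ref{l:tildeX}(a): since $f_\varepsilon$ is bounded above and below, we get a point $x_n\in\Sigma$ with $T_n(x_n)\le C$.

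We then use the Zoll structure. The flow of $\tilde X_{\varepsilon_n}$ is a free circle action, so all orbits are freely homotopic and the $\omega_{\varepsilon_n}$-volume of the quotient is fixed. Concretely, the period function $T_n$ equals $\int_{\text{orbit}}\tilde\tau$ up to the exact correction $\omega_{\varepsilon_n}-\varepsilon_n^2 d\tau$. We argue that $T_n$ is nearly constant, so $\max T_n\le C+o(1)$.

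Alternatively, and more robustly, we use the symplectic action. On a Zoll level the action of the principal orbits is constant: for a primitive-type argument, $\omega_\varepsilon$ restricted to $\Sigma$ has kernel the orbits, and the integral of a fixed primitive of $\omega_{\varepsilon}-\pi^*\sigma$ over the free orbits is constant, since any two orbits cobound a cylinder swept by orbits, on which $\omega_\varepsilon$ vanishes. Lemma \ref{l:tildeX}(c) gives $\omega_\varepsilon-\pi^*\sigma=\varepsilon^2 d\tilde\tau'+O(\varepsilon^3)$-type corrections, with $\tilde\tau'$ a fixed multiple of $\tau$. Along an orbit of $\tilde X_{\varepsilon_n}$ we have $\tilde\tau(\tilde X_{\varepsilon_n})=1$, so $\varepsilon^{-2}\cdot$action equals the period plus $O(\varepsilon)\cdot$period. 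Hence the periods of any two orbits agree up to a factor $1+O(\varepsilon_n)$. Combined with the short orbit at $x_n$, this yields (2) and (3) simultaneously.

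\emph{Backup for (3).} If the action argument has gaps (e.g.\ the pullback of $\pi^*\sigma$ over the connecting cylinder, or the relation between $\tilde\tau$ and $\tau$ when $\det A$ is not constant), we use a compactness argument instead. With (1) and (2), pass to a subsequence with $\min T_n\to T^-$ and $\max T_n\to T^+$. Periodic orbits of $\tilde X_{\varepsilon_n}$ of period $T_n(x)$ converge to periodic orbits of $\tilde X_0$. Using connectedness of $\Sigma$ and continuity of $T_n$, every $T\in[T^-,T^+]$ is then a limit of periods $T_n(x_n^T)$, hence a period, not necessarily minimal, of some closed orbit of $\tilde X_0$ on a fiber $E_q$. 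The periods of $\tilde X_0$ on fibers where it is not fully periodic also need control: they are of the form $2\pi d/\tilde a_j(q)$. So we need that the set of all periods of $\tilde X_0$ is nowhere dense, or at least contains no interval. This step, ensuring that a whole interval of periods forces an open set of $q$ with periodic linear flows and contradicts Lemma \ref{nowhere_dense_lemma_MB}, is the main obstacle. We expect to handle it by noting that limits of Zoll orbits along a whole interval force the fibers involved to be fully periodic, and then applying the countability in Lemma \ref{nowhere_dense_lemma_MB}.
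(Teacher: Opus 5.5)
\textbf{Verdict: there is a genuine gap.} Your Step (1) matches the paper. So does your use of Ginzburg--G\"urel/Usher with Lemma~\ref{l:tildeX}(a) to get an orbit of $\tilde X_{\varepsilon_n}$ of uniformly bounded period. What fails is the passage from one short orbit to bounds on all of $T_n$.

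\textbf{The action argument.} Let $\lambda_\varepsilon$ be a primitive of $\omega_\varepsilon-\pi^*\sigma$, and let $C$ be a cylinder swept by orbits between $\gamma_0$ and $\gamma_1$. Only $\omega_\varepsilon$ vanishes on $C$, so
$\int_{\gamma_1}\lambda_\varepsilon-\int_{\gamma_0}\lambda_\varepsilon=-\int_C\pi^*\sigma$.
Orbits project to loops in $Q$ of length $O(\varepsilon^2)$ times the period. Hence this term is a priori of order $\varepsilon^2$, which is exactly the order of the action itself. So the action is not constant at the scale you need.

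There is a second problem. A natural primitive is $\tfrac{\varepsilon^2}{2}\tau$ up to higher order, and $\tau(\tilde X_0)=(\det A_q)^{-\frac{1}{2k}}$. Even a constant action would therefore only control the weighted time $\int(\det A_q)^{-\frac{1}{2k}}\,dt$ along orbits, not the period. Knowing that this weight is constant is precisely what Steps 2--3 of the paper derive \emph{from} this theorem. So (2) is left unproved, your first ``Zoll structure'' paragraph is only an assertion, and your backup for (3) presupposes (2).

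\textbf{The obstacle you isolate at the end.} It is the real one. The set of all periods $2\pi d/\tilde a_j(q)$ of $\tilde X_0$, taken over all fibers, can contain intervals. An interval of limit periods therefore gives no contradiction by itself. Your proposed remedy (``limits along a whole interval force the fibers involved to be fully periodic'') is not an argument.

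\textbf{How the paper closes the gap.} It proves (2) and (3) by the same contradiction argument, localized at a single fiber.
\begin{enumerate}
\item Take an interval $I$ of positive length lying between a short and a long period for all $n$. For (2) these are the Ginzburg--G\"urel--Usher orbit and an orbit with period $\to\infty$; for (3) they are $\min T_n$ and $\max T_n$. Connectedness of $\Sigma$ and continuity of $T_n$ realize every $T\in I$ as a minimal period.
\item Choose $T\in I\setminus 2\pi(\mathbb Q_+^k)^{1/k}$. The orbits $y_n$ of period $T$ converge to a $T$-periodic $\tilde X_0$-orbit in some fiber $\Sigma_q$. By Lemma~\ref{nowhere_dense_lemma_MB}, this fiber is \emph{not} fully periodic.
\item Fix $v\in\Sigma_q$ whose $\tilde X_0$-orbit is not closed, so that $T_n(q,v)\to\infty$. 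Choose $S\ge T$ outside the countable set $\{2\pi m/\tilde a_j(q)\}$ of periods on this one fiber.
\item Apply the intermediate value theorem on the connected set $\Sigma|_{B_{d_n}(q)}$, with $d_n:=d(\pi(y_n(0)),q)\to 0$. This gives points with $T_n=S$ accumulating on $\Sigma_q$, hence an $S$-periodic $\tilde X_0$-orbit in $\Sigma_q$, a contradiction.
\end{enumerate}
The idea your proposal is missing is this two-stage choice. First avoid the global countable set, to land on a non-periodic fiber. Then avoid that single fiber's countable set of periods, using a non-closed orbit there whose period blows up.
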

\begin{proof}
Property (1) follows from the fact that the limit vector field $\tilde X_0$ is nowhere vanishing \cite[Chapter 4, Proposition 1]{Hofer_Zehnder_book}. 

Let us prove Property (2) by contradiction. Assume that, up to taking a subsequence, $\max T_n\to\infty$. This means that there is a sequence $z_n$ of periodic orbits of $\tilde X_{\varepsilon_n}$ such that the corresponding sequence of minimal periods $\tilde T_n$ satisfies $\tilde T_n\to\infty$.

By the work of Ginzburg--Gürel \cite{Ginzburg_gurel} and Usher \cite{usher}, for every number $\varepsilon>0$ there exists a periodic orbit of $X_H$ on $\Sigma_\varepsilon$ with period bounded above uniformly in $\varepsilon$. Indeed, the existence of such a periodic orbit is proven in \cite[Theorem 1.1]{Ginzburg_gurel} if the first Chern class $c_1(TM)$ satisfies: $c_1(TM)\neq [0] \in H^2(M,\mathbb{R})$ or $c_1(TM) =[0] \in H^2(M,\mathbb{R})$ and the symplectic manifold $(Q,\sigma)$ is spherically rational. We recall that the the symplectic manifold $(Q,\sigma)$ is called spherically rational if the set of periods of $\sigma$ over spheres in $Q$ is discrete or equivalently, the following holds (c.f \cite[pp. 866]{Ginzburg_gurel}) :
\[
\lambda_0:=\inf \{\vert \langle\sigma,u \rangle\vert\;\vert \; u\in \pi_2(Q), \langle \sigma,u\rangle \neq 0 \}>0
\]
This assumption of spherical rationality of $Q$ when $c_1(TM)=[0] \in H^2(M,\mathbb{R})$ is removed in \cite[Theorem 1.4]{usher}. Since $f_\varepsilon=(\det A_q)^{\frac{1}{2k}}+O(\varepsilon)$ and $f\varepsilon\tilde X_\varepsilon$ is conjugated to $X_H|_{H^{-1}(\frac{1}{2}\varepsilon^2)}$ by Lemma \ref{l:tildeX}.(a), we also conclude that for all $\varepsilon>0$ there exists a periodic orbit of $\tilde X_\varepsilon$ with period bounded above uniformly in $\varepsilon$. Thus there is a positive real number $\tilde S$ and a sequence $w_n$ of periodic orbits of $\tilde X_{\varepsilon_n}$ such that the corresponding sequence of minimal periods $\tilde S_n$ satisfies $\tilde S_n\leq \tilde S$.  Up to taking a further subsequence, there is an interval $I$ independent of $n$ which has positive length and is contained in the interval between $\tilde S_n$ and $\tilde T_n$ for all $n$. Since $\Sigma$ is connected and the function $T_n$ is continuous, by the Intermediate Value Theorem, we deduce that for every $T\in I$, there is a periodic orbit $y_n$ of period $T$ of $\tilde X_{\varepsilon_n}$. Passing to the limit and possibly taking a subsequence, the continuous dependence of solutions to ordinary differential equations on the initial condition tells us that $y_n$ uniformly converges to a periodic orbit $y\colon\mathbb R\to \Sigma$ for $\tilde X_0$ with (possibly not minimal) period $T\in I$. Choosing $T$ to be outside the countable set $2\pi\cdot(\mathbb Q_+^k)^\frac{1}{k}$ (which is possible since $I$ is an interval of positive length), we deduce that $y$ is contained in some $\Sigma_q$ where not all orbits of $\tilde X_0$ are periodic.

Choose a number $S\geq T$ such that no orbit of $\tilde X_0$ on $\Sigma_q$ has a period (minimal or not minimal) equal to $S$. This is possible since the set of periods belongs to the countable set
\begin{equation}
\Big\{\frac{2\pi m}{\tilde a_j(q)}\ \Big|\ m\in\mathbb N,\ j=1,\ldots,k \Big\}.
\end{equation}
Fix $v\in \Sigma_q$ with the property that the orbit of $\tilde X_0$ with initial condition $(q,v)$ is not periodic. Let $x_n$ be the periodic orbit of $\tilde X_{\varepsilon_n}$ such that $x_n(0)=(q,v)$ and denote by $\tilde R_n$ its minimal period. From the choice of $v$, we deduce that $\lim_{n\to\infty}\tilde R_n=\infty$ and therefore we can assume up to taking a subsequence that $\tilde R_n\geq S$ for all $n$. Let us define $(q_n,v_n):=y_n(0)$, where $y_n$ is the orbit of $\tilde X_{\varepsilon_n}$ defined above. Thus the distance $d_n:=d(q_n,q)$ is converging to $0$. Let $B_{d_n}(q)$ be the closed ball in the manifold $Q$ with center $q$ and radius $d_n$. Since $x_n(0)=(q,v)$ and $y_n(0)=(q_n,v_n)$ both belong to the connected set $\Sigma|_{B_{d_n}(q)}$ and $T_n(q_n,v_n)=T\leq S\leq R_n=T_n(q,v)$, we again deduce from the Intermediate Value Theorem that there exists $(\tilde q_n,\tilde v_n)\in \Sigma|_{B_{d_n}(q)}$ such that $T_n(\tilde q_n,\tilde v_n)=S$ for all $n$. Up to taking a subsequence, $(\tilde q_n,\tilde v_n)\to (q,\tilde v)$ and the orbit of $\tilde X_0$ with initial condition $(q,\tilde v)$ has period $S$, contradicting the choice of $S$.

The proof of Property (3) is very similar to Property (2). Indeed, assume by contradiction that Property (3) does not hold. Thus, up to taking a subsequence, there exists $\delta>0$ such that $\max T_n-\min T_n\geq \delta$ for all $n$. This means that there are periodic orbits $w_n$ and $z_n$ of $\tilde X_{\varepsilon_n}$ having periods $\tilde S_n$ and $\tilde T_n$ such that $\tilde T_n-\tilde S_n\geq \delta$. By Property (1) and (2), up to taking a subsequence, we can suppose that $\tilde S_n\to S>0$ and $\tilde T_n\to \tilde T$. Therefore, upon taking a further subsequence, there is an interval $I$ independent of $n$ which has positive length and is contained in the interval between $\tilde S_n$ and $\tilde T_n$ for all $n$. The rest of the argument is as in the proof of Property (2).
\end{proof}
\begin{corollary}\label{c:T}
Assume that $H$ is Zoll along a sequence of energies $\tfrac12\varepsilon^2_n$ converging to a closed, connected, symplectic Morse--Bott minimum and let $T_n\colon \Sigma\to(0,\infty)$ be the corresponding sequence of period functions of $\tilde X_{\varepsilon_n}$. Then there is a positive number $T>0$ and a subsequence $n_\ell$ such that $T_{n_\ell}$ converges uniformly to $T$. As a consequence, 
\begin{enumerate}
    \item the flow of $\tilde X_0$ is $T$-periodic, that is, $\Phi_{\tilde X_0}^T=\mathrm{id}_\Sigma$;
    \item the eigenvalues $\tilde a_1,\ldots,\tilde a_k\colon Q\to(0,\infty)$ and, for all $\tilde a>0$, the multiplicities $k_{\tilde a}\colon Q\to\mathbb N$ are constant functions on $Q$.
\end{enumerate}
\end{corollary}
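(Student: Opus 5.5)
The plan is to combine the three properties of Theorem~\ref{t:T_n} with compactness, and then pass periodic orbits to the limit using continuous dependence on parameters. By properties (1) and (2), the numbers $\min T_n$ lie in a compact interval $[c,C]\subset(0,\infty)$. We extract a subsequence $n_\ell$ such that $\min T_{n_\ell}\to T$ for some $T\in[c,C]$; in particular $T>0$. Property (3) gives $\max T_{n_\ell}-\min T_{n_\ell}\to0$. Since $\min T_n\le T_n\le\max T_n$ pointwise,
\begin{equation*}
\sup_\Sigma|T_{n_\ell}-T|\le |\min T_{n_\ell}-T|+(\max T_{n_\ell}-\min T_{n_\ell})\to0,
\end{equation*}
which is the claimed uniform convergence.

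For (1), fix $z\in\Sigma$. By Lemma~\ref{l:tildeX}(b), $\tilde X_{\varepsilon}\to\tilde X_0$ in $C^0$ (indeed $\tilde Y_\varepsilon=O(\varepsilon)$), and all these fields live on the compact manifold $\Sigma$. Continuous dependence of solutions on the vector field, uniformly on the compact time interval $[0,C+1]$, then gives
\begin{equation*}
\Phi^{T_{n_\ell}(z)}_{\tilde X_{\varepsilon_{n_\ell}}}(z)\to\Phi^{T}_{\tilde X_0}(z),
\end{equation*}
using also $T_{n_\ell}(z)\to T$. The left-hand side equals $z$ for every $\ell$, since $T_{n_\ell}(z)$ is the period of the orbit through $z$. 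Hence $\Phi^T_{\tilde X_0}(z)=z$ for all $z$, that is, $\Phi^T_{\tilde X_0}=\mathrm{id}_\Sigma$.

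For (2), the flow of $\tilde X_0$ preserves fibers, and on $E_q$ it is given by \eqref{flow_fully_periodic}. Applying $T$-periodicity to a vector in the $j$-th complex line gives $T\tilde a_j(q)\in2\pi\mathbb N$ for every $j$ and every $q$. So each $\tilde a_j(q)$ lies in the discrete set $\tfrac{2\pi}{T}\mathbb N$. The functions $\tilde a_j$, ordered decreasingly as in \eqref{normal_ak}, are continuous, being ordered roots of the characteristic polynomial of the continuous family $\tilde A_q$. A continuous function on the connected manifold $Q$ with values in a discrete set is constant, so each $\tilde a_j$ is constant. Consequently $k_{\tilde a}(q)=\#\{j:\tilde a_j=\tilde a\}$ is constant for every $\tilde a>0$.

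The main obstacle is the limiting argument in (1). One must make sure the convergence $\tilde X_{\varepsilon_n}\to\tilde X_0$ is strong enough, namely $C^0$ on compact $\Sigma$ with uniform Lipschitz bounds, to get uniform convergence of flows on bounded time intervals; the $O(\cdot)$ convention in the Notation, meaning $C^k$ bounds, provides this. The continuity of the ordered spectral numbers in (2) is standard.
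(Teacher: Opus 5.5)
Your proposal is correct and follows the paper's proof essentially step by step. The subsequence and the uniform convergence come from the three properties of Theorem~\ref{t:T_n}, the identity $\Phi_{\tilde X_0}^T=\mathrm{id}_\Sigma$ comes from continuous dependence of flows, and constancy of the spectral numbers comes from $T\tilde a_j(q)\in 2\pi\mathbb N$ together with continuity on the connected manifold $Q$. Your version only makes explicit the compactness extraction and the triangle-inequality estimate that the paper leaves implicit.
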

\begin{proof}
The existence of the subsequence $n_\ell$ and of the positive number $T>0$ follows by Theorem \ref{t:T_n}. In particular, for all $(q,v)\in\Sigma$, $\Phi_{\tilde X_{\varepsilon_n}}^{T_n(q,v)}(q,v)=(q,v)$. By the continuous dependence of the solutions to ordinary differential equations on the initial conditions, it follows that $\Phi_{\tilde X_0}^T(q,v)=(q,v)$. Finally, by the explicit formula for $\Phi_{\tilde X_0}^T(q,v)$, we see that there exist natural numbers $n_1(q),\ldots,n_k(q)$ such that 
\begin{equation}
T\tilde a_j(q)=2\pi n_j(q),\qquad \forall\,j=1,\ldots,k.
\end{equation}
Since the functions $\tilde a_j$ are continuous, it follows that the functions $n_j$ are continuous and hence constant on the connected manifold $Q$. Thus, also the functions $\tilde a_j$ are constant on $Q$. Since for every $q\in Q$ the endomorphisms $\tilde A_q$ are diagonalizable and their eigenvalues are constant in $q$, it follows that the multiplicities of the eigenvalues are constant on $Q$, as well.
\end{proof}
\section{Theorem \ref{thmA}, Step 2: Bifurcation Implies Conformality}\label{sec:bifurcation-conformality}
\subsection{The Submanifold of Orbits with Minimal Period}
Recall that
\begin{equation}
\Phi^t_{\tilde X_0}(q,v)=(q,e^{t\tilde A_q}v),\qquad\forall\,t\in\mathbb R,\ \forall\,(q,v)\in E,
\end{equation}
where $\tilde A_q:=(\det A_q)^{-\frac{1}{2k}}A_q$. We have shown in the previous section that there is a number $T>0$ such that $\Phi^T_{\tilde X_0}=\mathrm{id}_\Sigma$ and all the spectral numbers $\tilde a_1\geq \ldots\geq \tilde a_k$ and the multiplicities $k_{\tilde a}$ for all $\tilde a>0$ are constant on $Q$. Let us start by drawing two useful conclusions from these facts.
\begin{lemma}\label{l:connection}
The flow $\Phi_{\tilde X_0}$ preserves $\rho$ and $\gamma$. Moreover, there exists a connection $\nabla$ on $E$ with the following two properties.
\begin{enumerate}[(a)]
    \item The connection $\nabla$ preserves $\tilde A$ and $\gamma$. In particular, 
\begin{equation}\label{e:dphi}
d_{(q,v)}\Phi^t_{\tilde X_0}=\begin{pmatrix}
1&0\\
0& e^{t\tilde A_q}
\end{pmatrix},\qquad  \mathcal H|_\Sigma\subset T\Sigma,
\end{equation}
where the block decomposition is with respect to the horizontal-vertical splitting.
    \item The flow $\Phi_{\tilde X_0}$ preserves $\tau$ and $\tilde \tau$.
\end{enumerate}
\end{lemma}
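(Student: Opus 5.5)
First, the invariance of $\rho$ and $\gamma$ under $\Phi_{\tilde X_0}$ is fiberwise linear algebra. On each fiber the flow is $v\mapsto e^{t\tilde A_q}v$, and $\tilde A_q$ is a positive multiple of $A_q\in\mathfrak{sp}(\rho_q)$, so $e^{t\tilde A_q}$ is $\rho_q$-symplectic. For $\gamma$ we have $\gamma_q(w,v)=\rho_q(w,A_qv)$, and $A_q$ commutes with $e^{t\tilde A_q}$. Hence
\begin{equation*}
\gamma_q(e^{t\tilde A_q}w,e^{t\tilde A_q}v)=\rho_q(e^{t\tilde A_q}w,e^{t\tilde A_q}A_qv)=\gamma_q(w,v).
\end{equation*}

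For (a), I will use Corollary \ref{c:T}. The spectral numbers $\tilde a_j$ and their multiplicities are constant on $Q$, so $E$ splits smoothly as $E=\bigoplus_{\tilde a}E_{\tilde a}$. Here $E_{\tilde a}:=\ker(\tilde A^2+\tilde a^2)$ is a smooth subbundle of constant rank $2k_{\tilde a}$, since it is the kernel of a smooth bundle map of constant rank. On each $E_{\tilde a}$ the endomorphism $J_{\tilde a}:=\tilde a^{-1}\tilde A|_{E_{\tilde a}}$ is a complex structure. The summands are mutually $\gamma$-orthogonal, because $\tilde A$ is $\gamma$-antisymmetric: $\gamma(w,\tilde Av)=\rho(w,A\tilde Av)$ and $A\tilde A$ is $\rho$-symmetric. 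Also $\gamma|_{E_{\tilde a}}$ is $J_{\tilde a}$-invariant, so $(E_{\tilde a},\gamma,J_{\tilde a})$ is a Hermitian vector bundle. I will choose a unitary connection on each summand (using local unitary frames and a partition of unity, which preserves the convexity of the condition) and take $\nabla$ to be their direct sum. Then $\nabla\gamma=0$ and $\nabla\tilde A=0$, since $\tilde A=\bigoplus\tilde a J_{\tilde a}$ has constant coefficients $\tilde a$.

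I then compute the differential of the flow. Parallel transport commutes with $e^{t\tilde A}$, so the flow $\Phi^t_{\tilde X_0}$ maps horizontal curves to horizontal curves. It therefore preserves $\mathcal H$ and acts as the identity on horizontal projections, while it acts by $e^{t\tilde A_q}$ on vertical vectors. This gives \eqref{e:dphi}. The inclusion $\mathcal H|_\Sigma\subset T\Sigma$ follows from $\nabla\gamma=0$, because parallel transport preserves $\gamma$-length. Since $\nabla\tilde A=0$ and $\nabla\gamma=0$, we also get $\nabla A=0$, and hence $\nabla\rho=0$. This is a side fact that is useful for (b).

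For (b), I use the vertical projection $\xi^\nabla$ of the pushforward of $\xi$ by $d\Phi^t$, which by \eqref{e:dphi} is $e^{t\tilde A_q}\xi^\nabla$. Then
\begin{equation*}
(\Phi^{t*}\tau)_{(q,v)}(\xi)=\rho_q(e^{t\tilde A_q}v,e^{t\tilde A_q}\xi^\nabla)=\rho_q(v,\xi^\nabla)=\tau_{(q,v)}(\xi),
\end{equation*}
because $e^{t\tilde A_q}$ is $\rho_q$-symplectic. Moreover, $\tilde\tau$ is $\tau$ multiplied by the function $(\det A_q)^{1/2k}$, which is pulled back from $Q$ and therefore flow-invariant, since the flow covers the identity on $Q$. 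This proves (b).

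The only delicate point is the smoothness of the eigenbundle splitting, and that is exactly what the constancy of the multiplicities from Corollary \ref{c:T} provides.
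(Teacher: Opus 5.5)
Your proof is correct, but it constructs the connection differently from the paper.

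\textbf{The paper's route.} The paper takes any $\gamma$-metric connection $\nabla'$ and averages it over one period of the linear flow,
\begin{equation*}
\nabla:=\frac{1}{T}\int_0^T(\Phi^t_{\tilde X_0})^*\nabla'\,dt.
\end{equation*}
The averaged connection is invariant under every $e^{t\tilde A}$, hence $\nabla\tilde A=0$. It remains $\gamma$-metric because $\Phi_{\tilde X_0}$ preserves $\gamma$. This uses only the periodicity $\Phi^T_{\tilde X_0}=\mathrm{id}_\Sigma$ from Corollary \ref{c:T}(1), and it needs no discussion of eigenbundles or their smoothness.

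\textbf{Your route.} You use Corollary \ref{c:T}(2), the constancy of the spectral numbers and multiplicities. This lets you split $E$ smoothly into the Hermitian subbundles $(E_{\tilde a},\gamma,J_{\tilde a})$ and take a direct sum of unitary connections. The argument is more hands-on and makes the structure $\tilde A=\bigoplus_{\tilde a}\tilde a\,J_{\tilde a}$ explicit. It relies on constancy of the spectrum rather than on periodicity of the flow. You also justify \eqref{e:dphi} and $\mathcal H|_\Sigma\subset T\Sigma$ via parallel transport commuting with $e^{t\tilde A}$, which the paper only asserts. The invariance of $\rho$, $\gamma$, $\tau$ and $\tilde\tau$ is handled essentially as in the paper.

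\textbf{A remark to delete.} You claim that $\nabla\tilde A=0$ and $\nabla\gamma=0$ give $\nabla A=0$ and hence $\nabla\rho=0$. This is false. Since $A=(\det A)^{\frac{1}{2k}}\tilde A$ and $\rho(w,v)=\gamma(w,A^{-1}v)$, your connection satisfies
\begin{equation*}
\nabla A=d\big((\det A)^{\frac{1}{2k}}\big)\otimes\tilde A,\qquad \nabla\rho=-\tfrac{1}{2k}\,d\log(\det A)\otimes\rho.
\end{equation*}
Both vanish only if $\det A$ is constant, and at this stage that is not known. It is exactly what Section \ref{sec:conformally-compatible} proves, using a different connection with $\nabla\rho=0$ and $\nabla J=0$, for which $\nabla\gamma\neq0$ in general.

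Your proof of (b) never uses this remark. It only needs that each $e^{t\tilde A_q}$ is $\rho_q$-symplectic, together with \eqref{e:dphi}. So the argument stands once the remark is removed.
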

\begin{proof}
First, recall that we are considering a connection $\nabla$ on $E$ preserving $\gamma$. To show that $\Phi_{\tilde X_0}$ preserves $\rho$ and $\gamma$, it is enough to prove that $A_q$ is symmetric with respect to $\rho$ and antisymmetric with respect to $\gamma$. Both properties follow from \eqref{def_of_A}, which implies
\begin{equation}
\rho_q(w,A_qv)=\gamma_q(w,v),\qquad \gamma_q(A_qw,v)=\rho_q(A_qw,A_qv).
\end{equation}
Let $\nabla'$ be a connection which preserves $\gamma$. Since $\Phi_{\tilde X_0}$ generates an action of period $T$, we can average the connection $\nabla'$ by
\begin{equation}
\nabla:=\frac{1}{T}\int_0^T (\Phi^t_{\tilde X_0})^*\nabla' dt.
\end{equation}
The averaged connection is invariant under $\Phi_{\tilde X_0}$ and hence $\nabla$ preserves $\tilde A$.
Moreover, since $\gamma$ was $\nabla'$-parallel and $\Phi_{\tilde X_0}$ preserves $\gamma$, we conclude that $\gamma$ is also $\nabla$-parallel. Finally, using \eqref{e:dphi}, we get
\begin{equation}
\big((\Phi_{\tilde X_0}^t)^*\tau\big)_{(q,v)}(\xi)=\rho_q(e^{t\tilde A_q}v,e^{t\tilde A_q}\xi^\nabla)=\rho_q(v,\xi^\nabla)    
\end{equation}
and similarly for $\tilde \tau$.
\end{proof}
In the second preliminary result, we describe the set of orbits $\Sigma_{\min}$ of $\tilde X_0$ with minimal period more closely. Here, we define 
\begin{equation}
    \Sigma_{\min}:=\big\{(q,v)\in\Sigma\ \big|\ \Phi_{\tilde X_0}^{T_{\min}}(q,v)=(q,v)\big\},
\end{equation}
and
\begin{equation}
T_{\min}:=\frac{2\pi}{\tilde a_1}
\end{equation}
is the minimal period of a periodic orbit of $\Phi_{\tilde X_0}$.
\begin{lemma}\label{l:minsub}
The set $\Sigma_{\min}$ is an embedded submanifold of $\Sigma$ and the restriction of the projection $\pi\colon \Sigma_{\min}\to Q$ is a smooth $S^{2k_{\tilde a_1}-1}$ bundle. Moreover, \begin{enumerate}[(a)]
    \item $\Sigma_{\min}$ is non-degenerate for the flow of $\tilde X_0$, that is,
\begin{equation}\label{e:sigmanondeg}
\ker \Big(d_{(q,v)}\Phi_{\tilde X_0}^{T_{\min}}-1_{T_{(q,v)}\Sigma}\Big)=T_{(q,v)}\Sigma_{\min},\qquad \forall\,(q,v)\in\Sigma_{\min};
\end{equation}
\item there are $\Phi_{\tilde X_0}$-invariant splittings
\begin{equation}\label{e:Tsigmamin}
\begin{aligned}
T_{(q,v)}\Sigma_{\min}&=\mathcal H_{(q,v)}\oplus T_v\Sigma_{\min,q},\\ 
T_{(q,v)}\Sigma|_{\Sigma_{\min}}&=T_{(q,v)}\Sigma_{\min}\oplus (T_v\Sigma_{\min,q})^\perp,
\end{aligned}    
\end{equation}
where $(T_v\Sigma_{\min,q})^\perp$ denotes the $\gamma_q$-orthogonal of $T_v\Sigma_{\min,q}$ inside $T_v\Sigma_q$.
\end{enumerate}
\end{lemma}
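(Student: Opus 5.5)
The plan is to describe $\Sigma_{\min}$ fiberwise through the eigenspace decomposition of $\tilde A$, and then use the connection of Lemma \ref{l:connection} to glue the fibers together smoothly. By Corollary \ref{c:T}, the spectral numbers and multiplicities are constant on $Q$. Hence for each $q$ we have the $\gamma_q$-orthogonal, $\tilde A_q$-invariant splitting
\begin{equation*}
E_q=\bigoplus_{\tilde a} E_q^{\tilde a},\qquad E_q^{\tilde a}:=\ker\big(\tilde A_q^2+\tilde a^2\big),
\end{equation*}
and each $E^{\tilde a}$ has constant rank $2k_{\tilde a}$. It is therefore a smooth subbundle of $E$, being the kernel of a smooth bundle endomorphism of constant rank.

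The first step is to identify $\Sigma_{\min}$. Since $\tilde a_1$ is the largest spectral number, $T_{\min}\tilde a=2\pi\tilde a/\tilde a_1$ is a multiple of $2\pi$ only for $\tilde a=\tilde a_1$. From the explicit formula \eqref{flow_fully_periodic} we get
\begin{equation*}
\Sigma_{\min}=\Sigma\cap E^{\tilde a_1}.
\end{equation*}
This is the $\gamma$-unit sphere bundle of a smooth vector bundle of rank $2k_{\tilde a_1}$. So it is an embedded submanifold, and $\pi|_{\Sigma_{\min}}$ is a smooth $S^{2k_{\tilde a_1}-1}$-bundle.

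The second step is the splittings in (b). By Lemma \ref{l:connection}(a), $\nabla$ preserves $\tilde A$ and $\gamma$. It therefore preserves $E^{\tilde a_1}$, because parallel transport commutes with $\tilde A$ and so maps eigenspaces to eigenspaces. Consequently the horizontal lift at $(q,v)$ with $v\in E_q^{\tilde a_1}$ is tangent to $\Sigma_{\min}$. Indeed, it is the velocity of the parallel transport of $v$, which stays in $E^{\tilde a_1}$ and keeps $\gamma$-norm one. This gives $T\Sigma_{\min}=\mathcal H\oplus T_v\Sigma_{\min,q}$ for dimensional reasons.

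The second splitting follows from $T\Sigma=\mathcal H\oplus T_v\Sigma_q$, using \eqref{e:split} and $\nabla\gamma=0$, after splitting $T_v\Sigma_q$ $\gamma_q$-orthogonally. Explicitly, $(T_v\Sigma_{\min,q})^\perp=\bigoplus_{\tilde a\neq\tilde a_1}E_q^{\tilde a}$. Invariance under the flow comes from \eqref{e:dphi}: $d\Phi^t$ is the identity on $\mathcal H$ and $e^{t\tilde A_q}$ vertically. The map $e^{t\tilde A_q}$ preserves each $E_q^{\tilde a}$, and it preserves $\gamma$ by Lemma \ref{l:connection}.

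The last step is non-degeneracy (a), which I would read off \eqref{e:dphi} with $t=T_{\min}$. On $\mathcal H$, the map $d\Phi^{T_{\min}}-1$ vanishes. On $E_q^{\tilde a_1}$, the map $e^{T_{\min}\tilde A_q}$ is the identity, since $e^{2\pi i}=1$. On $E_q^{\tilde a}$ with $\tilde a<\tilde a_1$, the eigenvalues are $e^{\pm 2\pi i\tilde a/\tilde a_1}\neq 1$, so $e^{T_{\min}\tilde A_q}-1$ is invertible there. Hence the kernel inside $T_{(q,v)}\Sigma=\mathcal H\oplus T_v\Sigma_q$ is exactly $\mathcal H\oplus(T_v\Sigma_q\cap E_q^{\tilde a_1})=T_{(q,v)}\Sigma_{\min}$.

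The only delicate point is the smoothness of $E^{\tilde a_1}$ and its $\nabla$-invariance. Both rest on the constancy of the multiplicities from Corollary \ref{c:T} and the property $\nabla\tilde A=0$ from Lemma \ref{l:connection}. Without these, eigenspaces could jump in dimension.
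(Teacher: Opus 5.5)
Your proof is correct, but it obtains the submanifold structure by a different route than the paper.

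The paper argues through the return map. It computes $d\Phi^{T_{\min}}_{\tilde X_0}-1$ at points of $\Sigma_{\min}$ from \eqref{e:dphi} and notes that it has constant rank $2(k-k_{\tilde a_1})$. It then appeals to the constant rank theorem to get both embeddedness and \eqref{e:sigmanondeg}, and reads the splittings off the kernel. Only at the end does it deduce the bundle structure: the first splitting makes $\pi|_{\Sigma_{\min}}$ a submersion, and the explicit flow shows the fibers are spheres.

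You instead identify $\Sigma_{\min}$ globally as the $\gamma$-unit sphere bundle of the eigenbundle $E^{\tilde a_1}=\ker(\tilde A^2+\tilde a_1^2)$. This bundle is smooth because the multiplicities are constant by Corollary \ref{c:T}. Embeddedness and the $S^{2k_{\tilde a_1}-1}$-bundle structure are then immediate, and non-degeneracy becomes a separate eigenvalue check.

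Your version is more explicit at the one delicate point. Constant rank of $d\phi-1$ along the fixed set of a map $\phi$ does not by itself force that set to be a submanifold. Your eigenbundle description supplies the justification that the paper leaves implicit in the linear, periodic structure of the flow. It also names $(T_v\Sigma_{\min,q})^\perp=\bigoplus_{\tilde a\neq\tilde a_1}E_q^{\tilde a}$, which is the space on which $1-d\Phi^{T_{\min}}_{\tilde X_0}$ is inverted in Lemma \ref{l:vectors}.

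The paper's version is shorter and phrased intrinsically in terms of the time-$T_{\min}$ map, in the Bottkol--Kerman style, so it does not need the spectral decomposition. Both arguments rest on the same two inputs: constancy of the multiplicities, and $\nabla\tilde A=0$ via \eqref{e:dphi}.
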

\begin{proof}
If $(q,v)\in\Sigma_{\min}$, then by \eqref{e:dphi}
\begin{equation}\label{e:kerdphi}
d_{(q,v)}\Phi^{T_{\min}}_{\tilde X_0}-1_{T_{(q,v)}\Sigma}=\begin{pmatrix}
0&0\\
0& e^{T_{\min}\tilde A_q}-1
\end{pmatrix}
\end{equation}
has constant rank equal to $2(k-k_{\tilde a_1})$. By the constant rank theorem, $\Sigma_{\min}$ is an embedded submanifold and \eqref{e:sigmanondeg} holds. Moreover, computing explicitly the kernel using \eqref{e:kerdphi}, we deduce the splittings \eqref{e:Tsigmamin}. These splittings are invariant under $\Phi_{\tilde X_0}$ since $\mathcal H$, $\gamma$ and $\Sigma_{\min}$ are invariant under $\Phi_{\tilde X_0}$. Finally, the first splitting implies that $\pi\colon \Sigma_{\min}\to Q$ is a submersion. The formula for $\Phi_{\tilde X_0}$ shows that the fibers of $\pi$ are spheres of dimension $2k_{\tilde a_1}-1$. 
\end{proof}
We now aim to prove that $\Sigma_{\min}=\Sigma$ so that $\tilde X_0$ is actually Zoll. As a byproduct, we will see that the whole sequence of functions $T_n$ (and not just a subsequence) uniformly converges to a constant. To this purpose, we need to upgrade the existence result of Ginzburg--Gürel \cite{Ginzburg_gurel} and Usher \cite{usher} to the following bifurcation theorem in the spirit of Kerman \cite{Kerman}.
\begin{theorem}\label{bifurcation_theorem_morse_bott}
Assume that there exists $T>0$ such that $\Phi_{\tilde X_0}^T=\mathrm{id}_\Sigma$. Let $\Sigma_{\min}$ be the manifold of orbits of minimal period $T_{\min}$. For every $\delta_0>0$ there is $\varepsilon_0>0$ such that if $\varepsilon\in(0,\varepsilon_0)$, then there are at least $\mathrm{Cup Length}(Q)+k_{\tilde a_1}$ periodic orbits of $\tilde X_\varepsilon$ with period $\tilde T_\varepsilon$ with the property that
\begin{equation}
|\tilde T_\varepsilon-T_{\min}|<\delta_0.
\end{equation}
\end{theorem}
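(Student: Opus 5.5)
We prove Theorem \ref{bifurcation_theorem_morse_bott} by a Bottkol--Kerman reduction to the non-degenerate critical manifold $\Sigma_{\min}$ of Lemma \ref{l:minsub}, followed by a Lusternik--Schnirelmann count on the quotient orbit space.

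First we put $\tilde X_\varepsilon$ in a normal form near $\Sigma_{\min}$, in the spirit of Bottkol. We use the connection $\nabla$ of Lemma \ref{l:connection}, so that $\Phi_{\tilde X_0}$ is a $T_{\min}$-periodic free circle action on $\Sigma_{\min}$ (the $\tilde a_1$-eigenspace rotation). By Lemma \ref{l:tildeX} we have $\tilde X_\varepsilon=\tilde X_0+\tilde Y_\varepsilon$ with $\tilde Y_\varepsilon=O(\varepsilon)$ in $C^k$. The non-degeneracy \eqref{e:sigmanondeg} and the invariant splitting \eqref{e:Tsigmamin} make the normal bundle $(T_v\Sigma_{\min,q})^\perp$ carry a linearized return map $e^{T_{\min}\tilde A_q}$ without eigenvalue $1$.

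Next we find the perturbed invariant manifold. An implicit function argument on loops of period close to $T_{\min}$ (a Lyapunov--Schmidt reduction, or equivalently Bottkol's theorem) gives, for small $\varepsilon$, a diffeomorphism $\phi_\varepsilon=\mathrm{id}+O(\varepsilon)$ of a neighborhood of $\Sigma_{\min}$ with the following property. On $\phi_\varepsilon(\Sigma_{\min})$, the closed orbits of $\tilde X_\varepsilon$ with period near $T_{\min}$ are exactly those of a vector field $\tilde X_0+W_\varepsilon$, where $W_\varepsilon$ is tangent to $\Sigma_{\min}$, $\Phi_{\tilde X_0}$-invariant after averaging, and $O(\varepsilon)$. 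Moreover, any periodic orbit with period in $(T_{\min}-\delta_0,T_{\min}+\delta_0)$ lies near $\Sigma_{\min}$; this follows from the continuity argument in Theorem \ref{t:T_n}, because $T_{\min}$ is isolated among the periods of $\tilde X_0$.

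The orbits of the reduced field correspond to critical points of a function on the quotient $B:=\Sigma_{\min}/S^1$. Here we use Lemma \ref{l:tildeX}(c). The form $\omega_\varepsilon$ is closed, it is cohomologous to $\pi^*\sigma$, and $\tilde X_\varepsilon$ spans its kernel. Its restriction to $\phi_\varepsilon(\Sigma_{\min})$ is closed and, to leading order, equals $\pi^*\sigma+\varepsilon^2\rho|_{\Sigma_{\min}}$. The form $\rho|_{\Sigma_{\min}}$ descends to the fiberwise Fubini--Study form on the $\mathbb{C}P^{k_{\tilde a_1}-1}$-bundle $B\to Q$. Hence, after averaging, $\phi_\varepsilon^*\omega_\varepsilon|_{\Sigma_{\min}}$ is the pullback of a symplectic form on $B$ plus $d$ of a function times the connection form $\tilde\tau$.

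Following Kerman, we set up an action functional restricted to the reduced loops. Its critical points correspond to critical points of a smooth function $S_\varepsilon\colon B\to\mathbb R$, and these give closed orbits whose periods depend continuously on the critical values and are close to $T_{\min}$. Lusternik--Schnirelmann theory gives $\#\mathrm{Crit}(S_\varepsilon)\ge \mathrm{CupLength}(B)+1$. Since $B$ is a projective bundle, the Leray--Hirsch theorem gives
\[
\mathrm{CupLength}(B)\ge \mathrm{CupLength}(Q)+k_{\tilde a_1}-1 .
\]
The fiber class $c$ satisfies $c^{k_{\tilde a_1}-1}\neq 0$, and it multiplies a maximal nonzero product pulled back from $Q$ nontrivially. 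This yields the count $\mathrm{CupLength}(Q)+k_{\tilde a_1}$.

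The main obstacle is the normal form step. The difficulty is not the existence of an invariant manifold in itself. It is making the reduction variational, that is, identifying the periodic orbits near $\Sigma_{\min}$ with critical points of a single function on the compact quotient $B$, uniformly as $\varepsilon\to 0$. Since $\omega_\varepsilon$ degenerates at order $\varepsilon^2$ in the fibers, this may require rescaling time or the form by $\varepsilon^{-2}$ in the fiber directions. We would first try Bottkol's graph-transform construction on the loop space $C^1(S^1,\Sigma)$, using exactness of $\omega_\varepsilon-\pi^*\sigma$ to define the action.
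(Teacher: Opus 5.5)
\textbf{There is a genuine gap.} Your overall architecture matches the paper's: a Bottkol normal form $u_\varepsilon=\exp(U_\varepsilon)$ near $\Sigma_{\min}$, with an invariant remainder $V_\varepsilon$ tangent to $\Sigma_{\min}$ and lying in $\ker\tilde\tau$; a reduced action $S_\varepsilon$ on $R=\Sigma_{\min}/S^1$ (your $B$), built from a primitive of $u_\varepsilon^*\omega_\varepsilon-\pi^*\sigma$; and Lusternik--Schnirelmann plus Leray--Hirsch for the count. The count itself is fine.

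The problem is the step you flag as the main obstacle and then leave open. It is the heart of the proof, and it does not follow from Kerman's argument. One has $d_zS_\varepsilon=\mathcal K_\varepsilon(V_\varepsilon(z))$, where $\mathcal K_\varepsilon$ is the orbit integral of $\omega_\varepsilon(\mathcal P_{U_\varepsilon}\,\cdot\,,du_\varepsilon\,\cdot\,)$ on $\ker\tilde\tau|_{\Sigma_{\min}}$. So critical points of $S_\varepsilon$ give periodic orbits only if $\mathcal K_\varepsilon$ is injective.

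In Kerman's setting the form is fixed and nondegenerate, so an $O(\varepsilon)$ normal form is enough. Here the model pairing is $N_\varepsilon=\mathrm{diag}(\sigma,\varepsilon^2\rho)$, which degenerates as $\varepsilon\to0$. Your implicit function argument only gives $\phi_\varepsilon=\mathrm{id}+O(\varepsilon)$. Suppose $du_\varepsilon$ tilts vertical vectors into horizontal directions at order $\varepsilon$. Then two error terms appear, both $O(\varepsilon^2)$:
\begin{itemize}
\item $\sigma((du_\varepsilon w)^\pi,(du_\varepsilon w')^\pi)$ for vertical $w,w'$;
\item the Schur-complement correction to the fiber block coming from $O(\varepsilon)$ horizontal--vertical couplings.
\end{itemize}
These are the same size as $\varepsilon^2\rho$, so they can destroy invertibility.

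Hence your assertion that the pulled-back restricted form is, to leading order, $\pi^*\sigma+\varepsilon^2\rho$ is exactly what is unproved. Rescaling the fibers by $\varepsilon^{-2}$ is the right normalization (it amounts to multiplying by $N_\varepsilon^{-1}$), but it only helps once you know the errors are anisotropically small. The same issue blocks your alternative of writing the restricted form as $\bar\pi^*\Omega+d(f\tilde\tau)$. Even granting that form, concluding $V_\varepsilon=0$ from $df=0$ needs the same invertibility of the pairing.

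The missing ingredient is the refined estimate $U_\varepsilon^\pi=O(\varepsilon^2)$, together with $\mathcal Q_{U_\varepsilon}^{-1}du_\varepsilon,\ \mathcal Q_{U_\varepsilon}^{-1}\mathcal P_{U_\varepsilon}\in\iota+\mathcal D$. These give $\mathcal K_\varepsilon\in T_{\min}(N_\varepsilon+\mathcal E)$, and hence invertibility for small $\varepsilon$. The paper obtains them from three choices:
\begin{itemize}
\item The forcing satisfies $\tilde Y^\pi_\varepsilon=O(\varepsilon^2)$, by Lemma~\ref{l:tildeX}(b).
\item The averaged connection of Lemma~\ref{l:connection} makes $d\Phi_{\tilde X_0}$, and hence $\mathcal L^{-1}$, preserve the horizontal--vertical splitting (Lemma~\ref{l:vectors}). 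This is how the horizontal smallness of the forcing passes to $U_\varepsilon$.
\item The exponential map is taken for a connection $\tilde\nabla$ that preserves the splitting and whose torsion only sends pairs of horizontal vectors to vertical ones (Lemma~\ref{l:torsion}). Then parallel transport is block diagonal and $\exp$ does not mix the blocks at order $\varepsilon$.
\end{itemize}
A generic Bottkol or Lyapunov--Schmidt construction, for instance one using a torsion-free connection, need not provide these estimates.
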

Before going into the proof of the theorem, we state the promised corollary.
\begin{corollary}\label{cor: bifurcation}
Let $H$ be a Hamiltonian having a closed, connected, symplectic Morse-Bott minimum at $H=0$. If $H$ is Zoll along a sequence of energies $\tfrac12\varepsilon_n^2\to 0$, then
\begin{enumerate}
\item the flow of $\tilde X_0$ is Zoll with period $T_{\min}=2\pi$ and $\tilde A=J$ is an almost complex structure $J$ (compatible with $\rho$);
\item if $T_n\colon\Sigma\to(0,\infty)$ denotes the sequence of functions that gives the period of orbits of $\tilde X_{\varepsilon_n}$, then $T_n$ converges uniformly to $2\pi$. 
\end{enumerate}
\end{corollary}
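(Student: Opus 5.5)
We combine Corollary \ref{c:T} with Theorem \ref{bifurcation_theorem_morse_bott}. By Corollary \ref{c:T}, after passing to a subsequence $n_\ell$, the period functions $T_{n_\ell}$ converge uniformly to a constant $T>0$. Moreover, $\Phi^T_{\tilde X_0}=\mathrm{id}_\Sigma$, and the spectral numbers $\tilde a_1\ge\dots\ge\tilde a_k$ are constant on $Q$. The hypothesis of Theorem \ref{bifurcation_theorem_morse_bott} is therefore satisfied. Fix $\delta_0>0$. For $\ell$ large, that theorem provides a periodic orbit of $\tilde X_{\varepsilon_{n_\ell}}$ whose period $\tilde T_{\varepsilon_{n_\ell}}$ satisfies $|\tilde T_{\varepsilon_{n_\ell}}-T_{\min}|<\delta_0$.

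This period need not be minimal, so it is a multiple $m_\ell T_{n_\ell}$ of the value of $T_{n_\ell}$ at a point of the orbit. Since $\min T_n$ is bounded away from zero by Theorem \ref{t:T_n}(1), the multiples $m_\ell$ are bounded. Hence $T_{\min}=mT$ for some positive integer $m$, after passing to a further subsequence and letting $\delta_0\to0$. On the other hand, $T$ is itself a period of every orbit of $\tilde X_0$, in particular of the orbits in $\Sigma_{\min}$, so $T\ge T_{\min}$. Thus $m=1$ and $T=T_{\min}=2\pi/\tilde a_1$.

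It remains to show $\Sigma_{\min}=\Sigma$. Since $\Phi^{T}_{\tilde X_0}=\mathrm{id}$ and $T=T_{\min}$, every point of $\Sigma$ is fixed by $\Phi^{T_{\min}}_{\tilde X_0}$. By definition this says $\Sigma_{\min}=\Sigma$, i.e.\ $k_{\tilde a_1}=k$. Then all spectral numbers coincide, and the normalization $\prod\tilde a_j=1$ forces $\tilde a_j=1$. Hence $T_{\min}=2\pi$ and $\tilde A^2=-\mathrm{id}$, so $\tilde A=J$ is an almost complex structure, compatible with $\rho$ because $\rho(\cdot,J\cdot)=(\det A)^{-1/2k}\gamma$ is positive definite. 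The minimal period of every orbit of $\tilde X_0$ is $2\pi$, so the flow is Zoll. This gives (1).

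For (2), apply the same argument to an arbitrary subsequence of $T_n$. By Theorem \ref{t:T_n} and Corollary \ref{c:T}, every subsequence has a further subsequence converging uniformly to a constant. By the argument above, that constant must equal $T_{\min}=2\pi$, which is independent of the subsequence. Hence the whole sequence $T_n$ converges uniformly to $2\pi$.

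The main obstacle is relating the bifurcating orbits to the limit period. The bifurcation theorem gives orbits with period near $T_{\min}$, but not necessarily minimal ones. We rely on the uniform convergence of $T_n$ and the lower bound on $\min T_n$ to conclude that the limit $T$ is an integer divisor of $T_{\min}$, while also $T\ge T_{\min}$. This rests on Theorem \ref{bifurcation_theorem_morse_bott} applying to the rescaled fields $\tilde X_{\varepsilon_n}$ along the sequence, with $\varepsilon_0$ uniform, which the statement provides.
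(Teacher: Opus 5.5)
Your proof is correct and follows essentially the paper's route: the paper argues by contradiction (if $T_{\min}<T$, pick $S\in(T_{\min},T)$; the bifurcating orbits then have minimal period at most $S$, contradicting the uniform convergence of $T_{n_\ell}$ to $T$), which is your argument that $T=T_{\min}$ stated contrapositively, and it then uses the same normalization of the spectral numbers and the same subsequence argument for (2). Your integer-multiple bookkeeping is harmless but unnecessary, since the minimal period of a bifurcating orbit is at most $T_{\min}+\delta_0$, and this already gives $T\le T_{\min}$.
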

\begin{proof}
Assume by contradiction that $\Phi_{\tilde X_0}$ is not Zoll. Thus $T_{\min}<T$, where $T$ is the positive number given by Corollary \ref{c:T}. Fix an $S\in (T_{\min},T)$. Theorem \ref{bifurcation_theorem_morse_bott} shows that for all $\varepsilon>0$ small enough $\tilde X_\varepsilon$ has a periodic orbit of (minimal) period $\tilde T_\varepsilon\leq S$. This contradicts the fact that the sequence of functions $T_{n_\ell}$ converges uniformly to $T>S$. This shows that $\Phi_{\tilde X_0}$ is Zoll which also implies that all the spectral numbers $\tilde a_1,\ldots,\tilde a_k$ are equal. Since $1=\det\tilde A_q=a_1^2\cdot \ldots\cdot a_k^2$, we deduce that all spectral numbers are equal to $1$. Hence, $T_{\min}=2\pi$ and $\tilde A$ is an almost complex structure. This finishes the proof of Property (1). Applying the above argument to any subsequence $n_{\ell'}$ of $n$, we find a further subsequence $n_{\ell'_\ell}$ such that $T_{\ell'_\ell}$ converges uniformly to $2\pi$. Hence, the whole sequence $T_n$ converges to $2\pi$.
\end{proof}
\subsection{Vector Fields along $\Sigma_{\min}$}
To prove Theorem \ref{bifurcation_theorem_morse_bott}, we need to study the space $\mathfrak{X}(\Sigma|_{\Sigma_{\min}})$ of vector fields of $\Sigma$ along the submanifold $\Sigma_{\min}$. Let $U\in\mathfrak{X}(\Sigma|_{\Sigma_{\min}})$. Equation \ref{e:Tsigmamin} yields a $\Phi_{\tilde X_0}$-invariant splitting
\begin{equation}\label{e:Tsigmasplit}
T_{(q,v)}\Sigma|_{\Sigma_{\min}}=T_{(q,v)}\Sigma_{\min}\oplus (T_v\Sigma_{\min,q})^\perp
\end{equation}
and we use it to decompose
\begin{equation}
U=U^\top+U^\perp.
\end{equation}
We define the average vector field
\begin{equation}
\overline U:=\frac{1}{T}\int_0^{T}d\Phi_{\tilde X_0}^{-t}\cdot  U(\Phi_{\tilde X_0}^t)dt.
\end{equation}
We denote by $\mathfrak X_0(\Sigma|_{\Sigma_{\min}})$ the space of vector fields with zero average. This space decomposes into a tangential and a vertical part according to the splitting \eqref{e:Tsigmasplit}:
\begin{equation}\label{e:split1}
\mathfrak X_0(\Sigma|_{\Sigma_{\min}})=\mathfrak X_0(\Sigma_{\min})\oplus \mathfrak X_0^\perp(\Sigma|_{\Sigma_{\min}}).
\end{equation}
For later purposes, we also define
\begin{equation}
\mathfrak X(\ker\tilde\tau|_{\Sigma_{\min}}):=\{W\in \mathfrak X(\Sigma_{\min}\ |\ \tilde\tau(W)=0\}
\end{equation}
and 
\begin{equation}
\mathfrak X_1(\Sigma|_{\Sigma_{\min}}):=\big\{U\in\mathfrak X(\Sigma|_{\Sigma_{\min}})\ \big|\ \overline{U^\top}=0 \big\},
\end{equation}
which in the splitting \eqref{e:Tsigmasplit} decomposes into
\begin{equation}\label{e:split2}
\mathfrak X_1(\Sigma|_{\Sigma_{\min}})=\mathfrak X_0(\Sigma_{\min})\oplus\mathfrak X^\perp(\Sigma|_{\Sigma_{\min}}).
\end{equation}
Finally, let $\bar{\mathfrak X}(\Sigma|_{\Sigma_{\min}})\subset \mathfrak X(\Sigma|_{\Sigma_{\min}})$ be the space of vector fields that are invariant under the flow of $\tilde X_0$. The main properties of the spaces of vector fields introduced above are contained in the following lemma. To formulate it, we notice that since $\Phi_{\tilde X_0}$ induces a free $\mathbb R/T_{\min}\mathbb Z$-action on $\Sigma_{\min}$, the quotient map $\bar\pi\colon \Sigma_{\min}\to R$ is a circle bundle over a closed manifold $R$. There is an isomorphism
\begin{equation}
\mathfrak X(R)\to \bar C^\infty(\Sigma_{\min}),\qquad \bar\nu\mapsto \nu:=\bar\nu\circ\bar\pi
\end{equation}
between the space of functions on $\Sigma_{\min}$ which are invariant under $\Phi_{\tilde X_0}$ and the space of functions on $R$.
\begin{lemma}\label{l:vectors}
We have isomorphisms
\begin{equation}
\bar{\mathfrak X}(\Sigma|_{\Sigma_{\min}})\cong(\bar C^\infty(\Sigma_{\min})\tilde X_0)\oplus \bar{\mathfrak X}(\ker\tilde\tau|_{\Sigma_{\min}})\cong C^\infty(R)\oplus \mathfrak X(R)
\end{equation}
given by
\begin{equation}
U\mapsto \nu\tilde X_0+W\mapsto (\bar\nu,\bar W).
\end{equation}
Moreover, the average map
\begin{equation}
\mathfrak X(\Sigma|_{\Sigma_{\min}})\to \bar{\mathfrak X}(\Sigma|_{\Sigma_{\min}}),\qquad U\mapsto \overline U
\end{equation}
is a linear projection which is continuous from the $C^k$-topology to the $C^k$-topology for any $k\geq 0$. Therefore, we have a splitting
\begin{equation}\label{e:splittings}
\mathfrak X(\Sigma|_{\Sigma_{\min}})\cong\bar{\mathfrak X}(\Sigma|_{\Sigma_{\min}})\oplus \mathfrak X_0(\Sigma|_{\Sigma_{\min}}).
\end{equation}
Finally, there is an isomorphism
\begin{equation}
\mathcal L\colon \mathfrak X_1(\Sigma|_{\Sigma_{\min}})\to \mathfrak X_0(\Sigma|_{\Sigma_{\min}}),\qquad U\mapsto [\tilde X_0,U],
\end{equation}
where $\mathcal L^{-1}$ is continuous from the $C^k$-topology to the $C^k$-topology. Furthermore, $\mathcal L$ and $\mathcal L^{-1}$ preserve the splittings \eqref{e:split1} and \eqref{e:split2}, and the horizontal-vertical splitting given by $\nabla$.
\end{lemma}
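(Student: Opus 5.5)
The proof rests on the fact that, along $\Sigma_{\min}$, the linearized flow is completely explicit. By Lemma~\ref{l:connection}, the flow $\Phi^t_{\tilde X_0}$ has differential $\mathrm{diag}(1,e^{t\tilde A_q})$ in the horizontal-vertical splitting of the invariant connection $\nabla$. By Lemma~\ref{l:minsub}, the splittings \eqref{e:Tsigmamin} are $\Phi_{\tilde X_0}$-invariant. Hence everything reduces to Fourier analysis of a linear circle action on the fibres of a vector bundle over $\Sigma_{\min}$.

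\textbf{Invariant vector fields.} Write an invariant $U$ as $U=\nu\tilde X_0+W$ with $\nu:=\tilde\tau(U)$ and $W:=U-\nu\tilde X_0$, so that $\tilde\tau(W)=0$. Both $\tilde\tau$ and $\tilde X_0$ are invariant by Lemma~\ref{l:connection}, with $\tilde\tau(\tilde X_0)=1$. Hence $\nu$ is an invariant function and $W$ is an invariant field in $\ker\tilde\tau$.

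Next I show that the $\perp$-component of an invariant field vanishes along $\Sigma_{\min}$. The flow acts on $(T_v\Sigma_{\min,q})^\perp$ through $e^{t\tilde A_q}$ restricted to the eigenspaces with spectral numbers $\tilde a_j<\tilde a_1$. Invariance along an orbit $t\mapsto(q,e^{t\tilde A_q}v)$ of period $T_{\min}$ forces
\begin{equation*}
e^{T_{\min}\tilde A_q}U^\perp=U^\perp .
\end{equation*}
This eigenvalue-one equation has only the zero solution on that subspace, by non-degeneracy \eqref{e:sigmanondeg}. So $W$ is tangent to $\Sigma_{\min}$.

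Finally, invariant functions and invariant fields in $\ker\tilde\tau$ on the circle bundle $\bar\pi\colon\Sigma_{\min}\to R$ correspond bijectively to $C^\infty(R)$ and $\mathfrak X(R)$. Here $\tilde\tau$ plays the role of a connection form, and $\bar W$ is obtained by pushing $W$ forward with $d\bar\pi$, which is well defined by invariance. Conversely, a field on $R$ lifts horizontally to $\ker\tilde\tau$.

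\textbf{Average map.} Averaging is linear and idempotent on invariant fields. It is also $C^k$-continuous, because the flow is a compact smooth group action and we are averaging a smooth family over a compact interval. The image is invariant, since $\Phi_{\tilde X_0}^T=\mathrm{id}$. Writing $U=\overline U+(U-\overline U)$ gives \eqref{e:splittings}.

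\textbf{Inverse of $\mathcal L$.} For the operator $\mathcal L$ I use the identity
\begin{equation*}
\frac{d}{dt}\,d\Phi^{-t}U(\Phi^t)=d\Phi^{-t}[\tilde X_0,U](\Phi^t).
\end{equation*}
Integrating over $[0,T]$ shows that $\mathcal LU$ has zero average, so $\mathcal L$ maps into $\mathfrak X_0$. Since $\mathcal L$ commutes with the flow, it preserves all the invariant splittings, including the horizontal-vertical one.

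To invert it on a zero-average $V$, set $U_t:=d\Phi^{-t}U(\Phi^t)$ and solve $\frac{d}{dt}U_t=V_t$. The solution is
\begin{equation*}
U:=\frac1T\int_0^T\!\!\int_0^t d\Phi^{-s}V(\Phi^s)\,ds\,dt\cdot(-1)+\ldots ,
\end{equation*}
that is, the standard primitive $P V:=-\frac1T\int_0^T t\,d\Phi^{-t}V(\Phi^t)\,dt$, which is the unique solution with $\overline{U^\top}=0$. Uniqueness holds because the kernel of $\mathcal L$ consists of the invariant fields.

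On the $\perp$-part the normalization $\overline{U^\top}=0$ imposes nothing. There I instead invert fibrewise, writing
\begin{equation*}
U^\perp=\big(e^{T_{\min}\tilde A}-1\big)^{-1}\int_0^{T_{\min}}\ldots ,
\end{equation*}
using that $e^{T_{\min}\tilde A}-1$ is invertible on the $\perp$ bundle. This invertibility is the non-degeneracy \eqref{e:sigmanondeg}. It is what makes $\mathcal L$ surjective onto all of $\mathfrak X^\perp$ without an averaging constraint, matching the splittings \eqref{e:split1} and \eqref{e:split2}.

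\textbf{Main obstacle.} The delicate point is exactly this $\perp$-component. On it, every field, not only the zero-average ones, must be hit, and the inverse must be $C^k$-bounded. This needs the uniform invertibility of $e^{T_{\min}\tilde A_q}-1$ on $(T_v\Sigma_{\min,q})^\perp$, which follows from the constancy of the spectral numbers in Corollary~\ref{c:T}: the eigenvalues stay uniformly away from $1$. Continuity in $C^k$ then follows by differentiating under the integral sign, using that the flow and the connection are smooth.
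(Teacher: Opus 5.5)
Your proposal is correct and follows essentially the same route as the paper. You split invariant fields via $\tilde\tau$ and kill their normal component using the non-degeneracy \eqref{e:sigmanondeg}, treat averaging as a $C^k$-continuous projection, and invert $\mathcal L$ by integrating $d\Phi_{\tilde X_0}^{-t}\cdot U(\Phi_{\tilde X_0}^t)$ along the flow, with the zero-average normalization on the tangential part and $(1-d\Phi_{\tilde X_0}^{T_{\min}})^{-1}$ on the normal part. There are two sign slips: your double-integral expression is right, but it simplifies to $+\frac1T\int_0^T t\,d\Phi_{\tilde X_0}^{-t}\cdot V(\Phi_{\tilde X_0}^t)\,dt$, not $-$, and the normal prefactor should be $(e^{-T_{\min}\tilde A}-1)^{-1}=(1-e^{T_{\min}\tilde A})^{-1}e^{T_{\min}\tilde A}$.
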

\begin{proof}
Let $U\in\bar{\mathfrak X}(\Sigma|_{\Sigma_{\min}})$. In particular, $U=d\Phi_{\tilde X_0}^{T_{\min}}U$, and \eqref{e:sigmanondeg} implies $U\in\mathfrak X(\Sigma_{\min})$. The function $\tilde\tau(U)$ is invariant under $\Phi_{\tilde X_0}$ since $\tilde\tau$ is invariant by Lemma \ref{l:connection}.(b). Therefore, it can be written as $\tilde\tau(U)=f\circ\bar\pi$ for a function $f\in C^\infty(R)$. All invariant vector fields parallel to $\tilde X_0$ can be written as $f\circ\bar\pi \tilde X_0$. The vector field $W:=U-f\circ\tilde X_0$ is also invariant and in the kernel of $\tilde \tau$ since $\tilde\tau(\tilde X_0)=1$. The map $d_z\bar\pi\colon\ker\tilde\tau_z|_{\Sigma_{\min}}\to T_{\bar\pi(z)}R$ is an isomorphism. If $W$ is invariant under the flow of $\Phi_{\tilde X_0}$, then $\bar W_{\bar\pi(z)}:=d_z\bar\pi W$ depends only on $\bar\pi(z)$ and hence determines a vector field on $R$. Vice versa, given a vector field on $R$ we can lift it to a vector field in $\ker\tilde\tau$ that is invariant under $\Phi_{\tilde X_0}$.

By the periodicity of the flow and the change-of-variable formula in integrals, the average map takes values in $\bar{\mathfrak X}(\Sigma|_{\Sigma_{\min}})$. If $U$ is already invariant, then $d\Phi_{\tilde X_0}^{-t} U(\Phi_{\tilde X_0}^t)$ for all $t$ and therefore $\overline U=U$. Hence the average map is a projection onto $\bar{\mathfrak X}(\Sigma|_{\Sigma_{\min}})$ and from the formula we see that it is continuous from the $C^k$-topology to the $C^k$-topology. Since we can decompose a space as the direct sum of the range and the kernel of a projection, the splittings in \eqref{e:splittings} follow.

Let us now study the map $\mathcal L$. First, we observe that
\begin{equation}
\overline{[\tilde X_0,U]}=[\tilde X_0,\overline{U}]=0,
\end{equation}
where in the last equality we used that $\overline{U}$ is invariant under $\tilde X_0$. Therefore, $\mathcal L$ takes values in $\mathfrak X_0(\Sigma|_{\Sigma_{\min}})$. Thus, let $Z\in\mathfrak X_0(\Sigma|_{\Sigma_{\min}})$ and consider the equation 
\begin{equation}\label{e:bracket}
[\tilde X_0,U]=Z.
\end{equation}
This equation is equivalent to
\begin{equation}\label{e:integral1}
d\Phi_{\tilde X_0}^{-s}\cdot U(\Phi_{\tilde X_0}^s)=U+\int_0^sd\Phi_{\tilde X_0}^{-t}\cdot Z(\Phi_{\tilde X_0}^t)dt.
\end{equation}
We write $U=U^\top+U^\perp$ and $Z=Z^\top+Z^\perp$ using the splitting \eqref{e:Tsigmasplit}. Equations \eqref{e:bracket} and \eqref{e:integral1} then decouple into an equation for $U^\top$ and $Z^\top$, and an equation for $U^\perp$ and $Z^\perp$.

For the tangential part, we take the average of \eqref{e:integral1} for $s\in[0,T_{\min}]$ and recall that $\overline{U^\top}=0$ to obtain
\begin{equation}\label{e:tan}
U^\top=\frac{1}{T_{\min}}\int_0^{T_{\min}}t\,d\Phi_{\tilde X_0}^{-t}\cdot Z^\top(\Phi_{\tilde X_0}^t)dt,
\end{equation}
where we used that $\overline{Z^\top}=0$ and that we can take the average on $[0,T_{\min}]$ instead of taking it on $[0,T]$ since $\Phi_{\tilde X_0}$ is $T_{\min}$-periodic on $\Sigma_{\min}$. 

For the orthogonal part, we evaluate \eqref{e:integral1} for $s=T_{\min}$ and use that $\Phi_{\tilde X_0}^{T_{\min}}=\mathrm{Id}$ on $\Sigma_{\min}$ and that $1-d\Phi_{\tilde X_0}^{T_{\min}}$ is invertible on the orthogonal part since $\Sigma_{\min}$ is not degenerate by Lemma \ref{l:minsub}.(a). We obtain
\begin{equation}\label{e:nor}
U^\perp=(1-d\Phi_{\tilde X_0}^{T_{\min}})^{-1}d\Phi_{\tilde X_0}^{T_{\min}}\int_0^{T_{\min}}d\Phi_{\tilde X_0}^{-t}\cdot Z^\perp(\Phi_{\tilde X_0}^t)dt.
\end{equation}
From \eqref{e:tan} and \eqref{e:nor}, we see that $\mathcal L^{-1}$ is continuous from the $C^k$-topology to the $C^k$-topology, and that $\mathcal L$ and $\mathcal L^{-1}$ preserve the horizontal-vertical splitting of $\nabla$ since so does $d\Phi_{\tilde X_0}$ by Lemma \ref{l:connection}.(a).
\end{proof}
\subsection{The Bottkol Normal Form Theorem}
The first step in the proof of Theorem \ref{bifurcation_theorem_morse_bott} is an enhancement of a normal form theorem of Bottkol on the bifurcation of periodic orbits from a non-degenerate periodic submanifold \cite{Bottkol}. To formulate the statement, we need a couple of definitions.   Let $\nabla^Q$ be any torsion-free affine connection on $TQ$ and let $\nabla$ be the affine connection given by Lemma \ref{l:connection}. We define a connection $\tilde \nabla^E$ on $TE$ by
\begin{equation}\label{e:connection_E}
\tilde \nabla^E:=\pi^*\nabla^Q\oplus \pi^*\nabla.
\end{equation}
and a connection $\tilde \nabla$ on $T\Sigma$ by
\begin{equation}\label{e:connection}
 \tilde\nabla:= \text{fiberwise $\gamma$-orthogonal projection of $\tilde\nabla^E$ from $TE|_{\Sigma}$ to $T\Sigma$}.  
\end{equation}
For every $\xi\in T\Sigma$ we write $\xi^{\tilde v}\in T(T\Sigma)$ for the vertical lift of $\xi$ with respect to the splitting of $T(T\Sigma)$ induced by $\tilde\nabla$. Let $\exp\colon T\Sigma\to \Sigma$ be the exponential map of $\tilde\nabla$.

For every $U\in \mathfrak X(\Sigma|_{\Sigma_{\min}})$, we define the map
\begin{equation}\label{e:u}
u\colon \Sigma_{\min}\to\Sigma,\qquad u(z):=\exp(z,U(z)),\qquad\forall\,z\in\Sigma_{\min}.
\end{equation}
For every $z\in\Sigma_{\min}$, we also define the $\tilde\nabla$-geodesic
\begin{equation}\label{e:delta}
\delta_{U(z)}\colon [0,1]\to\Sigma,\qquad \delta_{U(z)}(t):=\exp(z,tU(z)),
\end{equation}
and consider the Jacobi field endomorphism
\begin{equation}\label{e:P}
\mathcal P_U(z)\colon T_z\Sigma\to T_{u(z)}\Sigma,\quad \mathcal P_U(z)\cdot \xi:=d_{(z,U(z))}\exp{} \cdot \xi^{\tilde v}
\end{equation}
and the parallel transport endomorphism
\begin{equation}\label{e:Q}
\mathcal Q_U(z)\colon T_z\Sigma\to T_{u(z)}\Sigma, \quad\mathcal Q_U(z):=\text{`parallel transport along $\delta_{U(z)}$'}.
\end{equation}
We denote by $\mathcal C$ the class of paths of homomorphisms
\begin{equation}
\Lambda_\varepsilon\colon T\Sigma_{\min}\to T\Sigma|_{\Sigma_{\min}},\qquad \forall\,\varepsilon>0 \text{ small enough.}
\end{equation}
We define the subclass $\mathcal D\subset\mathcal C$ of paths such that
\begin{equation}\label{e:mathcald}
    \mathcal D:=\left\{\Lambda\in\mathcal C\ \Big|\ \Lambda_\varepsilon=\begin{pmatrix}
        O(\varepsilon)&O(\varepsilon^2)\\
        O(\varepsilon)&O(\varepsilon)
    \end{pmatrix}\right\},
\end{equation}
where the block decomposition is with respect to the horizontal-vertical splitting of $\nabla$.
\begin{theorem}\label{bottkol_sec_4}
Let
\begin{equation}
\tilde X_\varepsilon=\tilde X_0+\tilde Y_\varepsilon
\end{equation} be the path of vector fields on $\Sigma$ where $\tilde Y_\varepsilon$ satisfies \eqref{e:tildeY}. For every $\varepsilon>0$ small enough there exist unique vector fields and a function
\begin{equation}
U_\varepsilon \in\mathfrak X_1(\Sigma|_{\Sigma_{\min}}),\qquad V_\varepsilon\in\bar{\mathfrak X}(\ker\tilde\tau|_{\Sigma_{\min}}),\qquad \mu_{\varepsilon}\in \bar C^\infty(\Sigma_{\min})
\end{equation}
such that, defining \begin{equation}
u_\varepsilon:=\exp(U_\varepsilon),\qquad \nu_\varepsilon:=1+\mu_\varepsilon,
\end{equation}
we get
\begin{equation}\label{e:mainbott}
    \nu_{\varepsilon}\tilde X_{\varepsilon} \circ u_\varepsilon = du_\varepsilon\cdot\tilde X_0 -\mathcal{P}_{U_{\varepsilon}}\cdot V_{\varepsilon}
\end{equation}
and the following estimates hold
\begin{equation}\label{e:estimates}
\begin{aligned}
(a)&\ \ U_\varepsilon=O(\varepsilon),\qquad V_\varepsilon=O(\varepsilon),\qquad\mu_\varepsilon=O(\varepsilon),\\
(b)&\ \ U^\pi_\varepsilon=O(\varepsilon^2), \qquad\mathcal Q^{-1}_{U_\varepsilon}\cdot du_\varepsilon\in \iota+\mathcal D,\qquad \mathcal Q^{-1}_{U_\varepsilon}\cdot\mathcal P_{U_\varepsilon}\in \iota+\mathcal D,
\end{aligned}
\end{equation}
where $\iota\colon T\Sigma_{\min}\to T\Sigma|_{\Sigma_{\min}}$ is the inclusion and $\mathcal D$ is defined in \eqref{e:mathcald}.
\end{theorem}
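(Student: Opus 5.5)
The plan is to rewrite \eqref{e:mainbott} as a fixed-point problem for $(U,V,\mu)\in\mathfrak X_1(\Sigma|_{\Sigma_{\min}})\oplus\bar{\mathfrak X}(\ker\tilde\tau|_{\Sigma_{\min}})\oplus\bar C^\infty(\Sigma_{\min})$ and solve it by the implicit function theorem / contraction mapping in $C^k$-spaces, following Bottkol. First I will pull \eqref{e:mainbott} back to $T\Sigma|_{\Sigma_{\min}}$ by applying $\mathcal Q_U^{-1}$. Writing $du\cdot\tilde X_0=\mathcal Q_U\big(\tilde X_0+[\tilde X_0,U]+\mathcal N_1(U)\big)$, with $\mathcal N_1$ quadratic in $U$ (coming from the curvature and torsion of $\tilde\nabla$ and the second-order Taylor expansion of $\exp$), and $\mathcal P_U=\mathcal Q_U(1+\mathcal N_2(U))$, the equation becomes
\begin{equation*}
\mathcal L U+\mu\tilde X_0+V=\mathcal Q_U^{-1}\big(\nu\tilde X_0(u)+\nu\tilde Y_\varepsilon(u)\big)-\tilde X_0-\mu\tilde X_0-\mathcal N_1(U)-\mathcal N_2(U)V=:\mathcal F_\varepsilon(U,V,\mu).
\end{equation*}
The term $\mathcal Q_U^{-1}\tilde X_0(u)-\tilde X_0$ is already absorbed into $[\tilde X_0,U]$ up to quadratic order. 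Here $\mathcal F_\varepsilon(0,0,0)=\tilde Y_\varepsilon|_{\Sigma_{\min}}=O(\varepsilon)$ and $D\mathcal F_\varepsilon(0)=O(\varepsilon)$.

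By Lemma \ref{l:vectors}, the left-hand side is an isomorphism onto $\mathfrak X(\Sigma|_{\Sigma_{\min}})$. Indeed, $\mathcal L$ maps $\mathfrak X_1$ isomorphically onto $\mathfrak X_0$, and $(\mu,V)\mapsto \mu\tilde X_0+V$ maps onto $\bar{\mathfrak X}$, using $\bar{\mathfrak X}\cong \bar C^\infty\tilde X_0\oplus\bar{\mathfrak X}(\ker\tilde\tau)$. Both inverses are continuous $C^k\to C^k$. Inverting this operator turns the equation into $(U,V,\mu)=\Theta_\varepsilon(U,V,\mu)$, which is a contraction on a ball of radius $C\varepsilon$ for $\varepsilon$ small. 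This gives existence, uniqueness and estimate (a). The loss of derivatives is harmless: $\mathcal L^{-1}$ is given by explicit integrals \eqref{e:tan}, \eqref{e:nor} with no smoothing needed, and composing $\tilde Y_\varepsilon$ with $u$ only costs derivatives of $\tilde Y_\varepsilon$, which is smooth. So I will work in $C^k$ with $\mathcal F_\varepsilon$ Lipschitz there, or bootstrap regularity afterwards.

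For (b), I will use the refined structure. By \eqref{e:tildeY} the horizontal part of $\tilde Y_\varepsilon$ is $O(\varepsilon^2)$. Lemma \ref{l:vectors} says $\mathcal L^{-1}$ preserves the horizontal–vertical splitting of $\nabla$, and the splitting of $\tilde\nabla$ is compatible with it because $\tilde\nabla^E=\pi^*\nabla^Q\oplus\pi^*\nabla$. Feeding this into one more iteration gives $U^\pi_\varepsilon=O(\varepsilon^2)$, since the quadratic terms are $O(\varepsilon^2)$ anyway. For $\mathcal Q_U^{-1}du\in\iota+\mathcal D$, I will expand $du=\mathcal Q_U(\iota+\tilde\nabla U+O(|U|^2))$. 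The vertical-to-horizontal block is controlled by $\tilde\nabla_{\text{vert}}U^\pi=O(\varepsilon^2)$, while all other blocks are $O(\varepsilon)$; the point is that the block-diagonal connection $\tilde\nabla^E$ does not mix components, so the horizontal component of the derivative stays of order $U^\pi$ plus quadratic terms. The same expansion, $\mathcal P_U=\mathcal Q_U(1+O(|U|^2))$ from the Jacobi field equation together with the $O(\varepsilon^2)$ horizontal bound, yields the estimate for $\mathcal Q^{-1}_U\mathcal P_U$.

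I expect the main obstacle to be the bookkeeping behind the $\mathcal D$-estimates: showing that the off-diagonal (vertical-to-horizontal) block is genuinely $O(\varepsilon^2)$. This requires the projected connection $\tilde\nabla$ on $T\Sigma$ to respect the splitting up to terms controlled by $U^\pi$. Here I will rely on $\nabla\gamma=0$, so that $\mathcal H|_\Sigma\subset T\Sigma$ and the $\gamma$-projection acts only on vertical components.
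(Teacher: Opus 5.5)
Your architecture is the paper's. You apply $\mathcal Q_U^{-1}$, invert $\Lambda(\mu,U,V)=\mu\tilde X_0-[\tilde X_0,U]+V$ using Lemma \ref{l:vectors}, and obtain $U^\pi_\varepsilon=O(\varepsilon^2)$ from $\tilde Y^\pi_\varepsilon=O(\varepsilon^2)$ together with the fact that $\mathcal L^{-1}$ preserves the horizontal--vertical splitting.

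The gap is in the $\mathcal D$-estimates, which you derive from torsion-free expansions. The connection $\tilde\nabla$ is not torsion-free. Because $\tilde\nabla^E=\pi^*\nabla^Q\oplus\pi^*\nabla$ is block-diagonal with respect to a non-integrable horizontal distribution, one gets $\tilde T(\eta^h,\zeta^h)=\pm(R^\nabla(\eta,\zeta)v)^v$, which is nonzero as soon as $\nabla$ is curved. The Jacobi equation then contains $(\tilde\nabla_{\dot\delta}\tilde T)(\dot\delta,J)+\tilde T(\dot\delta,\dot J)$. For $J(0)=0$, $\dot J(0)=\xi$ one finds $\ddot J(0)=\tilde T(U,\xi)$, whence
\begin{equation*}
\mathcal Q_U^{-1}\mathcal P_U=\iota+\tfrac12\tilde T(U,\cdot)+O(\Vert U\Vert^2)
\end{equation*}
rather than your $\iota+O(\Vert U\Vert^2)$. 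Torsion contributions to $\mathcal Q_U^{-1}du$ must be tracked in the same way. The term $\tfrac12\tilde T(U,\cdot)$ is linear in $U$, hence a priori only $O(\varepsilon)$. If $\tilde T$ had a component sending vertical inputs to horizontal outputs, the top-right block would be $O(\varepsilon)$ instead of the $O(\varepsilon^2)$ required by $\mathcal D$. That is exactly the block you flag as the main obstacle. The fact that $\tilde\nabla$ preserves the splitting (your use of $\nabla\gamma=0$) controls $\tilde\nabla U$, but it does not by itself tell you which blocks $\tilde T$ occupies.

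The missing ingredient is Lemma \ref{l:torsion}: the vertical distribution lies in the kernel of $\tilde T$, and the range of $\tilde T$ is vertical. This follows from three facts: $\nabla^Q$ is torsion-free, on each fiber $\tilde\nabla$ is the projection of a flat connection, and $[\eta^h,w^v]=(\nabla_\eta w)^v$. With it, $\tfrac12\tilde T(U,\cdot)=\tfrac12\tilde T(U^\pi,\cdot^\pi)$ lies in the lower-left block and is even $O(\varepsilon^2)$, so your estimates close. Your linearization is not affected, since the derivative at $U=0$ of $U\mapsto du\cdot\tilde X_0-\tilde X_0\circ u$ is intrinsically $[\tilde X_0,U]$.

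A secondary point concerns regularity. The nonlinear part of $\mathcal Q_U^{-1}du\cdot\tilde X_0$ contains $(\mathcal Q_U^{-1}\mathcal P_U-\iota)\,\tilde\nabla_{\tilde X_0}U$. So $\mathcal F_\varepsilon$ does not map $C^k$ into $C^k$, and $\Theta_\varepsilon$ is not a self-map of a $C^k$-ball. The loss of a derivative comes from the nonlinearity, not from $\mathcal L^{-1}$ as you suggest. The remedy is the one in the paper: put the graph norm $\Vert U\Vert_{C^k}+\Vert[\tilde X_0,U]\Vert_{C^k}$ on the $U$-component. For that norm $\Lambda$ is an isomorphism onto $C^k$, and the nonlinearity maps into $C^k$.
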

We postpone the proof of Theorem \ref{bottkol_sec_4} to Section \ref{sec:proof-bifurcation}. In the next subsection, we prove Theorem \ref{bottkol_sec_4}. The proof will be based on Claim \eqref{e:claim}, which will be proved in Subsection \ref{sub:iso}.

\subsection{The Proof of the Bifurcation Theorem \ref{bifurcation_theorem_morse_bott}}\label{subsec:duality}
Consider the path of two-forms $\omega_\varepsilon$ on $\Sigma$ given by Lemma \ref{l:tildeX}.(c). Let $U_\varepsilon$, $V_\varepsilon$, $\nu_\varepsilon$ and $u_\varepsilon:=\exp(U_\varepsilon)$ be as in Theorem \ref{bottkol_sec_4}. By the fact that $u_\varepsilon$ is homotopic to the identity and $\omega_\varepsilon$ is cohomologous to $\pi^*\sigma$ by Lemma \ref{l:tildeX}, we know that there exists a one-form $\lambda_\varepsilon$ such that
\begin{equation}\label{e:uomega}
u_\varepsilon^*\omega_\varepsilon=d\lambda_\varepsilon+\pi^*\sigma.
\end{equation}
We define the reduced action functional
\begin{equation}
S_\varepsilon\colon \Sigma_{\min}\to\mathbb R,\qquad S_\varepsilon(z):=\int_{\mathbb R/T_{\min}\mathbb Z}\eta_z^*\lambda_\varepsilon,
\end{equation}
where $\eta_z\colon \mathbb R/T_{\min}\mathbb Z\to\Sigma_{\min}$ is the orbit $\eta_z(t):=\Phi^t_{\tilde X_0}(z)$ of $\tilde X_0$ through $z$. Notice that $S_\varepsilon\in\bar C^\infty(\Sigma_{\min})$ and therefore $S_\varepsilon=\bar S_\varepsilon\circ\bar\pi$ for some $\bar S_\varepsilon\in C^\infty(R)$. We define $\eta_z^\varepsilon:=u_\varepsilon\circ\eta_z$ and compute the differential of $S_\varepsilon$ as
\begin{equation*}
\begin{aligned}
d_zS_\varepsilon\cdot\xi&=\int_0^{T_{\min}}d\lambda_\varepsilon\Big(\tilde X_0(\eta_z(t)),d_z\Phi^t_{\tilde X_0}\cdot\xi\Big)dt\\
&=\int_0^{T_{\min}}u_\varepsilon^*\omega_\varepsilon\Big(\tilde X_0(\eta_z(t)),d_z\Phi^t_{\tilde X_0}\cdot\xi\Big)dt\\
&=\int_0^{T_{\min}}(\omega_\varepsilon)_{\eta^\varepsilon_z(t)}\Big(d_{\eta_z(t)}u_\varepsilon\cdot\tilde X_0(\eta_z(t)),d_{\eta_z(t)}u_\varepsilon \cdot d_z\Phi^t_{\tilde X_0}\cdot\xi\Big)dt\\
&=\int_0^{T_{\min}}(\omega_\varepsilon)_{\eta^\varepsilon_z(t)}\Big(\mathcal P_{U_\varepsilon}(\eta_z(t))\cdot V_\varepsilon(\eta_z(t)),d_{\eta_z(t)}u_\varepsilon \cdot d_z\Phi^t_{\tilde X_0}\cdot\xi\Big)dt\\
&=\int_0^{T_{\min}}(\omega_\varepsilon)_{\eta^\varepsilon_z(t)}\Big(\mathcal P_{U_\varepsilon}(\eta_z(t))\cdot d_z\Phi^t_{\tilde X_0}\cdot V_\varepsilon(z),d_{\eta_z(t)}u_\varepsilon \cdot d_z\Phi^t_{\tilde X_0}\cdot\xi\Big)dt,
\end{aligned}
\end{equation*}
where in the first step we used Cartan's magic formula, in the second step we used \eqref{e:uomega} together with the fact that $\tilde X_0\in\ker\pi^*\sigma$, in the third step we used the definition of pull-back, in the fourth step we used \eqref{e:mainbott} together with the fact that $\tilde X_\varepsilon\in \ker\omega$ by Lemma \ref{l:tildeX}.(c), and in the fifth step we used that $V_\varepsilon$ is invariant under the flow of $\tilde X_0$ by Theorem \ref{bottkol_sec_4}.
Therefore, if we define the duality homomorphism
$\mathcal K_\varepsilon\colon \ker\tilde\tau|_{\Sigma_{\min}}\to (\ker\tilde\tau|_{\Sigma_{\min}})^*$ as \begin{equation}
\mathcal K_\varepsilon(w)\cdot\xi=\int_0^{T_{\min}}(\omega_\varepsilon)_{\eta^\varepsilon_z(t)}\Big(\mathcal P_{U_\varepsilon}(\eta_z(t))\cdot d_z\Phi^t_{\tilde X_0}\cdot w,d_{\eta_z(t)}u_\varepsilon \cdot d_z\Phi^t_{\tilde X_0}\cdot\xi\Big)dt,
 \end{equation}
 we see that 
 \begin{equation}
d_zS_\varepsilon=\mathcal K_\varepsilon(V_\varepsilon(z)).
 \end{equation}
 We claim that
\begin{equation}\label{e:claim}
\text{$\mathcal K_\varepsilon$ is an isomorphism for $\varepsilon$ small enough}.      
\end{equation}
Given the claim, let us finish the proof of the theorem. Let $z\in \Sigma_{\min}$ be such that $V_\varepsilon(z)=0$. Since $V_\varepsilon$ is invariant, then $V_\varepsilon(\delta_z(t))=0$ for all $t$. By \eqref{e:mainbott}, we see that $\delta_z^\varepsilon$ is a periodic orbit of $\tilde X_\varepsilon$ since $\tilde X_\varepsilon$ and $du_\varepsilon\cdot \tilde X_0$ are parallel along $\delta_z^\varepsilon$. Since $\nu_\varepsilon=1+O(\varepsilon)$, we see that the period of $\delta_z^\varepsilon$ is $\tilde T_\varepsilon=T_{\min}+O(\varepsilon)$. Finally, by the claim, the zeros of $V_\varepsilon$ modulo the action of $\Phi_{\tilde X_0}$ are in one-to-one correspondence with the zeros of $dS_\varepsilon$ modulo the action of $\Phi_{\tilde X_0}$. These are in one-to-one correspondence with the zeros of $d\bar S_\varepsilon$. A lower bound for the number of these zeros is $\mathrm{CupLength}(R)+1$, which is equal to $\mathrm{CupLength}(Q)+k_{\tilde a_1}$ by Lemma \ref{l:minsub} and \cite[Equation (7)]{Kerman}.
 \subsection{The Proof of Claim \eqref{e:claim}: The map $\mathcal K_\varepsilon$ is an Isomorphism}\label{sub:iso}
The duality homomorphism $\mathcal K_\varepsilon\colon \ker\tilde\tau|_{\Sigma_{\min}}\to (\ker\tilde\tau|_{\Sigma_{\min}})^*$ is a path of linear maps. Our aim is to prove Claim \eqref{e:claim}, that is, to show that $\mathcal K_\varepsilon$ is invertible for all $\varepsilon>0$ small enough. For this purpose, let us define $\mathcal B$ as the class of paths of homomorphisms 
\begin{equation}
M_\varepsilon\colon \ker\tilde\tau|_{\Sigma_{\min}}\to (\ker\tilde\tau|_{\Sigma_{\min}})^*,\qquad  \forall\,\varepsilon>0 \text{ small enough.}
\end{equation}
Let us define the element
\begin{equation}
N_\varepsilon:=\begin{pmatrix}
        \sigma&0\\
        0&\varepsilon^2\rho
    \end{pmatrix}\in\mathcal B
\end{equation}
and the following two subclasses
\begin{equation}
\begin{aligned}
\mathcal E&:=\left\{M\in\mathcal B\ \Big|\ M_\varepsilon=\begin{pmatrix}
        O(\varepsilon)&O(\varepsilon^2)\\
        O(\varepsilon^2)&O(\varepsilon^3)
    \end{pmatrix}\right\},\\
\mathcal F&:=\left\{M\in\mathcal B\ \Big|\ M_\varepsilon\in N_\varepsilon+\mathcal E\right\},
\end{aligned}
\end{equation}
where the block decomposition
The invertibility of $\mathcal K_\varepsilon$ is based on the following two results.
\begin{lemma}
If $M_\varepsilon\in \mathcal F$, then $M_\varepsilon$ is invertible for all $\varepsilon>0$ small enough.
\end{lemma}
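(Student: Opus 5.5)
Throughout, $M_\varepsilon$ acts fiberwise between the bundles $\ker\tilde\tau|_{\Sigma_{\min}}$ and $(\ker\tilde\tau|_{\Sigma_{\min}})^*$. The block decompositions are taken with respect to the horizontal-vertical splitting of $\nabla$ restricted to $\ker\tilde\tau$:
\begin{equation*}
\ker\tilde\tau_z=\mathcal H_z\oplus \big(T_v\Sigma_{\min,q}\cap\ker\tilde\tau\big),
\end{equation*}
and dually on the cotangent side. The plan is to factor out the $\varepsilon$-scaling so that the problem reduces to a perturbation of a fixed invertible operator. Let $D_\varepsilon:=\mathrm{diag}(1,\varepsilon)$ in this splitting. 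For $M_\varepsilon\in\mathcal F$ we consider the conjugated map
\begin{equation*}
\widehat M_\varepsilon:=D_\varepsilon^{-1}\circ M_\varepsilon\circ D_\varepsilon^{-1}.
\end{equation*}
Here the outer factor is the dual scaling on $(\ker\tilde\tau)^*$, so $\widehat M_\varepsilon$ is the bilinear form $M_\varepsilon$ evaluated on rescaled vectors $(\xi^h,\varepsilon^{-1}\xi^v)$.

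Blockwise, $N_\varepsilon$ becomes $\mathrm{diag}(\sigma,\rho)$, which does not depend on $\varepsilon$. The perturbation from $\mathcal E$ becomes
\begin{equation*}
\begin{pmatrix}O(\varepsilon)&\varepsilon^{-1}O(\varepsilon^2)\\ \varepsilon^{-1}O(\varepsilon^2)&\varepsilon^{-2}O(\varepsilon^3)\end{pmatrix}=\begin{pmatrix}O(\varepsilon)&O(\varepsilon)\\ O(\varepsilon)&O(\varepsilon)\end{pmatrix}.
\end{equation*}
Hence $\widehat M_\varepsilon=\widehat N+O(\varepsilon)$ with $\widehat N:=\mathrm{diag}(\sigma,\rho)$ restricted appropriately. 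The orders of $\mathcal E$ are exactly those that make this rescaling uniform.

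The next step is to check that $\widehat N$ is a fiberwise isomorphism $\ker\tilde\tau\to(\ker\tilde\tau)^*$. On the horizontal block this holds because $\sigma_q$ is nondegenerate on $T_qQ\cong\mathcal H_z$. On the vertical block we need $\rho_q$ to be nondegenerate on $T_v\Sigma_{\min,q}\cap\ker\tilde\tau$. I expect this to be the main (mild) obstacle. It follows from the structure of $\Sigma_{\min,q}$: it is the unit sphere in the $\tilde a_1$-eigenspace $E^1_q$ of $\tilde A_q$, and $E^1_q$ is a $\rho$-symplectic, $\tilde A_q$-invariant subspace. The tangent space $T_v\Sigma_{\min,q}$ is the $\gamma$-orthogonal of $v$ in $E^1_q$. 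Intersecting with $\ker\tilde\tau=\ker\rho(v,\cdot)$ gives the subspace of $E^1_q$ that is both $\gamma$-orthogonal to $v$ and $\rho$-orthogonal to $v$. Since $\rho(v,\cdot)=\gamma(\tilde A^{-1}v,\cdot)$ up to the positive factor $\tilde a_1$, this is the orthogonal complement of $\mathrm{span}\{v,\tilde A_qv\}$ in $E^1_q$. That complement is a symplectic subspace because $\mathrm{span}\{v,\tilde A_qv\}$ is symplectic and $\tilde A_q|_{E^1_q}$ is a multiple of a compatible complex structure. So $\widehat N$ is invertible.

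Moreover $\widehat N$ depends smoothly on the point of the compact manifold $\Sigma_{\min}$, so $\|\widehat N^{-1}\|$ is uniformly bounded. By a Neumann series, $\widehat M_\varepsilon=\widehat N(1+\widehat N^{-1}O(\varepsilon))$ is invertible at every point once $\varepsilon$ is small enough, uniformly on $\Sigma_{\min}$. Since $M_\varepsilon=D_\varepsilon\widehat M_\varepsilon D_\varepsilon$ and $D_\varepsilon$ is invertible for $\varepsilon>0$, $M_\varepsilon$ is invertible for all small $\varepsilon>0$.
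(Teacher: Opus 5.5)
Your proof is correct, and it has the same skeleton as the paper's. First you show the leading part $N_\varepsilon$ is invertible; the only nontrivial input is that $\rho$ is nondegenerate on the vertical part of $\ker\tilde\tau$ along $\Sigma_{\min}$. Then you treat $\mathcal E$ as a perturbation.

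The difference is in the perturbation step. The paper factors from one side, $M_\varepsilon=N_\varepsilon\bigl(1+N_\varepsilon^{-1}(M_\varepsilon-N_\varepsilon)\bigr)$. Because $N_\varepsilon^{-1}$ carries a factor $\varepsilon^{-2}$ on the vertical rows, the lower-left block of the correction is only $O(1)$. This is therefore not a small perturbation of the identity, and the paper uses the near-triangular block structure in a determinant expansion to get $1+O(\varepsilon)$. Your symmetric conjugation by $\mathrm{diag}(1,\varepsilon)$ splits the weight evenly between the two arguments, so every block of $\mathcal E$ becomes $O(\varepsilon)$. A uniform Neumann series over the compact $\Sigma_{\min}$ then finishes. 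This is slightly cleaner, and it shows directly why the orders in $\mathcal E$ are the natural ones.

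For the vertical block, the two arguments also differ slightly, and both work. The paper argues that $\ker\tilde\tau$ is transverse to the characteristic line $\mathbb R\tilde X_0$ of $\rho$ on the sphere $\Sigma_{\min,q}$ inside the symplectic space $E_{\min,q}$ (your $E^1_q$). You instead identify the block explicitly as the $\gamma$-orthogonal complement of $\mathrm{span}\{v,\tilde A_qv\}$ in $E^1_q$, which is invariant under the $\rho$-compatible complex structure $\tilde A_q/\tilde a_1$.

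There are two harmless slips, neither of which affects the conclusion since only kernels matter:
\begin{itemize}
\item $E^1_q$ should be the real invariant subspace for the eigenvalues $\pm i\tilde a_1$.
\item The correct identity is $\rho(v,\cdot)=-(\det A_q)^{-1/(2k)}\gamma(\tilde A_q^{-1}v,\cdot)$, not a multiple by $\tilde a_1$.
\end{itemize}
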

\begin{proof}
We start by observing that $N_\varepsilon$ is invertible for all $\varepsilon>0$. Indeed, $\sigma$ is readily invertible on $\mathcal H$ since $\sigma$ is symplectic on $Q$. Moreover, $\rho$ is invertible on $\ker\tilde\tau|_{\Sigma_{\min,q}}$. This second fact can be seen as follows. By \eqref{e:sigmanondeg}, $T_{\Sigma_{\min,q}}=E_{\min,q}\cap \Sigma_q$ where $E_{\min,q}\subset E_q$ is the $1$-eigenspace of $e^{T_{\min \tilde A_q}}$. Since $e^{T_{\min \tilde A_q}}$ is $\rho$-symplectic, the eigenspace is also $\rho$-symplectic. Since $\tilde X_0$ spans the kernel of $\rho|_{T\Sigma_q}$ and $\ker\tilde\tau$ is transverse to $\tilde X_0$, we see that $\ker\tilde\tau|_{\Sigma_{\min,q}}$ is $\rho$-symplectic, as needed.

After this preliminary observation, we consider
\begin{equation}
\begin{aligned}
M_\varepsilon=N_\varepsilon+\begin{pmatrix}
    O(\varepsilon)& O(\varepsilon^2)\\
    O(\varepsilon^2)&O(\varepsilon^3)
\end{pmatrix}&=N_\varepsilon\left(1+N_\varepsilon^{-1}\begin{pmatrix}
    O(\varepsilon)& O(\varepsilon^2)\\
    O(\varepsilon^2)&O(\varepsilon^3)
\end{pmatrix}\right)\\
&=N_\varepsilon\left(1+\begin{pmatrix}
    O(\varepsilon)& O(\varepsilon^2)\\
    O(1)&O(\varepsilon)
\end{pmatrix}\right)\\
&=N_\varepsilon\begin{pmatrix}
    1+O(\varepsilon)& O(\varepsilon^2)\\
    O(1)&1+O(\varepsilon)
\end{pmatrix}
\end{aligned}
\end{equation}
Therefore, it is enough to show that the right-most matrix in the last step is invertible. Indeed, using the multilinearity of the determinant in the first rows, we get
\begin{equation*}
\det\begin{pmatrix}
1+O(\varepsilon)& O(\varepsilon^2)\\
    O(1)&1+O(\varepsilon)
\end{pmatrix}=\det\begin{pmatrix}
1& 0\\
    O(1)&1+O(\varepsilon)
\end{pmatrix}+O(\varepsilon)=1+O(\varepsilon),
\end{equation*}
which is different from zero if $\varepsilon>0$ is small enough.
\end{proof}
\begin{lemma}
The path $\mathcal K_\varepsilon$ belongs to $T_{\min}\mathcal F$.
\end{lemma}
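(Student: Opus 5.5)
We substitute the estimates of Theorem \ref{bottkol_sec_4} and Lemma \ref{l:tildeX}.(c) into the integrand defining $\mathcal K_\varepsilon$, and then average along the orbit.

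\emph{Rewriting the integrand.} The parallel transport $\mathcal Q_{U_\varepsilon}$ carries the horizontal-vertical splitting at $\eta_z(t)$ to that at $\eta^\varepsilon_z(t)$ up to a small error. Indeed, $U_\varepsilon=O(\varepsilon)$ and $U_\varepsilon^\pi=O(\varepsilon^2)$, and $\tilde\nabla$ is built from $\nabla$. We therefore write
\begin{equation*}
\mathcal P_{U_\varepsilon}=\mathcal Q_{U_\varepsilon}(\iota+D_1),\qquad du_\varepsilon=\mathcal Q_{U_\varepsilon}(\iota+D_2),\qquad D_1,D_2\in\mathcal D.
\end{equation*}
The integrand becomes $(\mathcal Q_{U_\varepsilon}^*\omega_\varepsilon)\big((\iota+D_1)\,d\Phi^t w,(\iota+D_2)\,d\Phi^t\xi\big)$.

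\emph{Pulling back $\omega_\varepsilon$.} We then claim that $\mathcal Q_{U_\varepsilon}^*(\omega_\varepsilon)_{u_\varepsilon}$ has block form
\begin{equation*}
\begin{pmatrix}\sigma+O(\varepsilon)&O(\varepsilon^2)\\O(\varepsilon^2)&\varepsilon^2\rho+O(\varepsilon^3)\end{pmatrix}
\end{equation*}
at the base point. This uses three facts:
\begin{enumerate}[(i)]
\item Lemma \ref{l:tildeX}.(c);
\item the base point moves horizontally by only $O(\varepsilon^2)$ and vertically by $O(\varepsilon)$;
\item the vertical block $\varepsilon^2\rho$ changes by only $O(\varepsilon^3)$ under an $O(\varepsilon)$ displacement.
\end{enumerate}
The mixed blocks stay $O(\varepsilon^2)$ because the displacement correction to them is multiplied by the $\varepsilon^2$ of the vertical scale.

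\emph{Block bookkeeping.} Next we multiply $(\iota+D_1)^T\,\Omega\,(\iota+D_2)$ with $D_i=\begin{pmatrix}O(\varepsilon)&O(\varepsilon^2)\\O(\varepsilon)&O(\varepsilon)\end{pmatrix}$ and check each block.
\begin{itemize}
\item The horizontal-horizontal block picks up $O(\varepsilon)$ from $\sigma D$. The terms $D^T\varepsilon^2\rho D$ contribute $O(\varepsilon^4)$.
\item The horizontal-vertical block is $O(\varepsilon^2)$. Its contributions are $\sigma\cdot O(\varepsilon^2)$ from the upper-right entry of $D$, $O(\varepsilon)\cdot\varepsilon^2\rho$, and the $O(\varepsilon^2)$ entries of $\Omega$.
\item The vertical-vertical block is $\varepsilon^2\rho+O(\varepsilon^3)$. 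Its contributions are $\varepsilon^2\rho\,O(\varepsilon)$ and $O(\varepsilon^2)^T\sigma O(\varepsilon^2)=O(\varepsilon^4)$.
\end{itemize}
This step is the main obstacle. The vertical-horizontal entries $O(\varepsilon)$ of $D$ pair with $\sigma$ only through the horizontal row. Because the upper-right entry of $D$ is $O(\varepsilon^2)$ rather than $O(\varepsilon)$, the bad product $\sigma\cdot O(\varepsilon)$ lands only in the horizontal-horizontal and horizontal-vertical blocks, where it is allowed. The precise structure of $\mathcal D$ is used exactly here.

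\emph{Averaging.} Finally, $d\Phi^t_{\tilde X_0}$ is block diagonal $\mathrm{diag}(1,e^{t\tilde A})$ by Lemma \ref{l:connection}. It preserves $\sigma$, $\rho$ and the error classes, so integrating over $[0,T_{\min}]$ yields $T_{\min}N_\varepsilon+\mathcal E$. Restricting to $\ker\tilde\tau$ does not affect the blocks. Hence $\mathcal K_\varepsilon\in T_{\min}\mathcal F$.
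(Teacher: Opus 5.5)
Your route is the paper's: write $\mathcal P_{U_\varepsilon}$ and $du_\varepsilon$ as $\mathcal Q_{U_\varepsilon}(\iota+\mathcal D)$, show $\mathcal Q_{U_\varepsilon}^*\omega_\varepsilon\in\tilde{\mathcal F}$, multiply blocks, use $d\Phi^t_{\tilde X_0}=\mathrm{diag}(1,e^{t\tilde A})$, and average. Your block multiplication in the bullet list and your averaging step are correct. The step whose justification does not hold as written is the pull-back. You say $\mathcal Q_{U_\varepsilon}$ preserves the horizontal--vertical splitting only ``up to a small error''. You then attribute the $O(\varepsilon^2)$ mixed blocks to a displacement correction scaled by $\varepsilon^2$. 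That is not the mechanism. Any component of $\mathcal Q_{U_\varepsilon}$ sending vertical vectors to horizontal ones feeds the $O(1)$ block $\sigma$ into the mixed blocks and into the vertical block, with no $\varepsilon^2$ factor. If that component were of the size of $U_\varepsilon$, i.e.\ $O(\varepsilon)$, the mixed blocks would only be $O(\varepsilon)$ and the vertical block would carry an $O(\varepsilon^2)$ error. Then $\mathcal Q_{U_\varepsilon}^*\omega_\varepsilon$ would leave $\tilde{\mathcal F}$, and none of your facts (i)--(iii) rules this out.

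What makes the step work is that $\mathcal Q_{U_\varepsilon}$ is \emph{exactly} block diagonal, $\mathcal Q_{U_\varepsilon}=\mathrm{diag}(\mathcal Q^\pi_{U_\varepsilon},\mathcal Q^\nabla_{U_\varepsilon})$ with $O(1)$ blocks. This holds because $\tilde\nabla$ in \eqref{e:connection} acts as $\pi^*\nabla^Q$ on $\mathcal H$ and as the projected $\pi^*\nabla$ on the vertical part of $T\Sigma$, so its parallel transport never mixes the two. With this, $\mathcal Q_{U_\varepsilon}^*\hat{\mathcal E}\subset\tilde{\mathcal E}$. A Taylor expansion of $\sigma$ and $\rho$ using only $U_\varepsilon=O(\varepsilon)$ then gives $\mathcal Q_{U_\varepsilon}^*\hat N_\varepsilon\in\tilde N_\varepsilon+\tilde{\mathcal E}$. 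Your fact (ii) about the horizontal displacement is not what is needed here; it enters only through $\mathcal D$. A smaller slip of the same kind appears in your closing remark on the bookkeeping. A term $\sigma\cdot O(\varepsilon)$ lands only in the horizontal--horizontal block; in the horizontal--vertical block it would \emph{not} be allowed. It is precisely the $O(\varepsilon^2)$ upper-right entry of $\mathcal D$ that makes that contribution $\sigma\cdot O(\varepsilon^2)$, as your own bullet list correctly records.
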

\begin{proof}
Let $\mathcal W$ be a neighborhood of $\Sigma_{\min}$ inside $\Sigma$. We define analogs of $\mathcal B$, $N$, $\mathcal E$, $\mathcal F$ on $T\Sigma|_{\Sigma_{\min}}$ and on $\mathcal W$, and decorate them with a tilde and a hat, respectively. Thus, let $\tilde{\mathcal B}$ and $\hat{\mathcal B}$ be the classes of paths of homomorphisms 
\begin{equation}
\tilde M_\varepsilon\colon T\Sigma|_{\Sigma_{\min}}\to (T\Sigma|_{\Sigma_{\min}})^*,\qquad  \hat M_\varepsilon\colon T\mathcal W\to T^*\mathcal W,\qquad\varepsilon>0.
\end{equation}
Let 
\begin{equation}
\tilde N_\varepsilon:=\begin{pmatrix}
        \sigma&0\\
        0&\varepsilon^2\rho
    \end{pmatrix}\in\tilde{\mathcal B},\qquad \hat N_\varepsilon:=\begin{pmatrix}
        \sigma&0\\
        0&\varepsilon^2\rho
    \end{pmatrix}\in\hat{\mathcal B}
\end{equation}
and
\begin{equation*}
\begin{aligned}
\tilde{\mathcal E}&:=\left\{\tilde M\in\tilde{\mathcal B}\ \Big|\ \tilde M_\varepsilon=\begin{pmatrix}
        O(\varepsilon)&O(\varepsilon^2)\\
        O(\varepsilon^2)&O(\varepsilon^3)
    \end{pmatrix}\right\},\qquad
\tilde{\mathcal F}:=\left\{\tilde M\in\tilde{\mathcal B}\ \Big|\ \tilde M_\varepsilon\in\tilde N_\varepsilon+\tilde{\mathcal E}\right\};\\
\hat{\mathcal E}&:=\left\{\hat M\in\hat{\mathcal B}\ \Big|\ \hat M_\varepsilon=\begin{pmatrix}
        O(\varepsilon)&O(\varepsilon^2)\\
        O(\varepsilon^2)&O(\varepsilon^3)
    \end{pmatrix}\right\},\qquad
\hat{\mathcal F}:=\left\{\hat M\in\hat{\mathcal B}\ \Big|\ \hat M_\varepsilon\in\hat N_\varepsilon+\hat{\mathcal E}\right\}.
\end{aligned}
\end{equation*}
We will obtain $\mathcal K_\varepsilon$ from $\hat N_\varepsilon$ in subsequent steps and describe the change of class at each step.\medskip

\textit{Step 1: $\omega_\varepsilon\in\hat{\mathcal F}$.} \newline This statement follows from Lemma \ref{l:tildeX}.(c).\medskip

\textit{Step 2: $\mathcal Q_{U_\varepsilon}^*\omega_\varepsilon\in\tilde {\mathcal F}$.}\newline
By Theorem \ref{bottkol_sec_4}, $\mathcal Q_{U_\varepsilon}$ takes values in $T\mathcal W$ for $\varepsilon$ small enough since $\mathcal Q_{U_\varepsilon}=O(1)$. Therefore, we get maps $\mathcal Q_{U_\varepsilon}\colon T\Sigma|_{\Sigma_{\min}}\to T\mathcal W$ and $\mathcal Q_0$ is the inclusion. By \eqref{e:connection}, we have the block decomposition 
\begin{equation}
\mathcal Q_{U_\varepsilon}=\begin{pmatrix}
\mathcal Q_{U_\varepsilon}^\pi&0\\
0&\mathcal Q_{U_\varepsilon}^\nabla\\
\end{pmatrix}=\begin{pmatrix}
O(1)&0\\
0&O(1).
\end{pmatrix}
\end{equation}
Thus, $\mathcal Q_{U_\varepsilon}^*\hat{\mathcal E}\subset\tilde{\mathcal E}$. By Theorem \ref{bottkol_sec_4}, we also know that $U_\varepsilon=O(\varepsilon)$. Hence, a Taylor expansion yields
\begin{equation}
\mathcal Q_{U_\varepsilon}^*\hat N_\varepsilon=\begin{pmatrix}
(\mathcal Q^\pi_{U_\varepsilon})^*\sigma&0\\
0&\varepsilon^2(\mathcal Q^\nabla_{U_\varepsilon})^*\rho
\end{pmatrix}\in \tilde N_\varepsilon+\tilde{\mathcal E}.
\end{equation}
Putting things together, 
\begin{equation}
\mathcal Q_{U_\varepsilon}^*(\hat N_\varepsilon+\hat {\mathcal E})=\mathcal Q_{U_\varepsilon}^*\hat N_\varepsilon+\mathcal Q_{U_\varepsilon}^*\hat {\mathcal E}\in (\tilde N_\varepsilon+\tilde{\mathcal E})+\tilde{\mathcal E}=\tilde N_\varepsilon+\tilde{\mathcal E}.
\end{equation}

\textit{Step 3: $(\mathcal Q_{U_\varepsilon}^*\omega_\varepsilon)\Big(\mathcal Q_{U_\varepsilon}^{-1}\cdot du_\varepsilon,\mathcal Q_{U_\varepsilon}^{-1}\cdot\mathcal P_{U_\varepsilon}\Big)\in\mathcal F$.}\newline
By Theorem \ref{bottkol_sec_4}, we know that $\mathcal Q_{U_\varepsilon}^{-1}\cdot du_\varepsilon$ and $\mathcal Q_{U_\varepsilon}^{-1}\cdot\mathcal P_{U_\varepsilon}$ belong to $\iota+\mathcal D$, where the map $\iota\colon T\Sigma_{\min}\to T\Sigma|_{\Sigma_{\min}}$ is the inclusion and $\mathcal D$ is defined in \eqref{e:mathcald}. By Step 2, it is enough to show that, if $\tilde M_\varepsilon\in\tilde{\mathcal F}$, then \begin{equation}
\tilde M_\varepsilon(\iota+\mathcal D,\iota+\mathcal D)\in\mathcal F.
\end{equation}
Since $\tilde M_\varepsilon(\iota,\iota)\in\mathcal F$, we are left to prove that
\begin{equation}
\tilde M_\varepsilon(\iota,\mathcal D),\, \tilde M_\varepsilon(\mathcal D, \iota),\, \tilde M_\varepsilon(\mathcal D,\mathcal D)\in\mathcal E.
\end{equation}
We check this by matrix multiplication:
\begin{equation}
\tilde M_\varepsilon(\iota,\mathcal D)=\begin{pmatrix}
O(1)&O(\varepsilon^2)\\
O(\varepsilon^2)&O(\varepsilon^2)
\end{pmatrix}\begin{pmatrix}
O(\varepsilon)&O(\varepsilon^2)\\
O(\varepsilon)&O(\varepsilon)
\end{pmatrix}=\begin{pmatrix}
    O(\varepsilon)&O(\varepsilon^2)\\
    O(\varepsilon^2)&O(\varepsilon^3)
\end{pmatrix},
\end{equation}
which is in $\mathcal E$ by definition. Similarly, we get $\tilde M_\varepsilon(\iota,\mathcal D)\in\mathcal E$. Finally,
\begin{equation}
\tilde M_\varepsilon(\mathcal D,\mathcal D)=\begin{pmatrix}
O(\varepsilon)&O(\varepsilon)\\
O(\varepsilon^2)&O(\varepsilon)
\end{pmatrix}\begin{pmatrix}
O(\varepsilon)&O(\varepsilon^2)\\
O(\varepsilon^2)&O(\varepsilon^3)
\end{pmatrix}=\begin{pmatrix}
    O(\varepsilon^2)&O(\varepsilon^3)\\
    O(\varepsilon^3)&O(\varepsilon^4)
\end{pmatrix},
\end{equation}
which is again in $\mathcal E$.\medskip

\textit{Step 4: $(\mathcal Q_{U_\varepsilon}^*\omega_\varepsilon)\Big(\mathcal Q_{U_\varepsilon}^{-1}\cdot du_\varepsilon\cdot d\Phi^t_{\tilde X_0},\mathcal Q_{U_\varepsilon}^{-1}\cdot\mathcal P_{U_\varepsilon}d\Phi^t_{\tilde X_0}\Big)\in\mathcal F, \ \forall\,t\in\mathbb R$.}\newline

By Step 3, it is enough to show that if $M_\varepsilon\in\mathcal F$, then
\begin{equation}
M_\varepsilon(d\Phi_{\tilde X_0}^t,d\Phi_{\tilde X_0}^t)\in\mathcal F.
\end{equation}
Recall that by \eqref{e:dphi} we have
\begin{equation}
d_{(q,v)}\Phi^t_{\tilde X_0}=\begin{pmatrix}
1&0\\
0& e^{t\tilde A_q}
\end{pmatrix}
\end{equation}
and that $e^{t\tilde A_q}$ preserves $\rho_q$ by Lemma \ref{l:connection}. Therefore,
\begin{equation}
M_\varepsilon(d\Phi_{\tilde X_0}^t,d\Phi_{\tilde X_0}^t)=N_\varepsilon(d\Phi_{\tilde X_0}^t,d\Phi_{\tilde X_0}^t)+\mathcal E(d\Phi_{\tilde X_0}^t,d\Phi_{\tilde X_0}^t)=N_\varepsilon+\mathcal E.
\end{equation}

\textit{Step 5: $\displaystyle\mathcal K_\varepsilon=\int_0^{T_{\min}}(\mathcal Q_{U_\varepsilon}^*\omega_\varepsilon)\Big(\mathcal Q_{U_\varepsilon}^{-1}\cdot du_\varepsilon\cdot d\Phi^t_{\tilde X_0},\mathcal Q_{U_\varepsilon}^{-1}\cdot\mathcal P_{U_\varepsilon}d\Phi^t_{\tilde X_0}\Big)dt\in T_{\min}\mathcal F$.}\newline
This statement follows from Step 4 and the fact that if $M_\varepsilon(t)\in \mathcal F$ for $t\in[0,T_{\min}]$, then its average on this interval also belongs to $\mathcal F$.
\end{proof}

\section{Theorem \ref{thmA}, Intermezzo: Proof of Bottkol's Normal Form}\label{sec:proof-bifurcation}
\subsection{The Connection $\tilde\nabla$} The purpose of this section is to prove the Normal Form Theorem \ref{bottkol_sec_4}. Recall the definitions \eqref{e:connection_E} and \eqref{e:connection} of the connections $\tilde\nabla^E$ and $\tilde\nabla$ on $TE$ and $T\Sigma$. We start by describing the torsion $\tilde T$ of the connection $\tilde\nabla$. The result is an adaptation of \cite[Lemma 2]{Dom}.
\begin{lemma}\label{l:torsion}
The kernel of the torsion $\tilde T$ contains the vertical distribution. The range of the torsion $\tilde T$ is contained in the vertical distribution. In other words,
\begin{equation}
\tilde T=\begin{pmatrix}
0&0\\
\tilde T^\nabla_\pi&0
\end{pmatrix}.
\end{equation}
\end{lemma}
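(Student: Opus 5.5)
We first compute the torsion $\tilde T^E$ of $\tilde\nabla^E=\pi^*\nabla^Q\oplus\pi^*\nabla$ on $TE\cong\pi^*TQ\oplus\pi^*E$, where the identification uses the horizontal--vertical splitting of $\nabla$. Then we project it to $T\Sigma$. We will use the standard formula $\tilde T^E(X,Y)=\tilde\nabla^E_XY-\tilde\nabla^E_YX-[X,Y]$, evaluated on lifts: it suffices to check the three cases of two horizontal lifts $\zeta^h,\eta^h$, a horizontal and a vertical lift $\zeta^h,w^v$, and two vertical lifts $w^v,w'^v$, with $\zeta,\eta$ vector fields on $Q$ and $w,w'$ sections of $E$ (vertical lifts taken as fiberwise-constant along fibers).

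\emph{Two vertical lifts.} Both lifts are pulled back from sections of $E$, so $\tilde\nabla^E_{w^v}w'^v=(\nabla_{d\pi w^v}w')^v=0$, since $d\pi(w^v)=0$. The bracket of fiberwise-constant vertical fields also vanishes, so $\tilde T^E(w^v,w'^v)=0$.

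\emph{Mixed case.} We have $\tilde\nabla^E_{w^v}\zeta^h=0$ for the same reason, and $\tilde\nabla^E_{\zeta^h}w^v=(\nabla_\zeta w)^v$. The standard identity $[\zeta^h,w^v]=(\nabla_\zeta w)^v$ for linear connections then gives $\tilde T^E(\zeta^h,w^v)=0$.

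\emph{Two horizontal lifts.} We get $(\nabla^Q_\zeta\eta-\nabla^Q_\eta\zeta)^h-[\zeta^h,\eta^h]$, and the bracket satisfies $[\zeta^h,\eta^h]_{(q,v)}=[\zeta,\eta]^h-(R^\nabla(\zeta,\eta)v)^v$. Since $\nabla^Q$ is torsion-free, the horizontal parts cancel, leaving $\tilde T^E(\zeta^h,\eta^h)=(R^\nabla(\zeta,\eta)v)^v$, which is purely vertical. Because torsion is tensorial, this shows that $\tilde T^E$ kills the vertical distribution and takes values in it.

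Next we pass to $\Sigma$. For $X,Y\in T\Sigma$ we have $\tilde\nabla_XY=P(\tilde\nabla^E_XY)$, where $P$ is the fiberwise $\gamma$-orthogonal projection onto $T\Sigma$. Since $[X,Y]$ is tangent to $\Sigma$, this gives $\tilde T=P\circ\tilde T^E|_{T\Sigma}$. By \eqref{e:split} and $\nabla\gamma=0$, $T\Sigma=\mathcal H\oplus T_v\Sigma_q$, and $P$ acts as the identity on $\mathcal H$ and as the $\gamma_q$-orthogonal projection onto $T_v\Sigma_q$ on vertical vectors. In particular $P$ preserves verticality. It follows that the kernel of $\tilde T$ contains the vertical distribution of $\Sigma$ and that its range is vertical, with $\tilde T^\nabla_\pi(\zeta,\eta)=P\big((R^\nabla(\zeta,\eta)v)^v\big)$. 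In fact $R^\nabla(\zeta,\eta)$ is $\gamma$-skew because $\nabla\gamma=0$, so $R^\nabla(\zeta,\eta)v$ is already tangent to $\Sigma_q$.

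The main obstacle is bookkeeping with the convention for the bracket of horizontal lifts, namely its sign and the fact that it is exactly the curvature term. We will verify it in a local trivialization with connection form $\Theta$, where $\zeta^h=(\zeta,-\Theta(\zeta)v)$, by a direct computation. The sign is irrelevant for the lemma, however, since only the block structure is asserted.
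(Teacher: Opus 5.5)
Your proposal is correct and follows essentially the same route as the paper. You reduce to $\tilde T^E$ via $\tilde T=P\circ\tilde T^E|_{T\Sigma}$ and check the three cases on horizontal and vertical lifts, with the mixed case resting on $[\zeta^h,w^v]=(\nabla_\zeta w)^v$. You are slightly more explicit in two places: you identify the surviving block as $(R^\nabla(\zeta,\eta)v)^v$, where the paper only uses $d\pi\,\tilde T^E(\zeta^h,\eta^h)=T^Q(\zeta,\eta)=0$, and you spell out why the projection preserves the horizontal--vertical block structure.
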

\begin{proof}
By \eqref{e:connection_E}, $\tilde\nabla E=\pi^*\nabla^Q\oplus\pi^*\nabla$, where the splitting is with respect to the horizontal-vertical distribution of $\nabla$. The symbol $\nabla$ denotes a connection on $E$ preserving $\gamma$, and $\nabla^Q$ is a connection on $TQ$ such that its torsion $T^Q$ vanishes. By \eqref{e:connection}, $\tilde\nabla$ is the $\gamma$-orthogonal projection $\Pi$ of $\tilde\nabla^E$ from $TE|_{\Sigma}$ to $T\Sigma$. Thus the torsion of the two connections are related by $\tilde T=\Pi\tilde T^{E}$. Therefore, it is enough to prove the statement for $\tilde T^E$. 

Let $\eta,\zeta$ be two vector fields on $Q$. Let $\eta^h,\zeta^h\subset \mathcal H$ be their horizontal lift. Then
\begin{equation}
\begin{aligned}
d\pi\tilde T(\eta^h,\zeta^h)&=d\pi\Big(\pi^*\nabla^Q_{\eta^h}\zeta^h-\pi^*\nabla^Q_{\zeta^h}\eta^h-[\eta^h,\zeta^h]\Big)\\
&=\nabla^Q_{\eta}\zeta-\nabla^Q_{\zeta}\eta-[\eta,\zeta]\\
&=T^Q(\eta,\zeta),
\end{aligned}
\end{equation}
which vanishes. Hence, $\tilde T(\eta^h,\zeta^h)$ is vertical.

The restriction of $\tilde\nabla$ to a fiber $E_q$ is torsion-free as it is equal to $\pi^*\nabla$ and therefore its Christoffel symbols are constant in the vertical direction. Hence, $\tilde T(\mathcal V,\mathcal V)=0$.

Finally, let $\eta$ be a vector field on $Q$ and let $w$ be a section of $E\to Q$. We claim that $\tilde T(\eta^h,w^v)=0$, which will finish the proof. We compute
\begin{equation}
\begin{aligned}
\tilde T(\eta^h,w^v)&=\pi^*\nabla_{\eta^h}w^v-\pi^*\nabla^Q_{w^v}\eta^h-[\eta^h,w^v]\\
&=\pi^*\nabla_{\eta^h}w^v-\pi^*\nabla^Q_{w^v}\eta^h-[\eta^h,w^v]\\
&=(\nabla_{d\pi\eta^h}w)^v-(\nabla_{d\pi w^v}^Q\eta)^h-[\eta^h,w^v].    
\end{aligned}
\end{equation}
Since $d\pi\eta^h=\eta$ and $d\pi w^v=0$, we are left to show
\begin{equation}
(\nabla_\eta w)^v=[\eta^h,w^v].
\end{equation}
Since $\Phi_{w^v}^t(q,v)=(q,v+tw(q))$, we get 
\begin{equation}
d\Phi_{w^v}^t=\begin{pmatrix}
1&0\\
t\nabla w&1
\end{pmatrix}    
\end{equation}
and hence
\begin{equation*}
[\eta^h,w^v]=\frac{d}{dt}\Big|_{t=0}d\Phi_{w^v}^t\eta^h(\Phi^{-t}_{w^v})=\frac{d}{dt}\Big|_{t=0}\begin{pmatrix}
1&0\\
t\nabla w&1    
\end{pmatrix}\begin{pmatrix}
\eta)\\0
\end{pmatrix}=\begin{pmatrix}
0\\\nabla_\eta w
\end{pmatrix}.\qedhere
\end{equation*}
\end{proof}
Let $U$ be any element in $\mathfrak X(\Sigma|_{\Sigma_{\min}})$ and recall the definition of the objects $u:=\exp(U)$, $\delta_U$, $\mathcal P_U$ and $\mathcal Q_U$ from \eqref{e:u}, \eqref{e:delta}, \eqref{e:P}, \eqref{e:Q}. Recall also that $\iota\colon T\Sigma_{\min}\to T\Sigma|_{\Sigma_{\min}}$ is the inclusion. We now give the main estimates about these objects.
\begin{lemma}\label{l:puduxu}
The following expansions around $U=0$ hold
\begin{equation}
\begin{aligned}
(a)&\ \ \mathcal Q_U^{-1}\cdot \mathcal P_U=\iota+\frac{1}{2}\tilde T(U,\cdot)+O(\Vert U\Vert^2),\\
(b)&\ \ \mathcal Q_U^{-1}\cdot du=\iota+\tilde\nabla U+\tilde T(U,\tilde \nabla U)+O(\Vert U\Vert^2),\\
(c)&\ \ \mathcal Q_U^{-1}\cdot X\circ u=X+\tilde\nabla_UX+O(\Vert U\Vert^2),
\end{aligned}
\end{equation}
where $\Vert U\Vert$ denotes the $C^k$-norm of $U$ for any $k\geq 0$ and $X$ is any vector field on $\Sigma$.
\end{lemma}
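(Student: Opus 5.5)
Each of the three expansions is a first-order Taylor expansion in the parameter $s\in[0,1]$ along the $\tilde\nabla$-geodesic $\delta_{sU(z)}$. We compare everything with parallel transport. Fix $z\in\Sigma_{\min}$, write $\delta(s):=\exp(z,sU(z))$, and let $\mathcal Q(s)$ be parallel transport along $\delta|_{[0,s]}$. Every quantity we study is a path of vectors along $\delta$. We pull it back by $\mathcal Q(s)^{-1}$ to a path in the fixed vector space $T_z\Sigma$ and expand it in $s$ at $s=0$. The coefficient of $s$ is the covariant derivative $\tilde\nabla_{\dot\delta}$ at $s=0$, and $\dot\delta(0)=U(z)$. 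Smoothness of $\exp$ and of $\tilde\nabla$ makes the remainders $O(\Vert U\Vert^2)$ in $C^k$, since all objects depend smoothly on $U$ and its derivatives.

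\emph{(c).} The path $s\mapsto\mathcal Q(s)^{-1}X(\delta(s))$ has derivative $\mathcal Q(s)^{-1}\tilde\nabla_{\dot\delta}X$. Taylor's formula at $s=1$ then gives $X+\tilde\nabla_UX+O(\Vert U\Vert^2)$ directly.

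\emph{(a).} The vector $\mathcal P_U\cdot\xi=d\exp(\xi^{\tilde v})$ equals $J(1)$ for the Jacobi-type field $J(s):=\partial_r\exp(z,s(U+r\xi))|_{r=0}$, with $J(0)=0$ and $\tilde\nabla_sJ(0)=\xi$. For a connection with torsion, the variation of the geodesic family obeys
\[
\tilde\nabla_s\tilde\nabla_sJ=\tilde R(\dot\delta,J)\dot\delta+\tilde\nabla_s\big(\tilde T(\dot\delta,J)\big),
\]
using the identity $\tilde\nabla_s\partial_r-\tilde\nabla_r\partial_s=\tilde T(\partial_s,\partial_r)$. Transporting by $\mathcal Q(s)^{-1}$, the curvature term is $O(\Vert U\Vert^2)$. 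The torsion term contributes, to first order,
\[
\mathcal Q(s)^{-1}J(s)=s\xi+\int_0^s\tilde T(U,\mathcal Q(r)^{-1}J(r))\,dr+O(\Vert U\Vert^2)
=s\xi+\tfrac{s^2}{2}\tilde T(U,\xi)+O(\Vert U\Vert^2).
\]
Setting $s=1$ gives (a). I will double-check the sign convention of $\tilde T$ against the one in Lemma \ref{l:torsion}.

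\emph{(b).} We have $du\cdot\xi=\partial_r\exp(c(r),U(c(r)))$ for a curve $c$ in $\Sigma_{\min}$ with $\dot c(0)=\xi$. This variation field $J$ has $J(0)=\xi$, and $\tilde\nabla_sJ(0)=\tilde\nabla_r(sU)|_{s=0}$ plus the torsion correction. Here is the precise computation: $\tilde\nabla_s\partial_r\big|_{s=0}=\tilde\nabla_r\partial_s+\tilde T(\partial_s,\partial_r)=\tilde\nabla_\xi U+\tilde T(U,\xi)$. The same second-order equation then gives
\[
\mathcal Q^{-1}J(1)=\xi+\tilde\nabla_\xi U+\tilde T(U,\xi)+\tfrac12\tilde T(U,\tilde\nabla_\xi U)+\dots
\]
The stated formula keeps $\tilde T(U,\tilde\nabla U)$ rather than $\tilde T(U,\xi)$. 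So I must check carefully whether the $\tilde T(U,\xi)$ term is meant to be absorbed into the $O(\Vert U\Vert^2)$ convention. Most likely the intended remainder is measured with $\Vert U\Vert$ in $C^k$, so bilinear terms like $\tilde T(U,\tilde\nabla U)$ count as quadratic. Pinning down exactly which first-order torsion terms survive is the main obstacle, and it rests on an honest bookkeeping of the commutation $\tilde\nabla_s\partial_r-\tilde\nabla_r\partial_s$ and of the definition of $\iota$. Once the signs and conventions are fixed, each item is a one-line consequence of Taylor's formula in $s$.
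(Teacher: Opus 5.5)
Your arguments for (a) and (c) are the paper's. For (a) you use the torsion Jacobi equation transported back by $\mathcal Q_U^{-1}$, with $\ddot K(0)=\tilde T(U,\xi)$. For (c) you use Taylor's formula for parallel transport. Your torsion convention agrees with Lemma \ref{l:torsion}.

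The gap is in (b), and it is not a bookkeeping issue. Your computation there is correct: for $f(s,r)=\exp(c(r),sU(c(r)))$ one has $J(0)=\xi$ and $\tilde\nabla_sJ(0)=\tilde\nabla_\xi U+\tilde T(U,\xi)$. Hence $\mathcal Q_U^{-1}du\cdot\xi=\xi+\tilde\nabla_\xi U+\tilde T(U,\xi)+\tfrac12\tilde T(U,\tilde\nabla_\xi U)+O(\Vert U\Vert^2)$.

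The way out you suggest fails. The term $\tilde T(U,\xi)$ is linear in $U$, so no choice of $C^k$-norm turns it into $O(\Vert U\Vert^2)$. Only $\tilde T(U,\tilde\nabla U)$ is genuinely quadratic, which is also why its coefficient ($\tfrac12$ or $1$) is irrelevant. So (b) cannot be proved as written, and it fails whenever $\tilde T\neq0$.

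A chart check confirms this. From $u=z+U-\tfrac12\Gamma_z(U,U)+\dots$ and $\mathcal Q_U^{-1}=1+\Gamma_z(U,\cdot)+O(|U|^2)$ one gets $\mathcal Q_U^{-1}du\cdot\xi=\xi+DU\cdot\xi+\Gamma_z(U,\xi)+O(\Vert U\Vert^2)=\xi+\tilde\nabla_\xi U+\tilde T(U,\xi)+O(\Vert U\Vert^2)$.

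The paper reaches the stated form by writing $du\cdot\xi=d\exp\cdot\xi^{\tilde h}+d\exp\cdot(\tilde\nabla_\xi U)^{\tilde v}$. It then asserts that the Jacobi field representing the first summand has $\dot J(0)=0$. By the same commutation identity you used, that field actually has $\dot J(0)=\tilde T(U,\xi)$, which is exactly the dropped term.

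You should conclude with the corrected expansion $\mathcal Q_U^{-1}\cdot du=\iota+\tilde\nabla U+\tilde T(U,\cdot)+O(\Vert U\Vert^2)$, and then check that the extra term does no harm where (b) is used.

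\emph{Normal form equation.} In \eqref{e:nf}, (b) is only evaluated on $\tilde X_0$. This field is vertical and so lies in the kernel of $\tilde T$ by Lemma \ref{l:torsion}. Hence $\tilde T(U_\varepsilon,\tilde X_0)=0$ and the Inverse Function Theorem step is unchanged.

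\emph{The $\mathcal D$-estimate.} In $\mathcal Q_{U_\varepsilon}^{-1}du_\varepsilon\in\iota+\mathcal D$, the term $\tilde T(U_\varepsilon,\cdot)$ vanishes on vertical vectors and takes vertical values. So it only adds a lower-left block of size $O(\varepsilon)$, in fact $O(\varepsilon^2)$ since only $U_\varepsilon^\pi=O(\varepsilon^2)$ enters. The class $\mathcal D$ allows this.

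With this correction your argument is complete, and Theorem \ref{bottkol_sec_4} is unaffected.
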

\begin{proof}
Let $z\in\Sigma_{\min}$ and $\xi\in T_z\Sigma_{\min}$. We recall that 
\begin{equation}
\mathcal P_U(z)\cdot \xi:=d_{(z,U(z))}\exp{} \cdot \xi^{\tilde v}
\end{equation}
This map can be written in terms of Jacobi fields. Indeed, $\mathcal P_U(z)\cdot \xi=J(1)$ where $J$ is the Jacobi vector field of $\tilde \nabla$ along the geodesic $\delta_U$ and with $J(0)=0$ and $\dot J(0)=\xi$. Since $\tilde\nabla$ has torsion, the Jacobi equation is
\begin{equation}
\ddot J=\dot {\tilde T}(\dot\delta,J)+\tilde T(\dot\delta,\dot J)+\tilde R(\dot\delta,J)\dot\delta=0.    
\end{equation}
where $\tilde R$ is the curvature of $\tilde\nabla$. By defining $K:=\mathcal Q^{-1}_UJ$, we obtain a second-order ODE for $K$ given by
\begin{equation}
\begin{cases}
\ddot K= (\tilde \nabla_U(\mathcal Q^{-1}_U \tilde T))(U,K)+(\mathcal Q^{-1}_U\tilde T)(U,\dot K)+(\mathcal Q^{-1}_U\tilde R)(U,K)U,\\
K(0)=0\\
\dot K(0)=\xi.
\end{cases}
\end{equation}
From the equation, we get $\ddot K(0)=\tilde T(U,\xi)$ and therefore
\begin{equation}\label{e:qupu}
\mathcal Q_U^{-1}\mathcal P_U(z)\cdot\xi=K(1)=\xi+\frac{1}{2}\tilde T(U,\xi)+O(\Vert U\Vert^2)\cdot\xi.
\end{equation}
This shows (a). To prove (b), we observe that
\begin{equation}\label{e:duxi}
du\cdot \xi=d_{(z,U(z))}\exp{} \cdot \xi^{\tilde h}+d_{(z,U(z))}\exp{} \cdot (\tilde\nabla_\xi U)^{\tilde v}.
\end{equation}
The summand can be rewritten in terms of Jacobi fields as $d_{(z,U(z))}\exp{} \cdot \xi^{\tilde h}=J(1)$, where $J$ is the Jacobi field of $\tilde\nabla$ along the geodesic $\delta_U$ such that $J(0)=\xi$ and $\dot J(0)=0$.

Defining $K:=\mathcal Q^{-1}J$, we see that $K$ satisfies the ODE above with $K(0)=\xi$, $\dot K(0)=0$. Using the ODE, we obtain $\ddot K(0)=O(\Vert U\Vert^2)$ and therefore
\begin{equation}
\mathcal Q_U^{-1}d_{(z,U(z))}\exp{} \cdot \xi^{\tilde h}=\xi+O(\Vert U\Vert^2).
\end{equation}
The second summand of \eqref{e:duxi} can be expanded using \eqref{e:qupu} with $\tilde\nabla_\xi U$ instead of $\xi$. We arrive at the expansion (b):
\begin{equation}
\mathcal Q^{-1}_U\cdot du\cdot \xi=\xi+O(\Vert U\Vert^2)+\tilde \nabla_\xi U+\tilde T(U,\tilde\nabla_\xi U)+O(\Vert U\Vert^2).
\end{equation}

Finally, the expansion (c) stems from the fact that $\mathcal Q_U$ is the parallel transport with respect to $\tilde \nabla$.
    \end{proof}
\subsection{The Normal Form via the Inverse Function Theorem}
Let us consider the setting of Theorem \ref{bottkol_sec_4}. Let $\tilde X_\varepsilon=\tilde X_0+Y_\varepsilon$ with $Y_\varepsilon$ as in \eqref{e:tildeY}. We aim at showing the existence of $U_\varepsilon$, $V_\varepsilon$ and $\nu_\varepsilon=1+\mu_\varepsilon$ satisfying the equation
\begin{equation}\label{e:nf}
\nu_\varepsilon \tilde X_\varepsilon\circ u_\varepsilon +\mathcal P_{U_\varepsilon}\cdot V_\varepsilon-du_\varepsilon\cdot \tilde X_0=0.
\end{equation}
Since $\tilde Y_\varepsilon=O(\varepsilon)$, the existence of such objects with $\mu_\varepsilon=O(\varepsilon)$, $U_\varepsilon=O(\varepsilon)$ and $V_\varepsilon=O(\varepsilon)$ follows from an application of the Inverse Function Theorem due to Bottkol \cite{Bottkol}, see also \cite{Kerman}, \cite[Appendix B]{AB_21} or \cite{sanjay2025c2localsystolicoptimalityzoll}. The only small difference is that in the references the connection $\tilde \nabla$ is chosen to be torsion-free, while here we made a different choice to ensure that the connection preserves the horizontal-vertical splitting. However, this does not affect the result, as we now show with a computation that will also be needed to prove the remaining estimates of Theorem \ref{bottkol_sec_4}.

The Inverse Function Theorem takes place between the Banach spaces $\mathbb A$ and $\mathbb B$. The space $\mathbb A$ is the completion of
\begin{equation*}
\bar C^\infty(\Sigma_{\min})\times \mathfrak X_1(\Sigma|_{\Sigma_{\min}})\times \bar{\mathfrak X}(\ker\tilde\tau|_{\Sigma_{\min}})
\end{equation*}
with respect to the norm
\begin{equation}
\Vert (\mu,U,V)\Vert:=\Vert\mu\Vert+\Vert U\Vert+\Vert[\tilde X_0,U]\Vert+\Vert V\Vert,
\end{equation}
where on the right we take $C^k$-norms for a fixed $k\geq 1$. Below, we will denote by the symbol $\mathcal R$ terms that are quadratic in this norm. The space $\mathbb B$ is the completion of $\mathfrak X(\Sigma|_{\Sigma_{\min}})$ with respect to the $C^k$-norm.

We start by applying $\mathcal Q_{U_\varepsilon}^{-1}$ on both sides of 
\eqref{e:nf}. By Lemma \ref{l:puduxu}, we get
\begin{equation*}
\nu_\varepsilon \tilde X_\varepsilon+\nu_\varepsilon\tilde \nabla_{U_\varepsilon}\tilde X_\varepsilon-\tilde X_0-\tilde\nabla_{\tilde X_0}U_\varepsilon-\tilde T(U_\varepsilon,\tilde \nabla_{\tilde X_0} U_\varepsilon)+V_\varepsilon+\frac12\tilde T(U_\varepsilon,V_\varepsilon) =\mathcal R_\varepsilon.
\end{equation*}
We now analyze the terms on the left hand-side. The term $\tilde T(U_\varepsilon,V_\varepsilon)$ is quadratic in $\Vert(\mu_\varepsilon, U_\varepsilon,V_\varepsilon)\Vert$. Moreover, 
\begin{equation}
\tilde T(U_\varepsilon,\tilde \nabla_{\tilde X_0} U_\varepsilon)=\tilde T(U_\varepsilon,[\tilde X_0,U_\varepsilon]),
\end{equation}
where we have used that $\tilde T(\tilde X_0,U_\varepsilon)=0$ and that $\nabla_{U_\varepsilon}\tilde X_0$ is vertical and hence annihilates $\tilde T(U_\varepsilon,\cdot)$, see Lemma \ref{l:torsion}. Thus, $\tilde T(U_\varepsilon,V_\varepsilon)$ is also quadratic in $\Vert( \mu_\varepsilon,U_\varepsilon,V_\varepsilon)\Vert$. We expand
\begin{equation}
\nu_\varepsilon\tilde X_\varepsilon-\tilde X_0=\nu_\varepsilon\tilde X_0+\nu_\varepsilon Y_\varepsilon-\tilde X_0=\mu_\varepsilon\tilde X_0+\nu_\varepsilon Y_\varepsilon
\end{equation}
and
\begin{equation}
\begin{aligned}
\nu_\varepsilon\tilde\nabla_{U_\varepsilon}\tilde X_\varepsilon-\tilde \nabla_{\tilde X_0}U_\varepsilon&=\tilde\nabla_{U_\varepsilon}\tilde X_0+\tilde\nabla_{U_\varepsilon}Y_\varepsilon+\mu_\varepsilon \tilde\nabla_{U_\varepsilon}\tilde X_\varepsilon-\tilde\nabla_{\tilde X_0}U_\varepsilon\\
&=-[\tilde X_0,U_\varepsilon]+\tilde\nabla_{U_\varepsilon}Y_\varepsilon+\mu_\varepsilon \tilde\nabla_{U_\varepsilon}\tilde X_\varepsilon,
\end{aligned}
\end{equation}
where we have used that $\tilde T(U_\varepsilon,\tilde X_0)=0$ as $\tilde X_0$ is vertical. Since $\mu_\varepsilon \tilde\nabla_{U_\varepsilon}\tilde X_\varepsilon$ is quadratic in $\Vert( \mu_\varepsilon,U_\varepsilon,V_\varepsilon)\Vert$, we see that \eqref{e:nf} can be rewritten as
\begin{equation}
\mu_\varepsilon\tilde X_0-[\tilde X_0,U_\varepsilon]+V_\varepsilon=-\nu_\varepsilon Y_\varepsilon-\tilde\nabla_{U_\varepsilon}Y_\varepsilon+\mathcal R_\varepsilon.
\end{equation}
We define
\begin{equation}
\Lambda\colon \mathbb A\to\mathbb B,\qquad \Lambda(\mu,U,V):=\mu\tilde X_0-[\tilde X_0,U]+V.
\end{equation}
By Lemma \ref{l:vectors}, this map is an isomorphism with inverse $\Lambda^{-1}(Z)$ given by
\begin{equation}\label{e:inverse}
\mu=\tilde\tau(\overline Z),\qquad U=\mathcal L^{-1}(Z-\overline{Z}),\qquad V=\overline{Z}-\tilde\tau(\overline Z)\tilde X_0.
\end{equation}
Since the terms $-\nu_\varepsilon Y_\varepsilon-\tilde\nabla_{U_\varepsilon}Y_\varepsilon+\mathcal R_\varepsilon$ are continuous with respect to the parameter $Y_\varepsilon$ in the $C^{k+1}$-topology and the linear terms $-\nu_\varepsilon Y_\varepsilon-\tilde\nabla_{U_\varepsilon}Y_\varepsilon$ vanish for $Y_\varepsilon=0$, an application of the Inverse Function Theorem with parameter implies that there are unique solutions $\mu_\varepsilon$, $U_\varepsilon$, $V_\varepsilon$ with 
\begin{equation}\label{e:norm}
\Vert(\mu_\varepsilon,U_\varepsilon,V_\varepsilon)\Vert=O(\Vert Y_\varepsilon\Vert_{C^{k+1}})=O(\varepsilon),
\end{equation}
where the last equality follows from \eqref{e:tildeY}.

To finish the proof of Theorem \ref{bottkol_sec_4}, we have to establish the three estimates in \eqref{e:estimates}.(b). By \eqref{e:inverse}, we have
\begin{equation}\label{e:uepsilon}
U_\varepsilon=\mathcal L^{-1}(Z_\varepsilon-\overline{Z_\varepsilon}),\qquad Z_\varepsilon:=-\nu_\varepsilon Y_\varepsilon-\tilde\nabla_{U_\varepsilon}Y_\varepsilon+\mathcal R_\varepsilon.
\end{equation}
By \eqref{e:norm},
\begin{equation}
-\tilde\nabla U_\varepsilon Y_\varepsilon+ \mathcal R_\varepsilon=O(\varepsilon^2),
\end{equation}
since $\mathcal R_\varepsilon$ is quadratic in the norm of $\mathbb A$. Moreover, by \eqref{e:tildeY} we have
\begin{equation}
-\nu_\varepsilon Y^\pi_\varepsilon=O(\varepsilon^2).
\end{equation}
Since $\mathcal L^{-1}$ preserves the horizontal-vertical splitting by Lemma \ref{l:vectors}, we conclude by \eqref{e:uepsilon} that
\begin{equation}\label{e:oepsilon2}
U_\varepsilon^\pi=O(\varepsilon^2).
\end{equation}
This shows the first estimate.

For the second estimate, we recall by Lemma \ref{l:puduxu} that
\begin{equation*}
\begin{aligned}
\mathcal Q_{U_\varepsilon}^{-1}\cdot du_{\varepsilon}-\iota=\tilde\nabla U_\varepsilon+\tilde T(U_\varepsilon,\tilde \nabla U)+O(\Vert U_\varepsilon\Vert^2)&=\tilde\nabla U_\varepsilon+O(\varepsilon^2)\\
&=\begin{pmatrix}
O(\varepsilon^2)& O(\varepsilon^2)\\
O(\varepsilon)& O(\varepsilon)
\end{pmatrix}\in\mathcal D,
\end{aligned}
\end{equation*}
where we used \eqref{e:norm}, \eqref{e:oepsilon2} and the definition \eqref{e:mathcald} of $\mathcal D$. This shows the second estimate.

For the third estimate, we recall by Lemma \eqref{l:puduxu} that
\begin{equation*}
\mathcal Q_{U_\varepsilon}^{-1}\cdot \mathcal P_{U_\varepsilon}-\iota=\frac{1}{2}\tilde T(U_\varepsilon,\cdot)+O(\Vert U_\varepsilon\Vert^2)=\begin{pmatrix}
0&0\\
\tfrac12 \tilde T^\nabla_\pi&0
\end{pmatrix}\begin{pmatrix}
O(\varepsilon^2)\\ O(\varepsilon)
\end{pmatrix}+O(\varepsilon^2)\in\mathcal D,
\end{equation*}
where we used \eqref{e:norm}, \eqref{e:oepsilon2} and the definition \eqref{e:mathcald} of $\mathcal D$. This shows the third estimate, and finishes the proof of Theorem \ref{bottkol_sec_4}.\hfill\qed

\section{Theorem \ref{thmA}, Step 3: the Conformally Compatible Case}\label{sec:conformally-compatible}
In this section we conclude the proof of our first main theorem. 
\begin{proof}[Proof of Theorem \ref{thmA}]
Assume that $H$ is Zoll along a sequence of energies $\tfrac12\varepsilon^2_n$ converging to the minimum. By Corollary \ref{cor: bifurcation}, we know that
\begin{equation}
\tilde A=J=\frac{A}{a},
\end{equation}
where $J$ is an almost complex structure which is $\rho$-compatible and we have defined the function $a:=(\det A)^{-\frac{1}{2k}}\colon Q\to(0,\infty)$. Thus
\begin{equation}
\gamma_q(w,v)=\rho_q(w,A_qv)=a(q)\rho_q(w,J_q v),\qquad\forall\,q\in Q,\ \forall\,w,v\in E_q.
\end{equation}
Let us choose a connection $\nabla$ on $\pi\colon E\to Q$ such that $\nabla\rho=0$ and $\nabla J=0$, which exists since $J$ is $\rho$-compatible. 
By Lemma \ref{l:tildeX}
\begin{equation}\label{eq: expansion Y}
\tilde X_\varepsilon=a X_\varepsilon=\tilde X_0+\tilde Y_\varepsilon,\qquad \tilde Y_\varepsilon^\pi=\varepsilon^2a(q)\zeta_q(v,v)+O(\varepsilon^3).
\end{equation}
According to \eqref{eq:zeta-formula}, for every $(q,v)\in \Sigma$ and $\eta\in T_qQ$, we get
\begin{equation}
\begin{aligned}
\sigma_q(a(q)\zeta_q(v,v),\eta)=\tfrac12a(q) (\nabla_\eta \gamma)_q(v,v)&=\tfrac12 a(q)(d_q a\cdot \eta) \rho_q(v,J_qv)\\
&=\tfrac{1}{2}(d_q a\cdot \eta)\gamma_q(v,v)\\
&=\frac12d_q
a\cdot \eta,
\end{aligned}
\end{equation}
where we used that $\nabla\rho=0$, $\nabla J=0$, and $\gamma_q(v,v)=1$.
Thus, we conclude that 
\begin{equation}\label{eq: Hamvecfora}
a(q)\zeta_q(v,v)=X_{-a}(q),\qquad\forall\,(q,v)\in \Sigma,
\end{equation}
where $X_{-a}$ is the Hamiltonian vector field on $Q$ of the Hamiltonian function $-a$ with respect to the symplectic form $\sigma$. We now claim that $a$, is a constant function. Assume $a$ it is not constant and let $q_0\in Q$ be such that $d_{q_0}a\neq0$. Denote with $q\colon \mathbb R\to Q$ the orbit of $X_{-a}$ with $q(0)=q_0$. Since $X_{-a}(q_0)\neq 0$, there exists $t_0>0$ such that the $q|_{[0,t_0]}$ is an embedding. Let now $z_\varepsilon\colon\mathbb R\to\Sigma$ be any flow line of $\tilde X_\varepsilon$ such that $\pi(z_\varepsilon(0))=q_0$ and denote $q_\varepsilon:=\pi(z_\varepsilon)\colon \mathbb R\to Q$. By \eqref{eq: expansion Y} and \eqref{eq: Hamvecfora}, we have that
\begin{equation}
\dot q_\varepsilon(t)=\varepsilon^2 X_{-a}(q_\varepsilon(t))+O(\varepsilon^3).
\end{equation}
Therefore, from the continuous dependence of the solutions to ordinary differential equations from the vector field, we conclude that
\begin{equation}
d(q_\varepsilon(s),q(\varepsilon^2s))=O(\varepsilon),\qquad \forall\,s\in[0,t_0/\varepsilon^2].
\end{equation}
Since $q|_{[0,t_0]}$ is an embedding, we conclude that the period of $q_\varepsilon$, and hence, of $z_\varepsilon$ is at least $t_0/\varepsilon^2$. However, we know that when $\varepsilon=\varepsilon_n$, then, as $n$ tends to infinity, the period $T_n$ of $z_{\varepsilon_n}$ converges to $2\pi$ by Corollary \ref{cor: bifurcation}. Since $t_0/\varepsilon^2_n$ diverges to infinity, we reach a contradiction. We have thus shown that the function $a=\det A^{-\frac{1}{2k}}$ is constant. By the normalization \eqref{normalization_of_A_morse-bott}, we deduce that $a=1$. Thus $A=J$ is an almost complex structure (compatible with $\rho$) and the proof is complete.
\end{proof}

\section{Theorem \ref{thmB}, Step 1: The Magnetic Normal Form}\label{sec:proof-thmB}

Throughout this section let $(Q,g,\beta)$ be a magnetic system with $\beta$ symplectic, and let $\omega_{\mathrm{can},\beta}=d\lambda-\pi^*\beta$ be the twisted symplectic form on $TQ$ (via the metric identification $T^*Q\simeq TQ$). We write $H^g(q,v)=\tfrac12\,g_q(v,v)$. As recalled in the Introduction, the linearized dynamics in the symplectic normal bundle is generated by the Lorentz endomorphism $B$ determined by $g(Bu,w)=\beta(u,w)$. Assume that $H^g$ is Zoll along a sequence $\varepsilon_n\to 0$, i.e.\ each $\Sigma_{\varepsilon_n}$ is Zoll up to a global smooth time reparametrization.  Applying Theorem \ref{thmA} to this symplectic Morse--Bott minimum yields that the fiberwise Hessian is $\rho$-compatible. Hence $B=J$ is a $\beta$-compatible almost complex structure, that is, $(g,\beta)$ is an almost Kähler structure. To finish the proof, we need a normal form for the magnetic flow associated to an almost Kähler structure. This is the content of the next subsection.

\subsection{A normal form for almost K\"ahler magnetic flows}

Assume from now on that $(g,\beta)$ is an almost K\"ahler structure. Equivalently, the Lorentz endomorphism is given by an almost complex structure $J$, and $\beta = g(J\cdot,\cdot)$. For every $\varepsilon>0$, let $\Sigma_\varepsilon=(H^g)^{-1}\!\left(\tfrac12\varepsilon^2\right)$ be the corresponding energy level. Via the rescaling map $SQ\longrightarrow \Sigma_\varepsilon,\ (q,v)\longmapsto (q,\varepsilon v)$, the restriction of the twisted symplectic form identifies with
\[
\omega_\varepsilon:=\varepsilon\,d\lambda-\pi^*\beta
\]
on $SQ$. We now fix the Chern connection $\nabla=\nabla^{g,J}$, namely the unique connection characterized by
\[
\nabla g=0,\qquad \nabla J=0,\qquad T(J\cdot,\cdot)=T(\cdot,J\cdot),
\]
where $T$ denotes the torsion tensor of $\nabla$. Since we are in the almost K\"ahler setting, one has $T=-\tfrac14 N^J$, with $N^J$ the Nijenhuis tensor. As a consequence, $T$ is complex antilinear in both entries. The main result of this subsection is the following normal form theorem.
\begin{theorem}\label{thm: magnetic normal form}
There exists an isotopy $\psi_\varepsilon: SQ\to SQ$ with $\psi_0=\mathrm{id}$ and 
\begin{equation}\label{isotopy}
\psi^*_\varepsilon\omega_\varepsilon=-\pi^*\beta+d \left( H_\varepsilon \tau\right)+o(\varepsilon^4),
\end{equation}
where $\tau=\tau^{\beta,\nabla^{g,J}}$ is the angular form defined in \ref{def_of_coupling_1_form}, $\mu\colon SQ\to P_\C(TQ)$ is the circle-bundle projection, and the function $H_\varepsilon$ is given by
\begin{equation}\label{eq:H-eps-expansion}
H_\varepsilon=\frac{\varepsilon^2}{2}+\frac{\varepsilon^4}{4}\,\frac{\hat K\circ\mu}{2},
\qquad
\hat K:=K-\tfrac{2}{3}|T^*|^2=K-\tfrac{1}{24}|N^*|^2,
\end{equation}
where $K=K^{g,J}$ is the holomorphic sectional curvature of the Chern connection, and $|N^*|^2:P_{\mathbb C}(TQ)\to \lbrack 0,\infty)$ is the fiberwise function defined by
\begin{equation}\label{eq:def-Nstar}
|N^*|^2(\mu_q(v))
:=\big|N_v^*v\big|_g^2
\qquad (v\in S_qQ),
\end{equation}
with $N_v:T_qQ\to T_qQ$ given by $N_v(w):=N^J(v,w)$ and $N_v^*$ its $g$-adjoint. 
\end{theorem}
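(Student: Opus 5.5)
I will build $\psi_\varepsilon$ by a Moser-type argument, order by order in $\varepsilon$. First I write $\omega_\varepsilon=\varepsilon\,d\lambda-\pi^*\beta$ in the horizontal--vertical splitting of the Chern connection $\nabla$ on $T(SQ)$. On $SQ$ one has $\lambda_{(q,v)}(\xi)=g(v,\xi^\pi)$. The derivative $d\lambda$ then pairs horizontal with vertical through $g$. Its horizontal--horizontal block is $g(v,T(\cdot,\cdot))$, which comes from the torsion of $\nabla$, since $d\lambda(\eta^h,\zeta^h)=-g(v,[\eta,\zeta])$ and $\nabla_\eta\zeta-\nabla_\zeta\eta-[\eta,\zeta]=T(\eta,\zeta)$. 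By contrast, the target form $-\pi^*\beta+d(H_\varepsilon\tau)$ has, by \eqref{symplectic_form_matrix_morse-bott-case_b} and \eqref{eq:delta}, horizontal block $-\beta+\tfrac{\varepsilon^2}{2}\beta(R(\cdot,\cdot)v,v)+\dots$ and vertical block $\varepsilon^2\beta+\dots$. The two forms are therefore not comparable as written: $\varepsilon\,d\lambda$ has off-diagonal entries of order $\varepsilon$ and vanishing vertical block. The first step is to absorb this by the map $\psi_\varepsilon$ that, to first order, moves the base point by $\varepsilon Jv$. This is the symplectic-normal identification $\jmath$ of \eqref{j-map}, whose fibres are the graphs $u\mapsto(u,Bu)$ with $B=J$. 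Concretely, I take $\psi_\varepsilon(q,v)=(\exp^\nabla_q(\varepsilon J v+\varepsilon^2 a_2(v)+\varepsilon^3a_3(v)),\ \text{parallel transport of } v+\varepsilon b_1(v)+\dots)$. Here $a_j,b_j$ are fiberwise polynomial tensors to be determined, with values in $TQ$ and with $b_j$ chosen so that the point stays in $SQ$.

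The second step is to expand $\psi_\varepsilon^*\omega_\varepsilon$ to order $\varepsilon^4$. I would do this by computing the pullback along the curve $s\mapsto \exp(s\varepsilon Jv)$ and using the Jacobi field equation for $\nabla$ with torsion, as in Lemma \ref{l:puduxu}. The curvature $R$ of $\nabla$ enters through the second-order terms, and the torsion $T=-\tfrac14N^J$ enters through the first- and second-order terms. Order by order, the pulled-back form equals the target up to an exact correction $d\alpha_j$, where each $\alpha_j$ is homogeneous in $v$. Two facts let me absorb every term except a $\mu$-basic multiple of $\tau$:
\begin{itemize}
\item the cohomology class is fixed, so all corrections are exact;
\item terms that are odd in $v$ average out under the $S^1$-action $v\mapsto e^{tJ}v$.
\end{itemize}
The absorption itself is a homological equation $\mathcal L_{X_0}(\cdot)=\text{(zero-average part)}$, solved by fiberwise averaging as in Lemma \ref{l:vectors}. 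The $S^1$-average that cannot be absorbed is a function on $P_\C(TQ)$ multiplying $d\tau$. At order $\varepsilon^2$ this average is the constant $\tfrac12$, which fixes the leading coefficient of $H_\varepsilon$ against the vertical block $\varepsilon^2\beta$.

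The third step, which I expect to be the main obstacle, is the order-$\varepsilon^4$ computation. Here the torsion contributes quadratically through terms like $|T(v,\cdot)^*v|^2$, because the first-order correction $a_2$ must itself be chosen in terms of $T(v,Jv)$-type expressions to kill the order-$\varepsilon^3$ off-diagonal terms. I would first do the computation at a point in a unitary frame. I would then use the complex antilinearity of $T$, namely $T_{Jv}=-JT_v=-T_vJ$, to see that the averaged torsion terms reduce to $-\tfrac23|T_v^*v|^2$. This gives $\tfrac{\varepsilon^4}{8}(K-\tfrac23|T^*|^2)$, the coefficient claimed in \eqref{eq:H-eps-expansion}. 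The identity $|T^*|^2=\tfrac1{16}|N^*|^2$ then gives the second form of $\hat K$. As a consistency check on the coefficient, I would test the result against the Kähler space-form case of Section \ref{example_magnetic}, where $H\circ\Psi=H+\tfrac\kappa2H^2$ must be reproduced.
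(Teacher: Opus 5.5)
Your architecture matches the paper's: a Moser isotopy built order by order in $\varepsilon$, starting from the guiding-centre displacement. However, the proposal stops exactly where the content of the theorem lies. The coefficient $\tfrac18(K-\tfrac23|T^*|^2)$ is announced rather than derived, and two of your indications about its origin are wrong.

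\emph{Sign of the leading displacement.} With $\beta=g(J\cdot,\cdot)$, the horizontal part of the zeroth-order Moser equation is $-\beta(Z_0^\pi,\cdot)+g(v,\cdot)=0$, which forces $Z_0^\pi=-Jv$. This agrees with $\jmath$: a normal vector with vertical part $v=Ju$ has horizontal part $u=-Jv$. With your $+\varepsilon Jv$, the $O(\varepsilon)$ mixed blocks of the pullback add up to $-2\varepsilon g$ instead of cancelling, and the vertical block becomes $-3\varepsilon^2\beta$ instead of $\varepsilon^2\beta$.

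\emph{Which torsion terms appear.} By antilinearity, $T(v,Jv)=-JT(v,v)=0$, so corrections of ``$T(v,Jv)$-type'' vanish identically. Torsion enters already at first order: the block $\varepsilon\,g(v,T(\cdot,\cdot))$ of $\varepsilon\,d\lambda$, contracted with $Z_0^\pi$, produces the vector $T_v^*v$.

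The missing idea is how to invert the form on the vertical directions orthogonal to $\{v,Jv\}$. Your homological equation and averaging only treat the component of the defect along the characteristic $(Jv)^v$. On the complementary vertical directions the target has only $\varepsilon^2\beta$, so a vertical defect can only be removed by the vertical part of the generator two orders earlier.

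The paper handles this as follows. It solves $\iota_{Z_\varepsilon}\omega_\varepsilon+\lambda=(\psi_\varepsilon^{-1})^*(h_\varepsilon\tau)+o(\varepsilon^3)$ with $Z_\varepsilon^\nabla\perp\{v,Jv\}$, leaving the vertical part of $Z$ free at each order until it is needed.
\begin{enumerate}
\item At order $\varepsilon^2$ the vertical part reads $\beta(JT_v^*v+3Z_0^\nabla,\cdot)=h_0''\beta(v,\cdot)$. This forces $h_0''=0$ and a first-order fibre correction $Z_0^\nabla=-\tfrac13 JT_v^*v$. In your ansatz this is $b_1$, which you only constrain to keep the point in $SQ$.
\item This correction is the source of the $\tfrac23$: it turns the horizontal correction into $(Z_0')^\pi=T_v^*v-JZ_0^\nabla=\tfrac23T_v^*v$.
\item At order $\varepsilon^3$, the $Jv$-component of the vertical equation gives $h_0'''=(K-\tfrac23|T^*|^2)-\tfrac12(d\iota_{Z_0}d\tau)(Z_0,(Jv)^v)=\tfrac12\hat K$, using Lemma \ref{lem: Z0 identity}. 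Integrating yields $H_\varepsilon=\tfrac{\varepsilon^2}{2}+\tfrac{\varepsilon^4}{8}\hat K$.
\end{enumerate}
Working with the generator at the level of primitives also makes your fourth-order Jacobi-field expansion of an explicit map, and any averaging, unnecessary.

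Finally, your Kähler space-form consistency check cannot confirm the $-\tfrac23$, since $T=0$ there.
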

Before carrying out the proof in the next subsection, we fix some notation and prove a couple of useful lemmas. We denote by $\sH\oplus\sV$ the horizontal--vertical splitting of $TTQ$ induced by $\nabla$, and we use the notation $(\cdot)^{h}$, $(\cdot)^{v}$ for horizontal/vertical lifts, and $(\cdot)^{\pi}$, $(\cdot)^{\nabla}$ for horizontal/vertical projections. The canonical symplectic form and the differential of the coupling form are given by
\begin{equation}
d\lambda_{(q,v)}=\begin{pmatrix}
g_q(v,T_q(\cdot,\cdot))&-g_q\\ g_q&0.
\end{pmatrix},\qquad d\tau_{(q,v)}=\begin{pmatrix}
\beta_q(R^\nabla_q(\cdot,\cdot)v,v)&0\\ 0& 2\beta_q
\end{pmatrix}
\end{equation}
in the horizontal-vertical splitting. We now establish two formulas that will be used in the proof of the normal form theorem.
\begin{lemma}\label{lem: expansion differential form}
Assume that $\psi_\varepsilon\colon \Sigma\to\Sigma$ is a path of diffeomorphisms starting at the identity generated by a non-autonomous vector field $Z_\varepsilon$, that is,
\begin{equation}
\dot\psi_\varepsilon=Z_\varepsilon\circ\psi_\varepsilon,\qquad\psi_0=\mathrm{id}.
\end{equation}
For every differential form $\alpha$ on $\Sigma$ we have
\begin{equation}
(\psi_\varepsilon^{-1})^*\alpha=\alpha+\varepsilon \alpha'+\frac{\varepsilon^2}{2}\alpha''+o(\varepsilon^2),
\end{equation}
where 
\begin{equation}
\alpha'=-\mathcal L_{Z_0}\alpha,\qquad \alpha''=\mathcal L_{Z_0}\mathcal L_{Z_0}\alpha-\mathcal L_{\dot Z_0}\alpha  
\end{equation} 
\end{lemma}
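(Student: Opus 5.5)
The plan is to reduce the statement to the elementary identity $\frac{d}{d\varepsilon}(\phi_\varepsilon)^*\alpha=\phi_\varepsilon^*\mathcal L_{W_\varepsilon}\alpha$, valid for a path of diffeomorphisms $\phi_\varepsilon$ generated by a time-dependent vector field $W_\varepsilon$ via $\dot\phi_\varepsilon=W_\varepsilon\circ\phi_\varepsilon$. Then we Taylor expand to second order in $\varepsilon$ at $\varepsilon=0$.

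First, we identify the generator of the inverse path $\phi_\varepsilon:=\psi_\varepsilon^{-1}$. Differentiating $\psi_\varepsilon\circ\psi_\varepsilon^{-1}=\mathrm{id}$ gives
\[
\dot\phi_\varepsilon=-d\psi_\varepsilon^{-1}\cdot Z_\varepsilon\circ\psi_\varepsilon\circ\phi_\varepsilon=W_\varepsilon\circ\phi_\varepsilon,\qquad W_\varepsilon:=-(\psi_\varepsilon)^*Z_\varepsilon .
\]
Since $\psi_0=\mathrm{id}$, we get $W_0=-Z_0$. Differentiating once more,
\[
\dot W_0=-\dot Z_0-\frac{d}{d\varepsilon}\Big|_{\varepsilon=0}(\psi_\varepsilon)^*Z_0=-\dot Z_0-[Z_0,Z_0]=-\dot Z_0,
\]
where we used $\frac{d}{d\varepsilon}\psi_\varepsilon^*Y|_{0}=\mathcal L_{Z_0}Y$. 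Note that the bracket term vanishes here precisely because the fixed vector field being pulled back is $Z_0$ itself.

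Next, we expand $\phi_\varepsilon^*\alpha$. Its first derivative is $\phi_\varepsilon^*\mathcal L_{W_\varepsilon}\alpha$, which equals $-\mathcal L_{Z_0}\alpha$ at $\varepsilon=0$; this gives $\alpha'$. Its second derivative is
\[
\phi_\varepsilon^*\mathcal L_{W_\varepsilon}\mathcal L_{W_\varepsilon}\alpha+\phi_\varepsilon^*\mathcal L_{\dot W_\varepsilon}\alpha ,
\]
which at $0$ equals $\mathcal L_{Z_0}\mathcal L_{Z_0}\alpha-\mathcal L_{\dot Z_0}\alpha$. This is exactly $\alpha''$. Taylor's theorem with the $C^k$-smoothness of the data in $\varepsilon$ (on the compact $\Sigma$) then gives the remainder $o(\varepsilon^2)$ in the $C^k$ sense used in the Notation subsection.

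The only delicate point is the sign and ordering bookkeeping for the inverse flow, specifically computing $\dot W_0$ correctly. If the pullback-derivative route is confusing, a fallback is to write $\psi_\varepsilon=\mathrm{id}+\varepsilon Z_0+\tfrac{\varepsilon^2}{2}(\dot Z_0+\nabla_{Z_0}Z_0)+\dots$ in local coordinates. One then checks the formula directly, for instance on functions and exact one-forms, since these generate all forms locally and both sides are natural derivations commuting with $d$.
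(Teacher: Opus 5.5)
Your proposal is correct and matches the paper's proof essentially step for step. You identify the generator $-\psi_\varepsilon^*Z_\varepsilon$ of $\psi_\varepsilon^{-1}$, check that its value and derivative at $\varepsilon=0$ are $-Z_0$ and $-\dot Z_0$ (since $[Z_0,Z_0]=0$), and differentiate $(\psi_\varepsilon^{-1})^*\alpha$ twice using $\frac{d}{d\varepsilon}\phi_\varepsilon^*\alpha=\phi_\varepsilon^*\mathcal L_{W_\varepsilon}\alpha$. Your Taylor-remainder remark is a small addition the paper leaves implicit. The local-coordinates fallback is unnecessary, and as stated it is imprecise, since $\alpha\mapsto\alpha''$ is not a derivation.
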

\begin{proof}
The path $\psi^{-1}_\varepsilon$ is generated by the vector field $Y_\varepsilon$ given by the formula
\begin{equation}
Y_\varepsilon=-d\psi_\varepsilon^{-1}(Z_\varepsilon\circ\psi_\varepsilon).
\end{equation}
In particular, $Y_0=-Z_0$ and $\dot Y_0=-\dot Z_0-[Z_0,Z_0]=-\dot Z_0$. Thus, we compute
\begin{equation}
\alpha'=\frac{d}{d\varepsilon}\Big|_{\varepsilon=0}(\psi^{-1}_\varepsilon)^*\alpha=(\psi_0^{-1})^*\mathcal L_{Y_0}\alpha=-\mathcal L_{Z_0}\alpha
\end{equation}
and
\begin{equation}
\begin{aligned}
\alpha''=\frac{d^2}{d\varepsilon^2}\Big|_{\varepsilon=0}(\psi^{-1}_\varepsilon)^*\alpha&=\frac{d}{d\varepsilon}\Big|_{\varepsilon=0}(\psi_\varepsilon^{-1})^*\mathcal L_{Y_\varepsilon}\alpha\\
&=(\psi_0^{-1})^*\mathcal L_{Y_0}\mathcal L_{Y_0}\alpha+(\psi_0^{-1})^*\mathcal L_{\dot Y_0}\alpha\\
&=\mathcal L_{Z_0}\mathcal L_{Z_0}\alpha-\mathcal L_{\dot Z_0}\alpha.
\end{aligned}
\end{equation}
\end{proof}
\begin{lemma}\label{lem: Z0 identity}
If we define
\[
Z_0=(-Jv)^h+(-\tfrac13JT^*_vv)^v,
\]
then
\begin{equation}
(d\iota_{Z_0}d\tau)(Z_0,(Jv)^v)=\hat K(\mu(v)).
\end{equation}
\end{lemma}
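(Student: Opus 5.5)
The plan is to reduce the identity to an explicit computation of the one-form $\iota_{Z_0}d\tau$ on $SQ$, then of its exterior derivative, and finally to evaluate on the pair $(Z_0,(Jv)^v)$.

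\textbf{Step 1: the one-form.} Using the block expression of $d\tau$ in the Chern horizontal--vertical splitting, we get at $(q,v)$, for any $\xi\in T_{(q,v)}SQ$,
\begin{equation*}
(\iota_{Z_0}d\tau)(\xi)=\beta_q\big(R^\nabla_q(-Jv,\xi^\pi)v,v\big)+2\beta_q\big(-\tfrac13JT^*_vv,\xi^\nabla\big).
\end{equation*}
Since $\beta=g(J\cdot,\cdot)$, the second term equals $\tfrac23 g(T^*_vv,\xi^\nabla)$. Call the resulting one-form $\alpha$, and write it as $\alpha=\alpha^h+\alpha^v$ with
\begin{equation*}
\alpha^h(\xi)=\beta(R(-Jv,\xi^\pi)v,v),\qquad \alpha^v(\xi)=\tfrac23 g(T^*_vv,\xi^\nabla).
\end{equation*}

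\textbf{Step 2: the exterior derivative.} We use $d\alpha(X,Y)=X\alpha(Y)-Y\alpha(X)-\alpha([X,Y])$ with $X=Z_0$ and $Y=(Jv)^v$. Note that $(Jv)^v$ is the fiber-rotation vector field, which is tangent to $SQ$ because $g(v,Jv)=0$.

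We first need three Lie brackets.
\begin{itemize}
\item $[(Jv)^v,(Jv)^h]=(J\,Jv)^h=(-v)^h$, since the flow of $(Jv)^v$ is fiberwise rotation by $J$ and $\nabla J=0$.
\item For the vertical part, $[(Jv)^v,(w(v))^v]=(Dw_v(Jv)-J w(v))^v$, where $w(v)=-\tfrac13JT^*_vv$ is quadratic in $v$.
\item The bracket $[(Jv)^h,(\cdot)^v]$ requires the lemma computed in the proof of Lemma~\ref{l:torsion}. It produces vertical covariant derivatives, which vanish to the order relevant after pairing, because $\nabla$ preserves all the structures involved.
\end{itemize}

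Next we evaluate each piece.
\begin{itemize}
\item $\alpha(Z_0)$: its horizontal part contains $R(-Jv,-Jv)=0$. Only the vertical part survives, giving
\begin{equation*}
\alpha(Z_0)=\tfrac23 g\big(T^*_vv,-\tfrac13JT^*_vv\big)=0
\end{equation*}
by antisymmetry of $g(\cdot,J\cdot)$. So the term $Y\alpha(X)$ reduces to contributions from how $Z_0$ itself rotates along $Y$. It is cleaner to compute $(Jv)^v\big(\alpha(Z_0)\big)$ directly as the derivative of a function that is identically zero, hence zero.
\item $\alpha((Jv)^v)=\tfrac23 g(T^*_vv,Jv)=\tfrac23 g(v,T_v(Jv))$. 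Because $T$ is antilinear and antisymmetric, $T(v,Jv)=-JT(v,v)=0$, so this term vanishes identically. Hence $Z_0\alpha(Y)=0$.
\item Therefore $d\alpha(Z_0,(Jv)^v)=-\alpha([Z_0,(Jv)^v])$.
\end{itemize}

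\textbf{Step 3: the bracket term.} By the brackets above,
\begin{equation*}
[Z_0,(Jv)^v]=(-v)^h+\big(\text{vertical } W\big),\qquad W=\tfrac13\big(JD(JT^*_vv)(Jv)-J\,JT^*_vv\big).
\end{equation*}
Antilinearity of $T$ makes $v\mapsto T^*_vv$ satisfy $T^*_{Jv}Jv=-JT^*_vv$ up to sign conventions, so $v\mapsto JT^*_vv$ rotates with weight $-3$ under $v\mapsto e^{tJ}v$. This gives $W=\pm\tfrac43\,(\ldots)T^*_vv$-type terms, to be fixed carefully.

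Then:
\begin{itemize}
\item $-\alpha^h\big((-v)^h\big)=\beta(R(-Jv,-v)v,v)$. After using the symmetries of the Chern curvature and $\beta=g(J\cdot,\cdot)$, this equals $g(R(v,Jv)Jv,v)=K(\mu(v))$.
\item $-\alpha^v(W)$ yields a multiple of $|T^*_vv|^2$. The coefficient must combine to $-\tfrac23$, matching $\tfrac23\cdot\tfrac13\cdot(\text{weight factor }3)$.
\end{itemize}
Using $T=-\tfrac14N^J$ converts $\tfrac23|T^*|^2$ into $\tfrac1{24}|N^*|^2$, as in \eqref{eq:H-eps-expansion}.

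\textbf{Main obstacle.} The delicate step is the bracket $[(-Jv)^h,(w(v))^v]$ together with the exact weight computation of $W$. Mistakes in sign or factor conventions (the factor $2$ in $d\tau$, the sign of $\tau$) directly change the coefficient $\tfrac23$. I would first verify the curvature term against the Kähler case, where $T=0$ and the result must be $K$. I would then fix the torsion coefficient by computing in the weight decomposition of $v\mapsto T^*_vv$ under the $S^1$-action.
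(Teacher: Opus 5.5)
\textbf{Gap: the torsion coefficient is asserted, not derived.} The problem is Step 3, at the torsion term, which is exactly the part of the lemma that goes beyond the Kähler case. You never determine $W$. You assert that the coefficient ``must'' be $-\tfrac23$, and the intermediate claims you do write are wrong and lead to other values.

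The correct computation is as follows. Antilinearity of $T$ in each slot gives $T_{e^{tJ}v}=e^{-tJ}T_v$ and $T_ve^{tJ}=e^{-tJ}T_v$. Hence $T^*_ve^{tJ}=e^{-tJ}T^*_v$ and
\[
T^*_{e^{tJ}v}\,e^{tJ}v=e^{-2tJ}\,T^*_vv .
\]
So $v\mapsto JT^*_vv$ has weight $-2$, not $-3$, and $T^*_{Jv}Jv=-T^*_vv$, not $-JT^*_vv$. Write $w(v):=-\tfrac13JT^*_vv$. Then $Dw_v(Jv)=-2Jw(v)$. The vertical part of $[Z_0,(Jv)^v]=[(-Jv)^h,(Jv)^v]+[w^v,(Jv)^v]$ is therefore
\[
Jw(v)-Dw_v(Jv)=3Jw(v)=T^*_vv
\]
exactly, and $-\alpha\big((T^*_vv)^v\big)=-\tfrac23|T^*_vv|^2$.

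Your stated claims give different answers. With your weight $-3$ you would get $W=\tfrac43T^*_vv$ and coefficient $-\tfrac89$. Your displayed formula for $W$ has a spurious $J$ in front of the derivative term and would give $-\tfrac29$. The factor $3$ in your last bullet is the right number, but it equals $1-(-2)$: one unit comes from $D(J\,\cdot)=J$ and two from the weight of $w$. In the paper's version this is the exponent in $e^{-3tJ}$: a factor $e^{-2tJ}$ comes from $JT^*_vv$, and one more $e^{-tJ}$ comes from moving the rotation off the test vector $\xi^\nabla$ inside $\beta$.

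\textbf{What is sound.} The rest of your reduction is correct and is the paper's argument written with brackets instead of Lie derivatives. Indeed $\alpha(Z_0)=d\tau(Z_0,Z_0)=0$ and $\alpha((Jv)^v)=\tfrac23g(v,T(v,Jv))=0$. So the quantity equals $-\alpha([Z_0,(Jv)^v])=-\iota_{Z_0}\mathcal L_{(Jv)^v}\alpha$. Also $[(-Jv)^h,(Jv)^v]=(-v)^h$ has no vertical component, because the flow $v\mapsto e^{tJ}v$ preserves $\mathcal H$ when $\nabla J=0$.

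\textbf{Sign slip in the curvature bullet.} The correct expression is $-\alpha^h((-v)^h)=\beta(R(-Jv,v)v,v)$. What you wrote, $\beta(R(-Jv,-v)v,v)$, equals $-K$. The correct expression equals $g(R(Jv,v)v,Jv)=K(\mu(v))$. This uses only antisymmetry of $R$ in its first two slots and $g$-skewness of $R(X,Y)$. Those are the only symmetries available for the Chern curvature, since there is no pair symmetry when the torsion is nonzero.
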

\begin{proof}
We have
\begin{equation}
\iota_{(Jv)^v}\iota_{Z_0}d\iota_{Z_0}d\tau=-\iota_{Z_0}\mathcal L_{(Jv)^v}\iota_{Z_0}d\tau, 
\end{equation}
where we used that $\iota_{(Jv)^v}\iota_{Z_0}d\tau=0$. By the definition of $Z_0$ we get
\[
\iota_{Z_0}d\tau=\beta(R(\ \cdot^\pi,Jv)v,v)-2\beta(\tfrac13 JT^*_vv,\cdot^\nabla).
\]
We differentiate the two pieces with respect to $(Jv)^v$ separately. For the first piece we use that the the flow of $(Jv)^v$ act as the identity of the horizontal distribution. Thus,
\begin{equation}
\begin{aligned}
\mathcal L_{(Jv)^v}\beta(R(\ \cdot^\pi,Jv)v,v)&=\frac{d}{dt}\Big|_{t=0}\beta(R(\ \cdot^\pi,Je^{tJ}v)e^{tJ}v,e^{tJ}v)\\
&=-\beta(R(\ \cdot^\pi,v)v,v)\\
&\quad\, +\beta(R(\ \cdot^\pi,Jv)Jv,v)+\beta(R(\ \cdot^\pi,Jv)v,Jv).
\end{aligned}
\end{equation}
Applying $-\iota_{Z_0}$ and using that $(Z_0)^\pi=-Jv$ we get
\begin{equation}
-\iota_{Z_0}\mathcal L_{(Jv)^v}\beta(R(\ \cdot^\pi,Jv)v,v)=-\beta(R(Jv,v)v,v)=g(R(Jv,v)v,Jv)=K(\mu(v)).
\end{equation}
We now differentiate the second piece with respect to $(Jv)^v$. Using that the flow of $(Jv)^v$ act as multiplication by $e^{tJ}$ on the vertical distribution and that the torsion tensor is anticomplex, we find
\begin{equation}
\begin{aligned}
\mathcal L_{(Jv)^v}\beta(-\tfrac23 JT^*_vv,\cdot^\nabla)&=-\tfrac23\frac{d}{dt}\Big|_{t=0}\beta(JT^*_{e^{tJ}v}e^{tJ}v,e^{tJ}\cdot^\nabla)\\
&=-\tfrac23\frac{d}{dt}\Big|_{t=0}\beta(e^{-tJ}Je^{-tJ}e^{-tJ}T^*_{v}v,\cdot^\nabla)\\
&=-\tfrac23\frac{d}{dt}\Big|_{t=0}\beta(e^{-3tJ}JT^*_{v}v,\cdot^\nabla)\\
&=-2\beta(T^*_{v}v,\cdot^\nabla).
\end{aligned}
\end{equation}
Applying $-\iota_{Z_0}$ and using that $Z_0^\nabla=-\tfrac13JT^*_vv$, we get
\begin{equation}
-\iota_{Z_0}\mathcal L_{(Jv)^v}\beta(-\tfrac23 JT^*_vv,\cdot^\nabla)=2\beta(T^*_vv,-\tfrac13JT^*_vv)=-\tfrac23|T^*_vv|^2=-\tfrac{2}{3}|T^*|^2(\mu(v)).
\end{equation}
\end{proof}

\subsection{The proof of the Normal Form Theorem \ref{thm: magnetic normal form}}\label{subsec:moser-expansion}
Let $Z_\varepsilon$ be the non-autonomous vector field generating $\psi_\varepsilon$,
\[
\dot\psi_\varepsilon = Z_\varepsilon\circ\psi_\varepsilon,\qquad \psi_0=\mathrm{id}.
\]
Differentiating \eqref{isotopy} with respect to $\varepsilon$ we obtain:
\begin{equation}
    \psi^*_{\varepsilon}\big(d\lambda + \mathcal{L}_{Z_{\varepsilon}}\omega_{\varepsilon} \big) = d(h_{\varepsilon}\tau) +o(\varepsilon^3), \qquad h_{\varepsilon}:= \dot H_{\varepsilon}\circ\mu.
\end{equation}
By Cartan's formula, the above equation then reduces to:
\[
\psi_\varepsilon^* \left[ d{\iota_{Z_\varepsilon}} \omega_\varepsilon + d \lambda \right] = d(h_\varepsilon \tau)+o(\varepsilon^3)
\tag{$\star$}
\]
Since the exterior differential commutes with pull-back, the above equation is satisfied if we can solve
\begin{equation}\label{eq:basic-heart}
\iota_{Z_\varepsilon}\omega_\varepsilon + \lambda
=
(\psi^{-1}_\varepsilon)^*(h_\varepsilon\,\tau)
+o(\varepsilon^3).\tag{$\heartsuit$}
\end{equation}
We make the Ansatz
\begin{equation}
\begin{aligned}
Z_\varepsilon&=Z_0+\varepsilon Z_0'+\varepsilon^2 Z_0''+\varepsilon^3 Z_0''',\qquad Z_\varepsilon^\nabla\in\{v,Jv\}^\perp,\\
h_\varepsilon&=h_0+\varepsilon h_0'+\varepsilon^2 h_0''+\varepsilon^3 h_0'''.
\end{aligned}
\end{equation}
We construct the solutions in increasing order of $\varepsilon$. 
\medskip

\textit{Zeroth order.} Let $Z_\varepsilon := Z_0 + \varepsilon Z'_\varepsilon$, $h_\varepsilon = h_0 + \varepsilon h'_\varepsilon$. Then evaluating $(\heartsuit)$ at $\varepsilon=0$ implies
\[
-\iota_{Z_0} \pi^* \beta + \lambda = h_0 \tau.
\tag{$\infty$}
\]
The vertical part gives
\[
0=h_0\beta(v,\cdot)\qquad\Longleftrightarrow\qquad h_0=0.
\]
The horizontal part gives
\[
-\beta(Z_0^\pi,\cdot)+g(v,\cdot)=0 \qquad
\Longleftrightarrow \qquad-g(J Z_0^\pi,\cdot)+g(v,\cdot)=0,
\]
which has the solution $ Z_0^\pi=-Jv$. Thus, for every $Z_0^\nabla\in \{v,Jv\}^\perp$, we get the solution
\[
h_0=0, \qquad Z_0=(-Jv)^h+(Z_0^\nabla)^v.
\]
Substituting back into $(\heartsuit)$ and dividing by $\varepsilon$
\[
\iota_{Z_0} d\lambda + \iota_{Z'_\varepsilon} \omega_\varepsilon = (\psi_\varepsilon^{-1})^* (h'_\varepsilon \tau) \tag{$\heartsuit$'}+o(\varepsilon^2).\vspace{10pt}
\]

\textit{First order:} Let $Z'_\varepsilon := Z'_0 + \varepsilon Z''_\varepsilon$, 
$h'_\varepsilon = h'_0 + \varepsilon h''_\varepsilon$. Evaluating ($\heartsuit$') at $\varepsilon=0$ implies
\[
\iota_{Z_0} d\lambda - \iota_{Z'_0} \pi^* \beta= h'_0 \tau \tag{$\infty'$}
\]
The vertical part gives
\[
-g( Z_0^\pi,\cdot)=h'_0\beta(v,\cdot)\qquad\Longleftrightarrow\qquad g(Jv,\cdot)=h'_0g(Jv,\cdot),
\]
which has the solution $h_0'=1$. The horizontal part gives
\[
-g(v,T_{Jv}\cdot)+g(Z_0^\nabla,\cdot)-\beta( (Z_0')^\pi,\cdot)=0,
\]
which has the solution $ (Z_0')^\pi=T^*_vv-JZ_0^\nabla$.
Thus, for every $(Z_0')^\nabla\in\{v,Jv\}^\perp$ we get the solution
\[
h'_0=1,\qquad Z_0'=(T^*_vv-JZ_0^\nabla)^h+((Z'_0)^{\nabla})^v.
\]
Substituting back into \((\heartsuit)'\) and dividing by $\varepsilon$:
\[
\iota_{Z'_0} d\lambda + \iota_{Z''_\varepsilon} \omega_\varepsilon 
= \tau_\varepsilon' + (\psi_\varepsilon^{-1})^*(h''_\varepsilon \tau)+o(\varepsilon),
\tag{$\heartsuit$''}
\]
where $\tau_\varepsilon'$ is uniquely defined by the equation $(\psi_\varepsilon^{-1})^*\tau=\tau+\varepsilon\tau_\varepsilon'$ and by Lemma \ref{lem: expansion differential form} we have 
\begin{equation}\label{eq1}
    \tau_0'=-\mathcal L_{Z_0}\tau=- d(g(Jv,Z_0^\nabla)) -\iota_{Z_0}d\tau=-\iota_{Z_0}d\tau.\vspace{10pt}
\end{equation}

\textit{Second order:} Let $Z''_\varepsilon := Z''_0 + \varepsilon Z'''_\varepsilon$, 
$h''_\varepsilon = h''_0 + \varepsilon h'''_\varepsilon$. Evaluating ($\heartsuit$'') at $\varepsilon=0$ implies:
\[
\iota_{Z'_0} d\lambda- \iota_{Z''_0} \pi^* \beta  
= -\iota_{Z_0}d\tau+ h''_0 \tau.
\tag{$\infty''$}
\]
The vertical part gives
\[
-g(T^*_vv-JZ_0^\nabla,\cdot)=-2\beta(Z_0^\nabla,\cdot)+h''_0\beta(v,\cdot),
\]
which can be rewritten as
\[
\beta(JT^*_vv+3Z_0^\nabla,\cdot)=h_0''\beta(v,\cdot).
\]
Choosing 
\[
Z_0^\nabla=-\tfrac13 JT^*_vv,
\]
we get $h''_0=0$. Note that this is a valid choice since 
\[
g(JT^*_vv,v)=0,\qquad g(JT^*_vv,Jv)=0.
\]
The horizontal part gives:
\[
g(v,T(T^*_vv-JZ_0^\nabla,\cdot))+g((Z'_0)^\nabla,\cdot)-\beta( (Z_0'')^\pi,\cdot)=\beta(R(Jv,\cdot)v,v),
\]
which can be rewritten as
\[
g(\tfrac23T^*_{T^*_vv},\cdot)+g((Z_0')^\nabla,\cdot)-g(J (Z_0'')^\pi,\cdot)=-g(R^*_{Jv,v}Jv,\cdot),
\]
where we defined $R_{Jv,v}w=R(Jv,w)v$. It follows that
\[
(Z_0'')^\pi=-\tfrac23 JT^*_{T^*_vv}v-J(Z'_0)^\nabla-JR^*_{Jv,v}Jv.
\]
For every $(Z_0'')^\nabla\in\{v,Jv\}^\perp$, we have the solution
\[
h''_0=0,\qquad Z_0''=(-\tfrac23 JT^*_{T^*_vv}v-J(Z'_0)^\nabla-JR^*_{Jv,v}Jv)^h+((Z_0'')^\nabla)^v.
\]
Substituting back into \((\heartsuit'')\) and dividing by $\varepsilon$ yields:
\[
 \iota_{Z''_0} d\lambda + \iota_{Z'''_\varepsilon} \omega_\varepsilon
=\tfrac12\tau_\varepsilon'' + \psi_\varepsilon^*(h'''_\varepsilon \tau) + o(1)
\tag{$\heartsuit'''$} 
\]
where $\tau_\varepsilon''=\tau''_0+\varepsilon\tau_\varepsilon'''$. By Lemma \ref{lem: expansion differential form} $\tau''_0=\sL_{Z_0}\sL_{Z_0}\tau-\sL_{Z_0'}\tau$. Since $Z_0,Z_0'\in\ker\tau$ the formula reduces to
\[
\tau''_0=\iota_{Z_0}d\iota_{Z_0}d\tau-\iota_{Z_0'}d\tau.\vspace{10pt}
\]

\textit{Third order:} Let $Z'''_\varepsilon := Z'''_0$, 
$h'''_\varepsilon = h'''_0$. Evaluating ($\heartsuit'''$) at $\varepsilon=0$ implies:
\[
\iota_{Z''_0} d\lambda - \iota_{Z'''_0} \pi^* \beta 
=\tfrac12\tau''_0+ h'''_0 \tau
\tag{$\infty'''$}
\]
The vertical part of \((\infty''')\) yields:
\[
-g((Z_0'')^\pi,\cdot)=\tfrac12(d\iota_{Z_0}d\tau)(Z_0,\cdot)-\tfrac122\beta((Z_0')^\nabla,\cdot)+h_0'''g(Jv,\cdot^\nabla).
\]
Let us define a vertical vector field $W$ such that
\[
g(W^\nabla,\cdot^\nabla)=\tfrac12(d\iota_{Z_0}d\tau)(Z_0,\cdot),
\]
then the vertical part of \((\infty''')\) can be written as
\[
g(\tfrac23 JT^*_{T^*_vv}v+J(Z'_0)^\nabla+JR^*_{Jv,v}Jv,\cdot)= g(W^\nabla,\cdot)-g(J(Z_0')^\nabla,\cdot)+h_0'''g(Jv,\cdot).
\]
Plugging in $(Jv)^v$, we deduce that
\[
g(\tfrac23 JT^*_{T^*_vv}v+JR^*_{Jv,v}Jv,Jv)=g(W^\nabla,Jv)+h_0''',
\]
where we used that $(Z_0')^\nabla$ is orthogonal to $v$. The first piece is
\begin{equation}
\tfrac{2}{3}(T^*_{T^*_vv}v,v)=\tfrac23g(v,T_{T^*_vv}v)=-\tfrac23g(v,T_vT^*_vv)=-\tfrac23g(T^*_vv,T^*_vv)=-\tfrac23|T^*|^2(\mu(v)).    
\end{equation}
The second piece is
\[
g(R^*_{Jv,v}Jv,v)=g(Jv,R(Jv,v)v)=K(\mu(v)).
\]
The third piece is given by Lemma \ref{lem: Z0 identity} as
\[
g(W^\nabla,Jv)=\tfrac{1}{2}K(\mu(v))-\tfrac13|T^*|^2(\mu(v)).
\]
Plugging the three pieces in the formula, we get
\begin{equation}
h_0'''=\tfrac{1}{2}K(\mu(v))-\tfrac13|T^*|^2(\mu(v)).
\end{equation}
Restricting the vertical part of \((\infty''')\) to the orthogonal of $Jv$, we get
\[
P\Big(\tfrac23 JT^*_{T^*_vv}v+JR^*_{Jv,v}Jv-W^\nabla\Big)=-2J(Z_0')^\nabla,
\]
where $P$ is the orthogonal projection on the orthogonal to $Jv$ (and $v$). We obtain
\[
(Z_0')^\nabla=\tfrac12P\Big(\tfrac23 T^*_{T^*_vv}v+R^*_{Jv,v}Jv+JW^\nabla\Big).
\]
The horizontal part of \((\infty''')\) yields
\[
g(v,T((Z_0'')^\pi,\cdot))+g((Z_0'')^\nabla,\cdot)-\beta((Z_0''')^\pi,\cdot)=\tfrac12\tau''_0(\ \cdot^\pi).
\]
Define the horizontal vector field $V$ such that
\[
g(V^\pi,\cdot)=\tfrac12\tau''_0(\ \cdot^\pi).
\]
Then we obtain
\[
T^*_{(Z_0'')^\pi}v+(Z_0'')^\nabla-d \pi V=J (Z_0''')^\pi.
\]
This equation is solved if we put 
\[
(Z_0'')^\nabla=0,\qquad Z_0'''=(-JT^*_{(Z_0'')^h}v-J(Z_0'')^\nabla+J V^\pi)^h.
\]
All together we found as claimed:
\[
h_\varepsilon=\varepsilon+\varepsilon^3\left(\tfrac{1}{2}K(\mu(v))-\tfrac13|T^*|^2(\mu(v))\right)=\varepsilon+\varepsilon^3\frac{\hat K\circ\mu}{2}.
\]

\section{Theorem \ref{thmB}, Step 2: Analyzing the Drift}\label{sec:drift-analysis}
Note that the connection $\nabla^{g,J}$ on $TQ$ induces a connection on the principal $S^1$-bundle $\mu\colon SQ\to P_\C(TQ)$, and that $\tau$ is its connection $1$-form. In particular, $d\tau=\mu^*\delta$, where $\delta$ is the closed $2$-form on $P_\C(TQ)$ introduced in the introduction; see equation \eqref{eq:delta}. By assumption, $(SQ,\omega_\varepsilon)$ is Zoll, where $\omega_\varepsilon=\varepsilon d\lambda-\pi^*\beta$. That is, there exists a vector field spanning the kernel of $\omega_\varepsilon$ and generating a Zoll flow. By the normal form theorem, Theorem \ref{thm: magnetic normal form}, proved in the previous section, it follows that also
\[
\left(SQ,\psi_\varepsilon^*\omega_\varepsilon=-\pi^*\beta+d\big((H_\varepsilon\circ\mu)\tau\big)+o(\varepsilon^4)\right)
\]
is Zoll. Denote by $X_{H_\varepsilon}^h$ the horizontal lift of the Hamiltonian vector field $X_{H_\varepsilon}$ of $H_\varepsilon$ with respect to the symplectic form $-\pi^*\beta+H_\varepsilon\delta$ on $P_\C(TQ)$. Further note that the vertical vector field $(Jv)^v$, is the unique vector field characterized by $\tau((Jv)^v)=1$ and $(Jv)^v\in \ker(d\mu)$.

\begin{lemma}\label{kernel of normal form}
Up to order $o(\varepsilon^4)$, the vector field $(Jv)^v+X_{H_\varepsilon}^h$ spans the kernel of $\psi_\varepsilon^*\omega_\varepsilon$.
\end{lemma}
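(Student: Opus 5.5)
We verify directly that $X:=(Jv)^v+X_{H_\varepsilon}^h$ satisfies $\iota_X\Omega_\varepsilon=o(\varepsilon^4)$ for the model form $\Omega_\varepsilon:=-\pi^*\beta+d\big((H_\varepsilon\circ\mu)\tau\big)$. By Theorem \ref{thm: magnetic normal form}, $\psi_\varepsilon^*\omega_\varepsilon=\Omega_\varepsilon+o(\varepsilon^4)$, so the same vector field annihilates $\psi_\varepsilon^*\omega_\varepsilon$ up to $o(\varepsilon^4)$. Since $\psi_\varepsilon^*\omega_\varepsilon$ has one-dimensional kernel and $X$ is nowhere vanishing, this is what it means for $X$ to span the kernel up to $o(\varepsilon^4)$.

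We expand $\Omega_\varepsilon$ using $d\tau=\mu^*\delta$:
\begin{equation*}
\Omega_\varepsilon=-\pi^*\beta+\mu^*(dH_\varepsilon)\wedge\tau+(H_\varepsilon\circ\mu)\,\mu^*\delta
=\mu^*\big(-\pi^*\beta+H_\varepsilon\delta\big)+\mu^*dH_\varepsilon\wedge\tau,
\end{equation*}
where in the last expression $\pi$ also denotes the projection $P_\C(TQ)\to Q$. We write $\Omega^{P}_\varepsilon:=-\pi^*\beta+H_\varepsilon\delta$ on $P_\C(TQ)$. For small $\varepsilon$ this form is symplectic. Indeed, by \eqref{eq:delta} and $H_\varepsilon=\tfrac{\varepsilon^2}{2}+O(\varepsilon^4)$, the form is block-triangular, with blocks $-\beta+O(\varepsilon^2)$ horizontally and $\varepsilon^2\bar\beta+O(\varepsilon^4)$ vertically. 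So $X_{H_\varepsilon}$ is well defined by $\iota_{X_{H_\varepsilon}}\Omega^P_\varepsilon=-dH_\varepsilon$ (with the paper's sign convention).

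\textbf{Contraction with the vertical generator.} The field $(Jv)^v$ is the infinitesimal generator of the circle action, so $d\mu\,(Jv)^v=0$ and $\tau((Jv)^v)=1$. This gives
\[
\iota_{(Jv)^v}\Omega_\varepsilon=-\tau((Jv)^v)\,\mu^*dH_\varepsilon=-\mu^*dH_\varepsilon .
\]

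\textbf{Contraction with the horizontal lift.} The field $X^h_{H_\varepsilon}$ is the $\tau$-horizontal lift, meaning $\tau(X^h)=0$ and $d\mu\,X^h=X_{H_\varepsilon}$. This gives
\[
\iota_{X^h}\Omega_\varepsilon=\mu^*\big(\iota_{X_{H_\varepsilon}}\Omega^P_\varepsilon\big)+\big(dH_\varepsilon(X_{H_\varepsilon})\big)\tau=-\mu^*dH_\varepsilon\cdot(-1)\ \text{sign-adjusted}+0,
\]
where $dH_\varepsilon(X_{H_\varepsilon})=0$. With the convention chosen so that $\iota_{X_{H_\varepsilon}}\Omega^P_\varepsilon=dH_\varepsilon$, this term equals $+\mu^*dH_\varepsilon$. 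It therefore cancels the first contraction, and $\iota_X\Omega_\varepsilon=0$ exactly.

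\textbf{Main obstacle.} The main thing to get right is the sign and orientation bookkeeping: the paper's convention $\omega(X_H,\cdot)=-dH$, the sign of $\tau$ on the generator, and the order in $\mu^*dH_\varepsilon\wedge\tau$. If the signs do not match, one simply replaces $X_{H_\varepsilon}$ by its negative, i.e.\ takes the Hamiltonian vector field with the convention that makes the two contractions cancel.

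\textbf{Passing the error term to the kernel.} Since $X=O(1)$ in $C^k$, we have $\iota_X\big(\psi^*_\varepsilon\omega_\varepsilon\big)=o(\varepsilon^4)$. To say that the true kernel generator differs from $X$ by $o(\varepsilon^4)$, we check the following.
\begin{itemize}
\item The restriction of $\Omega_\varepsilon$ to $\ker\tau$ has inverse of size $O(\varepsilon^{-2})$ only in the vertical direction.
\item Normalizing the true generator $K$ by $\tau(K)=1$, we get $\iota_{K-X}\Omega_\varepsilon=o(\varepsilon^4)$ with $K-X\in\ker\tau$.
\item This yields a horizontal error $o(\varepsilon^4)$ and a vertical error $o(\varepsilon^2)$.
\end{itemize}
This precision is exactly what the subsequent drift analysis needs: horizontal drift at order $\varepsilon^4$ and vertical drift at order $\varepsilon^2$.
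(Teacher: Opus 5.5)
Your proposal is correct and is essentially the paper's argument. You expand $d\big((H_\varepsilon\circ\mu)\tau\big)=\mu^*dH_\varepsilon\wedge\tau+(H_\varepsilon\circ\mu)\mu^*\delta$, use $d\mu\,(Jv)^v=0$, $\tau((Jv)^v)=1$, $\tau(X^h_{H_\varepsilon})=0$, $dH_\varepsilon(X_{H_\varepsilon})=0$ and the Hamiltonian equation on $P_\C(TQ)$ to get exact cancellation for the model form, and then absorb the $o(\varepsilon^4)$ error of the normal form.

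Your sign bookkeeping is in fact more careful than the paper's. The paper writes the contraction as $+\tau((Jv)^v)\mu^*dH_\varepsilon$, so with its convention $\omega(X_H,\cdot)=-dH$ the correct generator is really $(Jv)^v-X^h_{H_\varepsilon}$; this sign is harmless for the drift argument. Your last paragraph, quantifying how the true kernel generator differs from $X$, is a refinement the paper omits. One cleanup is needed: replace the garbled ``sign-adjusted'' display by the clean formula $\iota_{X^h_{H_\varepsilon}}\Omega_\varepsilon=\mu^*\big(\iota_{X_{H_\varepsilon}}\Omega^P_\varepsilon\big)$.
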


\begin{proof}
We compute
\begin{equation*}
\begin{aligned}
\iota_{(Jv)^v+X_{H_\varepsilon}^h}\psi_\varepsilon^*\omega_\varepsilon &=\mu^*(-\pi^*\beta+H_\varepsilon\delta)(X_{H_\varepsilon}^h,\cdot)\\
&+d(H_\varepsilon\circ\mu)(X_{H_\varepsilon}^h)\tau(\cdot)+\tau((Jv)^v)\mu^*dH_\varepsilon(\cdot)+o(\varepsilon^4).
\end{aligned}
\end{equation*}
Using that $\tau((Jv)^v)=1$ and $d(H_\varepsilon\circ\mu)(X_{H_\varepsilon}^h)=0$, since $X_{H_\varepsilon}^h$ projects to the Hamiltonian vector field of $H_\varepsilon$, this reduces to
\[
\iota_{(Jv)^v+X_{H_\varepsilon}^h}\psi_\varepsilon^*\omega_\varepsilon=\mu^*(-\pi^*\beta+H_\varepsilon\delta)(X_{H_\varepsilon}^h,\cdot)+\mu^*dH_\varepsilon(\cdot)+o(\varepsilon^4).
\]
This vanishes up to order $o(\varepsilon^4)$, again because $X_{H_\varepsilon}^h$ projects to the Hamiltonian vector field of $H_\varepsilon$ with respect to the symplectic form $-\pi^*\beta+H_\varepsilon\delta$.
\end{proof}

We now describe the leading-order horizontal and vertical components of $X_{H_\varepsilon}$ with respect to the splitting of $T(P_\mathbb C(TQ))$ induced by $\nabla^{g,J}$. We claim that
\begin{equation}
 X_{H_\varepsilon}^\pi=\varepsilon^4Y+o(\varepsilon^4),\qquad X_{H_\varepsilon}^\nabla=\varepsilon^2Z+o(\varepsilon^2),
\end{equation}
where $Y$ and $Z$ are determined by
\begin{equation}
\beta(Y,\cdot)=-\frac{1}{8}d^h\hat K,\qquad \bar\beta(Z,\cdot)=\frac{1}{8}d^v\hat K.
\end{equation}
Indeed, $X_{H_\varepsilon}$ is defined by the Hamiltonian equation
\[
-\pi^*\beta(X_{H_\varepsilon},\cdot)+H_\varepsilon\delta(X_{H_\varepsilon},\cdot)=-dH_\varepsilon=-\frac{\varepsilon^4}{8}d\hat K.
\]
We now evaluate this equation separately on vertical and horizontal distribution. Since $H_\varepsilon=\frac{\varepsilon^2}{2}+O(\varepsilon^4)$ and the leading-order part of $\delta$ is purely vertical and equal to $\bar\beta$ (see eq. \ref{eq:delta}), the vertical component gives
\[
\frac{\varepsilon^2}{2}\beta(X_{H_\varepsilon}^\nabla,\cdot)=-\frac{\varepsilon^4}{8}d^v\hat K+o(\varepsilon^4).
\]
This implies
\[
X_{H_\varepsilon}^\nabla=\varepsilon^2 Z+o(\varepsilon^2),
\]
where $\beta(Z,\cdot)=\frac{1}{8}d^v\hat K$. Similarly, evaluating on horizontal vectors, the term involving $\delta$ is of higher order, so we obtain
\[
-\beta( X_{H_\varepsilon}^\pi,\cdot)=-\frac{\varepsilon^4}{8}d^h\hat K+o(\varepsilon^4).
\]
Hence
\[
X_{H_\varepsilon}^\pi=\varepsilon^4Y+o(\varepsilon^4),
\]
where $\beta(Y,\cdot)=-\frac{1}{8}d^h\hat K$. This proves the claimed decomposition.

The projected dynamics therefore exhibits two distinct drifts: a vertical drift of order $\varepsilon^2$ along the fibers of $P_{\mathbb C}(TQ)\to Q$, and a horizontal drift of order $\varepsilon^4$ along the base $Q$. As in Section~\ref{sec:conformally-compatible}, the strategy is to rule out Zollness by showing that any non-trivial projected drift forces the period $T_\varepsilon$ to diverge, contradicting the existence of the finite limit period $T_*$ established in Step~1.

Assume first that $d^v\hat K(\xi_0)\neq 0$ for some $\xi_0\in P_{\mathbb C}(TQ)$. Then $Z(\xi_0)\neq 0$. Since
\[
X^\nabla_{H_\varepsilon}=\varepsilon^2 Z+o(\varepsilon^2),
\]
the same flow-box argument as in Section~\ref{sec:conformally-compatible} shows that the projected orbit through $\xi_0$ remains embedded for times of order $\varepsilon^{-2}$. In particular, there exists a constant $a>0$ such that
\[
T_\varepsilon\ge a\varepsilon^{-2}
\]
for all sufficiently small $\varepsilon$. As $\varepsilon\to 0$, this contradicts the existence of the finite limit period $T_*$. Hence $d^v\hat K=0$.

Once $d^v\hat K=0$, the leading projected drift is horizontal and equals $\varepsilon^4 Y$. If $d^h\hat K\neq 0$ at some point, the same argument applied in the horizontal direction yields a constant $a'>0$ such that
\[
T_\varepsilon\ge a'\varepsilon^{-4}
\]
for all sufficiently small $\varepsilon$, again contradicting the existence of $T_*$. Therefore $d^h\hat K=0$ as well.

We conclude that both $d^v\hat K$ and $d^h\hat K$ vanish, and hence $\hat K$ is constant. This proves \eqref{curvature torsion constant}. If $J$ is integrable, then $N^J=0$, so $K^{g,J}$ is constant. It follows that $(Q,g,J)$ is a complex space form. In this case, as recalled in the Introduction, the magnetic flow is Zoll for all sufficiently small energy values. This completes the proof.

\bibliography{refs}

@article{B24,
  title={On symplectic geometry of tangent bundles of {H}ermitian symmetric spaces},
  author={Bimmermann, Johanna},
  journal={arXiv preprint arXiv:2406.16440},
  year={2024}
}

@article{Bott,
 author = {Bott, Raoul},
 title = {Nondegenerate critical manifolds},
 fjournal = {Annals of Mathematics. Second Series},
 journal = {Ann. Math. (2)},
 issn = {0003-486X},
 volume = {60},
 pages = {248--261},
 year = {1954},
 language = {English},
 doi = {10.2307/1969631},
 zbMATH = {3092823},
 Zbl = {0058.09101}
}

@article{BH,
 author = {Banyaga, Augustin and Hurtubise, David E.},
 title = {A proof of the {Morse}-{Bott} lemma.},
 fjournal = {Expositiones Mathematicae},
 journal = {Expo. Math.},
 issn = {0723-0869},
 volume = {22},
 number = {4},
 pages = {365--373},
 year = {2004},
 language = {English},
 doi = {10.1016/S0723-0869(04)80014-8},
 zbMATH = {2222520},
 Zbl = {1078.57031}
}

@article{CMP,
 author = {Contreras, Gonzalo and Macarini, Leonardo and Paternain, Gabriel P.},
 title = {Periodic orbits for exact magnetic flows on surfaces},
 fjournal = {IMRN. International Mathematics Research Notices},
 journal = {Int. Math. Res. Not.},
 issn = {1073-7928},
 volume = {2004},
 number = {8},
 pages = {361--387},
 year = {2004},
 language = {English},
 doi = {10.1155/S1073792804205050},
 zbMATH = {2207647},
 Zbl = {1086.37032}
}

@article{Schneider,
 author = {Schneider, Matthias},
 title = {Closed magnetic geodesics on {{\(S^2\)}}},
 fjournal = {Journal of Differential Geometry},
 journal = {J. Differ. Geom.},
 issn = {0022-040X},
 volume = {87},
 number = {2},
 pages = {343--388},
 year = {2011},
 language = {English},
 doi = {10.4310/jdg/1304514976},
 zbMATH = {5917819},
 Zbl = {1232.53006}
}

@article{CZ,
 author = {Cheng, Da Rong and Zhou, Xin},
 title = {Existence of curves with constant geodesic curvature in a {Riemannian} 2-sphere},
 fjournal = {Transactions of the American Mathematical Society},
 journal = {Trans. Am. Math. Soc.},
 issn = {0002-9947},
 volume = {374},
 number = {12},
 pages = {9007--9028},
 year = {2021},
 language = {English},
 doi = {10.1090/tran/8510},
 zbMATH = {7618823},
 Zbl = {1512.58008}
}

@article{AB,
 author = {Asselle, Luca and Benedetti, Gabriele},
 title = {The {Lusternik}-{Fet} theorem for autonomous {Tonelli} {Hamiltonian} systems on twisted cotangent bundles},
 fjournal = {Journal of Topology and Analysis},
 journal = {J. Topol. Anal.},
 issn = {1793-5253},
 volume = {8},
 number = {3},
 pages = {545--570},
 year = {2016},
 language = {English},
 doi = {10.1142/S1793525316500205},
 zbMATH = {6603943},
 Zbl = {1345.37059}
}

@article{NT,
 author = {Novikov, Serge\u{\i} P. and Ta{\u{\i}}manov, Iskander A.},
 title = {Periodic extremals of many-valued or not-everywhere-positive functionals},
 fjournal = {Soviet Mathematics. Doklady},
 journal = {Sov. Math., Dokl.},
 issn = {0197-6788},
 volume = {29},
 pages = {18--20},
 year = {1984},
 language = {English},
 zbMATH = {3938075},
 Zbl = {0585.58009}
}

@article{Merry0,
 author = {Merry, Will J.},
 title = {Closed orbits of a charge in a weakly exact magnetic field},
 fjournal = {Pacific Journal of Mathematics},
 journal = {Pac. J. Math.},
 issn = {1945-5844},
 volume = {247},
 number = {1},
 pages = {189--212},
 year = {2010},
 language = {English},
 doi = {10.2140/pjm.2010.247.189},
 zbMATH = {5809035},
 Zbl = {1246.37082}
}

@article{Merry1,
 author = {Merry, Will J.},
 title = {On the {Rabinowitz} {Floer} homology of twisted cotangent bundles},
 fjournal = {Calculus of Variations and Partial Differential Equations},
 journal = {Calc. Var. Partial Differ. Equ.},
 issn = {0944-2669},
 volume = {42},
 number = {3-4},
 pages = {355--404},
 year = {2011},
 language = {English},
 doi = {10.1007/s00526-011-0391-1},
 zbMATH = {5968978},
 Zbl = {1239.53111}
}

@article{Merry2,
 author = {Merry, Will J.},
 title = {Correction to ``{Closed} orbits of a charge in a weakly exact magnetic field''},
 fjournal = {Pacific Journal of Mathematics},
 journal = {Pac. J. Math.},
 issn = {1945-5844},
 volume = {280},
 number = {1},
 pages = {255--256},
 year = {2016},
 language = {English},
 doi = {10.2140/pjm.2016.280.255},
 zbMATH = {6528217},
 Zbl = {1332.37046}
}

@article{MG,
 author = {Groman, Yoel and Merry, Will J.},
 title = {The symplectic cohomology of magnetic cotangent bundles},
 fjournal = {Commentarii Mathematici Helvetici},
 journal = {Comment. Math. Helv.},
 issn = {0010-2571},
 volume = {98},
 number = {2},
 pages = {365--424},
 year = {2023},
 language = {English},
 doi = {10.4171/CMH/555},
 zbMATH = {7755954},
 Zbl = {1529.53079}
}

@article{Schlenk,
 author = {Schlenk, Felix},
 title = {Applications of {Hofer}'s geometry to {Hamiltonian} dynamics},
 fjournal = {Commentarii Mathematici Helvetici},
 journal = {Comment. Math. Helv.},
 issn = {0010-2571},
 volume = {81},
 number = {1},
 pages = {105--121},
 year = {2006},
 language = {English},
 doi = {10.4171/CMH/45},
 zbMATH = {5033715},
 Zbl = {1094.37031}
}

@article{FF,
 author = {Frauenfelder, Urs and Schlenk, Felix},
 title = {Hamiltonian dynamics on convex symplectic manifolds},
 fjournal = {Israel Journal of Mathematics},
 journal = {Isr. J. Math.},
 issn = {0021-2172},
 volume = {159},
 pages = {1--56},
 year = {2007},
 language = {English},
 doi = {10.1007/s11856-007-0037-3},
 url = {opus.bibliothek.uni-augsburg.de/opus4/files/56443/0303282v1.pdf},
 zbMATH = {5186271},
 Zbl = {1126.53056}
}

@article{Gin87,
 author = {Ginzburg, Viktor L.},
 title = {New generalizations of {Poincar{\'e}}'s geometric theorem},
 fjournal = {Functional Analysis and its Applications},
 journal = {Funct. Anal. Appl.},
 issn = {0016-2663},
 volume = {21},
 number = {1-3},
 pages = {100--106},
 year = {1987},
 language = {English},
 doi = {10.1007/BF01078023},
 zbMATH = {4071972},
 Zbl = {0656.58027}
}

@article{GK2,
 author = {Ginzburg, Viktor L. and Kerman, Ely},
 title = {Periodic orbits of {Hamiltonian} flows near symplectic extrema.},
 fjournal = {Pacific Journal of Mathematics},
 journal = {Pac. J. Math.},
 issn = {1945-5844},
 volume = {206},
 number = {1},
 pages = {69--91},
 year = {2002},
 language = {English},
 doi = {10.2140/pjm.2002.206.69},
 zbMATH = {1868174},
 Zbl = {1055.37065}
}

@incollection{GK,
 author = {Ginzburg, Viktor L. and Kerman, Ely},
 title = {Periodic orbits in magnetic fields in dimensions greater than two},
 booktitle = {Geometry and topology in dynamics. AMS special session on topology in dynamics, Winston-Salem, NC, USA, October 9--10, 1998 and the AMS-AWM special session on geometry in dynamics, San Antonio, TX, USA, January 13--16, 1999},
 isbn = {0-8218-1958-5},
 pages = {113--121},
 year = {1999},
 publisher = {Providence, RI: American Mathematical Society},
 language = {English},
 zbMATH = {1408395},
 Zbl = {0948.37045}
}

@article{CGK,
 author = {Cieliebak, Kai and Ginzburg, Viktor L. and Kerman, Ely},
 title = {Symplectic homology and periodic orbits near symplectic submanifolds},
 fjournal = {Commentarii Mathematici Helvetici},
 journal = {Comment. Math. Helv.},
 issn = {0010-2571},
 volume = {79},
 number = {3},
 pages = {554--581},
 year = {2004},
 language = {English},
 doi = {10.1007/s00014-004-0814-0},
 zbMATH = {2113905},
 Zbl = {1073.53118}
}

@article{Arn,
 author = {Arnol'd, Vladimir I.},
 title = {Some remarks on flows of line elements and frames},
 fjournal = {Soviet Mathematics. Doklady},
 journal = {Sov. Math., Dokl.},
 issn = {0197-6788},
 volume = {2},
 pages = {562--564},
 year = {1961},
 language = {English},
 zbMATH = {3201217},
 Zbl = {0124.14601}
}

@book{McDuff_Salamon_jhol,
  author    = {McDuff, Dusa and Salamon, Dietmar},
  title     = {J-Holomorphic Curves and Symplectic Topology},
  edition   = {2},
  series    = {American Mathematical Society Colloquium Publications},
  volume    = {52},
  publisher = {American Mathematical Society},
  address   = {Providence, RI},
  year      = {2012},
  isbn      = {978-0-8218-8746-2},
}

@article{Dom,
 author = {Dombrowski, Peter},
 title = {On the geometry of the tangent bundle},
 fjournal = {Journal f{\"u}r die Reine und Angewandte Mathematik},
 journal = {J. Reine Angew. Math.},
 issn = {0075-4102},
 volume = {210},
 pages = {73--88},
 year = {1962},
 language = {English},
 doi = {10.1515/crll.1962.210.73},
 url = {https://eudml.org/doc/150533},
 zbMATH = {3171620},
 Zbl = {0105.16002}
}

@article{AB_normal_forms_surfaces,
  author  = {Asselle, Luca and Benedetti, Gabriele},
  title   = {Normal forms for strong magnetic systems on surfaces: trapping regions and rigidity of {Z}oll systems},
  journal = {Ergodic Theory Dynam. Systems},
  volume  = {42},
  year    = {2022},
  pages   = {1871--1897},
  doi     = {10.1017/etds.2021.11},
}

@article{NN57,
  author    = {Newlander, August and Nirenberg, Louis},
  title     = {Complex analytic coordinates in almost complex manifolds},
  journal   = {Annals of Mathematics. Second Series},
  volume    = {65},
  number    = {3},
  year      = {1957},
  pages     = {391--404},
  issn      = {0003-486X},
  doi       = {10.2307/1970050},
  mrnumber  = {0088770},
}

@book{KN69,
  author = {Kobayashi, Shoshichi and Nomizu, Katsumi},
  title = {Foundations of Differential Geometry. Vol. II},
  series = {Interscience Tracts in Pure and Applied Mathematics},
  volume = {15},
  year = {1969},
  publisher = {Interscience Publishers},
  address = {New York}
}

@incollection{Don00,
 author = {Donaldson, Simon K.},
 title = {The {Seiberg}-{Witten} equations and almost-{Hermitian} geometry},
 booktitle = {Global differential geometry: the mathematical legacy of Alfred Gray. Proceedings of the international congress on differential geometry held in memory of Professor Alfred Gray, Bilbao, Spain, September 18--23, 2000},
 isbn = {0-8218-2750-2},
 pages = {32--38},
 year = {2001},
 publisher = {Providence, RI: American Mathematical Society (AMS)},
 language = {English},
 zbMATH = {1749238},
 Zbl = {1004.53016}
}

@article{Marle,
 author = {Marle, Charles-Michel},
 title = {Mod{\`e}le d'action hamiltonienne d'un groupe de {Lie} sur une vari{\'e}t{\'e} symplectique. ({Model} of {Hamiltonian} action of a {Lie} group on a symplectic manifold)},
 fjournal = {Rendiconti del Seminario Matematico},
 journal = {Rend. Semin. Mat., Torino},
 issn = {0373-1243},
 volume = {43},
 pages = {227--251},
 year = {1985},
 language = {French},
 zbMATH = {3967352},
 Zbl = {0599.53032}
}

@book{Audin,
 author = {Audin, Mich{\`e}le},
 title = {Torus actions on symplectic manifolds},
 edition = {2nd revised ed.},
 fseries = {Progress in Mathematics},
 series = {Prog. Math.},
 issn = {0743-1643},
 volume = {93},
 isbn = {3-7643-2176-8},
 year = {2004},
 publisher = {Basel: Birkh{\"a}user},
 language = {English},
 zbMATH = {2118464},
 Zbl = {1062.57040}
}

@article {usher,
    AUTHOR = {Usher, Michael},
     TITLE = {Floer homology in disk bundles and symplectically twisted
              geodesic flows},
   JOURNAL = {J. Mod. Dyn.},
  FJOURNAL = {Journal of Modern Dynamics},
    VOLUME = {3},
      YEAR = {2009},
    NUMBER = {1},
     PAGES = {61--101},
      ISSN = {1930-5311,1930-532X},
   MRCLASS = {53D40 (53D25)},
  MRNUMBER = {2481333},
MRREVIEWER = {Tobias\ Ekholm},
       DOI = {10.3934/jmd.2009.3.61},
       URL = {https://doi.org/10.3934/jmd.2009.3.61},
}

@incollection{Gin96,
 author = {Ginzburg, Viktor L.},
 title = {On closed trajectories of a charge in a magnetic field. {An} application of symplectic geometry},
 booktitle = {Contact and symplectic geometry},
 isbn = {0-521-57086-7},
 pages = {131--148},
 year = {1996},
 publisher = {Cambridge: Cambridge University Press},
 language = {English},
 zbMATH = {956493},
 Zbl = {0873.58034}
}

@article{Ginzburg_gurel,
 author = {Ginzburg, Viktor L. and G{\"u}rel, Ba{\c{s}}ak Z.},
 title = {Periodic orbits of twisted geodesic flows and the {Weinstein}-{Moser} theorem},
 fjournal = {Commentarii Mathematici Helvetici},
 journal = {Comment. Math. Helv.},
 issn = {0010-2571},
 volume = {84},
 number = {4},
 pages = {865--907},
 year = {2009},
 language = {English},
 doi = {10.4171/CMH/184},
 url = {www.ems-ph.org/journals/show_pdf.php?issn=0010-2571&vol=84&iss=4&rank=7},
 zbMATH = {5609488},
 Zbl = {1184.37046}
}

@article {Weinstein_Moser_W,
    AUTHOR = {Weinstein, Alan},
     TITLE = {Normal modes for nonlinear {H}amiltonian systems},
   JOURNAL = {Invent. Math.},
  FJOURNAL = {Inventiones Mathematicae},
    VOLUME = {20},
      YEAR = {1973},
     PAGES = {47--57},
      ISSN = {0020-9910,1432-1297},
   MRCLASS = {34C35 (58F05)},
  MRNUMBER = {328222},
MRREVIEWER = {Clark\ Robinson},
       DOI = {10.1007/BF01405263},
       URL = {https://doi.org/10.1007/BF01405263},
}

@article {Weinstein_Moser_Mo,
    AUTHOR = {Moser, Jürgen},
     TITLE = {Periodic orbits near an equilibrium and a theorem by {A}lan
              {W}einstein},
   JOURNAL = {Comm. Pure Appl. Math.},
  FJOURNAL = {Communications on Pure and Applied Mathematics},
    VOLUME = {29},
      YEAR = {1976},
    NUMBER = {6},
     PAGES = {724--747},
      ISSN = {0010-3640,1097-0312},
   MRCLASS = {58F05 (34C30)},
  MRNUMBER = {426052},
MRREVIEWER = {Alan\ Weinstein},
       DOI = {10.1002/cpa.3160290613},
       URL = {https://doi.org/10.1002/cpa.3160290613},
}

@article {Bottkol,
    AUTHOR = {Bottkol, Matthew},
     TITLE = {Bifurcation of periodic orbits on manifolds and {H}amiltonian
              systems},
   JOURNAL = {J. Differential Equations},
  FJOURNAL = {Journal of Differential Equations},
    VOLUME = {37},
      YEAR = {1980},
    NUMBER = {1},
     PAGES = {12--22},
      ISSN = {0022-0396,1090-2732},
   MRCLASS = {58F22 (58F05 58F14 70F07)},
  MRNUMBER = {583335},
MRREVIEWER = {Mark\ Levi},
       DOI = {10.1016/0022-0396(80)90084-4},
       URL = {https://doi.org/10.1016/0022-0396(80)90084-4},
}

@article {Kerman,
    AUTHOR = {Kerman, Ely},
     TITLE = {Periodic orbits of {H}amiltonian flows near symplectic
              critical submanifolds},
   JOURNAL = {Internat. Math. Res. Notices},
  FJOURNAL = {International Mathematics Research Notices},
      YEAR = {1999},
    NUMBER = {17},
     PAGES = {953--969},
      ISSN = {1073-7928,1687-0247},
   MRCLASS = {37J45 (53D05 70H12)},
  MRNUMBER = {1717637},
MRREVIEWER = {Karl\ Friedrich\ Siburg},
       DOI = {10.1155/S1073792899000501},
       URL = {https://doi.org/10.1155/S1073792899000501},
}

@book {McDuff_Salamon_intro,
    AUTHOR = {McDuff, Dusa and Salamon, Dietmar},
     TITLE = {Introduction to symplectic topology},
    SERIES = {Oxford Mathematical Monographs},
   EDITION = {Second},
 PUBLISHER = {The Clarendon Press, Oxford University Press, New York},
      YEAR = {1998},
     PAGES = {x+486},
      ISBN = {0-19-850451-9},
   MRCLASS = {53D35 (53D40 57R17 57R57 57R58)},
  MRNUMBER = {1698616},
MRREVIEWER = {Hansj\"org\ Geiges},
}

@article {Weinstein_nbd_thm,
    AUTHOR = {Weinstein, Alan},
     TITLE = {Symplectic manifolds and their {L}agrangian submanifolds},
   JOURNAL = {Advances in Math.},
  FJOURNAL = {Advances in Mathematics},
    VOLUME = {6},
      YEAR = {1971},
     PAGES = {329--346},
      ISSN = {0001-8708},
   MRCLASS = {57.50},
  MRNUMBER = {286137},
MRREVIEWER = {D.\ G.\ Ebin},
       DOI = {10.1016/0001-8708(71)90020-X},
       URL = {https://doi.org/10.1016/0001-8708(71)90020-X},
}

@article {AB_21,
    AUTHOR = {Abbondandolo, Alberto and Benedetti, Gabriele},
     TITLE = {On the local systolic optimality of {Z}oll contact forms},
   JOURNAL = {Geom. Funct. Anal.},
  FJOURNAL = {Geometric and Functional Analysis},
    VOLUME = {33},
      YEAR = {2023},
    NUMBER = {2},
     PAGES = {299--363},
      ISSN = {1016-443X,1420-8970},
   MRCLASS = {53C60 (53C22 53D10)},
  MRNUMBER = {4578460},
MRREVIEWER = {Louis\ Merlin},
       DOI = {10.1007/s00039-023-00624-z},
       URL = {https://doi.org/10.1007/s00039-023-00624-z},
}

@misc{sanjay2025c2localsystolicoptimalityzoll,
      title={On the {$C^2$}-local systolic optimality of {Z}oll odd-symplectic forms}, 
      author={Samanyu Sanjay},
      year={2025},
      eprint={2512.01937},
      archivePrefix={arXiv},
      primaryClass={math.SG},
      url={https://arxiv.org/abs/2512.01937}, 
}

@book {Hofer_Zehnder_book,
    AUTHOR = {Hofer, Helmut and Zehnder, Eduard},
     TITLE = {Symplectic invariants and {H}amiltonian dynamics},
    SERIES = {Birkh\"auser Advanced Texts: Basler Lehrb\"ucher.
              [Birkh\"auser Advanced Texts: Basel Textbooks]},
 PUBLISHER = {Birkh\"auser Verlag, Basel},
      YEAR = {1994},
     PAGES = {xiv+341},
      ISBN = {3-7643-5066-0},
   MRCLASS = {58-02 (34C25 57R15 58E05 58F05 70H05)},
  MRNUMBER = {1306732},
MRREVIEWER = {Daniel\ M.\ Burns, Jr.},
       DOI = {10.1007/978-3-0348-8540-9},
       URL = {https://doi.org/10.1007/978-3-0348-8540-9},
}
\bibliographystyle{alpha}
\end{document}